\theoremstyle{plain}
\newtheorem{statement}[theorem]{Statement}
\newcommand{\R}{{\mathbb R}}
\newcommand{\Z}{{\mathbb Z}}
\newcommand{\N}{{\mathbb N}}
\newcommand{\E}{\mathbb E}
\newcommand{\Prob}{\mathbb{P}}
\newcommand{\CA}{{\mathcal A}}
\newcommand{\CB}{{\mathcal B}}
\newcommand{\CC}{{\mathcal C}}
\newcommand{\CD}{{\mathcal D}}
\newcommand{\CE}{{\mathcal E}}
\newcommand{\CF}{{\mathcal F}}
\newcommand{\CG}{{\mathcal G}}
\newcommand{\CH}{{\mathcal H}}
\newcommand{\CL}{{\mathcal L}}
\newcommand{\CM}{{\mathcal M}}
\newcommand{\CS}{{\mathcal S}}
\newcommand{\CT}{{\mathcal T}}
\newcommand{\CZ}{{\mathcal Z}}
\newcommand{\re}{{\mathrm e}}
\newcommand{\ind}[1]{\mathbbm{1}_{\{#1\}}}
\newcommand{\rd}{\mathrm{d}}
\newcommand{\sss}[1]{{\scriptscriptstyle #1}}
\begin{document}


\section{Introduction}
For nearest-neighbor Bernoulli percolation on $\Z^d$ \cite{broadbent1957percolation} it is well known \cite{aizenman1980lower, alexander1990wulff,cerf2000large, grimmett1990supercritical, kesten1990bondpercolation, kunz1978essential} that in the supercritical case, the distribution of the number of vertices in the cluster containing the origin follows subexponential decay. Let us write $|\CC(0)|$ for the number of vertices in the cluster containing the origin, and assume $p > p_c(\Z^d)$. Then it holds that
\begin{equation}\label{eq:intro-basic}
 \Prob(k\le |\CC(0)|< \infty)      =
 \exp\big(-\Theta(k^{\frac{d-1}{d}})\big).
\end{equation}
The decay rate in \eqref{eq:intro-basic} -- stretched exponential decay with exponent $(d-1)/d$ -- can be intuitively explained as follows: a cluster $\CC$ with at least $k$ vertices has at least $\Theta(k^{(d-1)/d})$ edges on its (outer) boundary. All these edges need to be absent. In other words, the tail decay in \eqref{eq:intro-basic} is driven by \emph{surface tension}.
Recently, this result was extended to supercritical Bernoulli percolation on certain classes of transitive graphs \cite{contreras2021supercritical,hutchcroft2022transience}.
Related works are also \cite{lichev2022bernoulli,penrose2022giant}, which determine the size of the second-largest component in a finite box for random geometric graphs, also known as continuum percolation, obtaining the same exponent $(d-1)/d$ for the cluster-size decay.

In our accompanying papers \cite{clusterI, clusterII}, we study the supercritical cluster-size decay in a large class of spatial random graph models where at least one of the \emph{degree distribution} and the \emph{edge-length distribution} obey \emph{heavy tails}: long-range percolation \cite{longrangeUnique1987,schulman_1983}, scale-free percolation on $\Z^d$ and in the continuum \cite{DeiHofHoo13, DepWut18};  geometric inhomogeneous random graphs \cite{BriKeuLen19}, hyperbolic random graphs \cite{krioukov2010hyperbolic}, the ultra-small scale-free geometric network \cite{yukich_2016}; the scale-free Gilbert model \cite{hirsch2017gilbertgraph}, the Poisson Boolean model with random radii \cite{gouere2008subcritical}, the age- and the weight-dependent random connection models \cite{gracar2019age, GraHeyMonMor19}.

In these models the tail in \eqref{eq:intro-basic} stays still stretched exponential, but is at least as light as the right-hand-side of \eqref{eq:intro-basic}. The new exponent -- say $\zeta$ -- is at least $(d-1)/d$, with its formula depending on the model parameters. Generally speaking, the accompanying papers \cite{clusterI, clusterII} treat cluster-sizes whenever the decay is \emph{strictly} lighter than the right-hand side of \eqref{eq:intro-basic}. There, we leave the part of the phase diagram \emph{open} where the model parameters are such that the conjectured exponent in \eqref{eq:intro-basic} stays $(d-1)/d$ and the tail decay is driven by surface tension as in nearest-neighbor percolation.

 In particular, the paper \cite{clusterII} leaves open this region for long-range percolation (LRP) \cite{longrangeUnique1987,schulman_1983} (see Theorem \ref{thm:complementary} below for the result). This missing region for LRP is the main focus in this paper. We write $x\wedge y:=\min(x,y)$. 
 \begin{figure}
     \centering
     \includegraphics[width=0.45\textwidth]{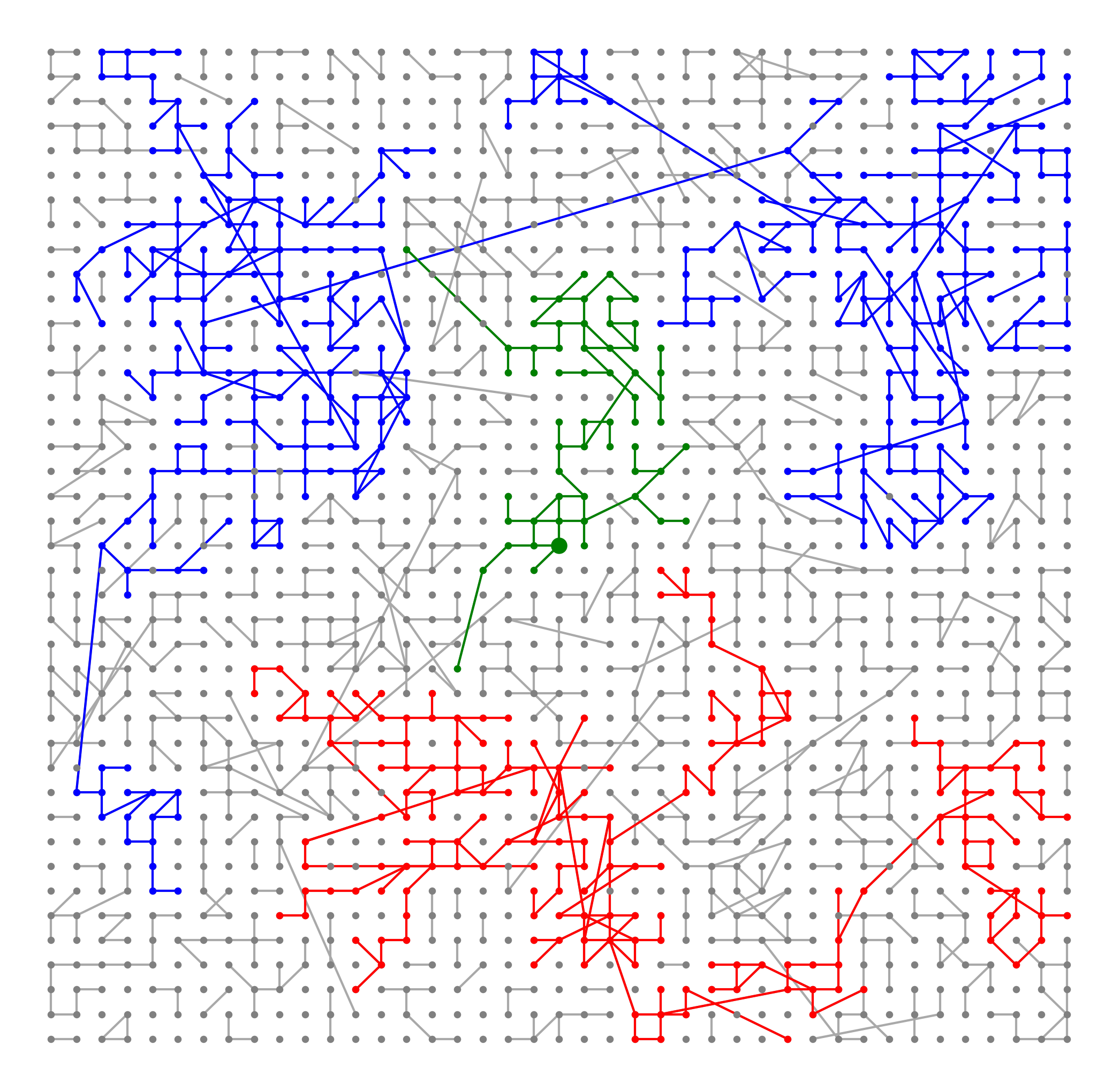}
     \caption{Simulation of long-range percolation in dimension $2$ restricted to a finite box. The largest connected component $\CC_n^\sss{(1)}$ is colored blue, the second-largest connected component $\CC_n^\sss{(2)}$ red, and the component containing the origin $\CC(0)$ green.}
     \label{fig:lrp}
 \end{figure} 
 \begin{definition}[Long-range percolation (LRP)]\label{def:lrp}
 \it
 Fix constants $d\in\mathbb{N}, \alpha>1$, $p\in(0,1]$, and $\beta>0$.
 We consider the random graph
 $\CG_\infty=(V(\CG_\infty), E(\CG_\infty))$ with $V(\CG_\infty)=\Z^d$ such that each edge $\{x, y\}$ is included in $E(\CG_\infty)$, independently of all the other edges, with probability
\begin{equation}\label{eq:connection-prob-gen}
 \mathrm{p}\big(\|x-y\|\big):=
  p\cdot \bigg(1 \wedge \frac{\beta}{\|x-y\|}\bigg)^{d \alpha},
  \end{equation}
 where $\|\cdot\|:=\|\cdot\|_2$ denotes the (Euclidean) $2$-norm.
 We set $\Lambda_n:=\Z^d\cap[-n^{-1/d}/2, n^{1/d}/2)^d$ and $E_n:=\{\{x,y\}\in E_\infty: \{x,y\}\subseteq \Lambda_n\}$, and write $\CG_n:=(\Lambda_n, E_n)$ for the induced subgraph of $\CG_\infty$ on $\Lambda_n$.
 We write $\CC(0)$ and $\CC_n(0)$ for the connected component containing the origin in $\CG_\infty$ and $\CG_n$, respectively.
\end{definition}
\noindent
Each edge with length at most $\beta$ is present with probability $p$, resembling spread-out percolation in this length-range. 
The long-range nature is apparent beyond radius $\beta$. Our parametrization with $d\alpha$ as the power of the polynomial decay matches the notation of the accompanying paper~\cite{clusterI}. Other common parametrizations of this power are $s$~\cite{biskup2004scaling, coppersmith2002diameter, crawford2012simple} and $d+\alpha$~\cite{hutchcroft2021power}.

Throughout the paper we will assume high edge density (that we make precise below). High edge density implies that the graph is supercritical, and
$$
 \Prob\big(0\leftrightarrow\infty\big)>0
$$
holds. Further, the infinite component is almost surely unique \cite{longrangeUnique1987, GanKeaNew92}.
We write $\CC_n^\sss{(i)}$ for the $i$-th largest component in $\CG_n$ with $i\in\{1,2\}$. If $\CC_n^{\sss{(1)}}$ contains all vertices of $\CG_n$ then we set $\CC_n^{\sss{(2)}}:=\emptyset$. For $n=\infty$, we write $\CC_\infty^\sss{(1)}$ for the unique infinite component in $\CG_\infty$. We refer to Figure~\ref{fig:lrp} for a visualization.
We will generally assume that $p\wedge \beta<1$, so that not all nearest-neighbor edges are present and hence the graph is not connected almost surely. 
We state our main result.
\begin{theorem}[Second-largest component and cluster-size decay]\label{thm:longrange}
 Consider supercritical long-range percolation on $\Z^d$ for $d\ge 2$ and  $\alpha>1+1/d$, and   $p\wedge \beta<1$. If either $\beta\ge 1$ and $\beta$ sufficiently large (depending on $p, \alpha, d$), or $\beta<1$ and  $p(1\wedge \beta)^{d\alpha}$ is sufficiently close to $1$, then there exist constants $A,\delta>0$ such that for all $n$ sufficiently large,
 \begin{equation}\label{eq:main-second-largest}
  \Prob\big(\tfrac{1}{A}(\log n)^{d/(d-1)}\le |\CC_{n}^\sss{(2)}| \le A(\log n)^{d/(d-1)}\big) \ge 1-n^{-\delta}.
 \end{equation}
 Under the same assumptions, for all $k$ sufficiently large, whenever $n(\log n)^{-2d/(d-1)}\ge k$ or $n=\infty$,
 \begin{equation}\label{eq:main-cluster-decay}
  -k^{-(d-1)/d}\log\big(\Prob\big(|\CC_{n}(0)| \ge k, 0\notin\CC_n^\sss{(1)}\big)\big) \in
  [1/A, A].
 \end{equation}
 Lastly, under the same assumptions, 
 \begin{equation}
     \frac{|\CC_{n}^\sss{(1)}|}{n}\overset{\Prob}\longrightarrow \Prob\big(0\leftrightarrow\infty\big),\qquad\mbox{as }n\to\infty.\label{eq:lln}
 \end{equation}
\end{theorem}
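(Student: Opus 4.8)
The plan is to derive all three displays of Theorem~\ref{thm:longrange} from a single tail estimate together with two explicit lower-bound constructions, and then to spend the real effort on the tail estimate, which is where the statement ``large delocalized components are unlikely'' lives. The estimate I aim for is
\begin{equation}
\Prob\big(|\CC_n(v)| \ge k,\ v\notin\CC_n^\sss{(1)}\big)\ \le\ \exp\big(-c\, k^{(d-1)/d}\big),
\label{eq:plan-core}
\end{equation}
for a constant $c=c(d,\alpha,p,\beta)>0$ and all large $k$, uniformly over $v\in\Lambda_n$, over finite $n$ with $n(\log n)^{-2d/(d-1)}\ge k$, and over $n=\infty$. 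Given \eqref{eq:plan-core}, the upper-tail halves of \eqref{eq:main-cluster-decay} and \eqref{eq:main-second-largest} are immediate: the former is \eqref{eq:plan-core} at $v=0$, and since $\{|\CC_n^\sss{(2)}|\ge K\}\subseteq\bigcup_{v\in\Lambda_n}\{|\CC_n(v)|\ge K,\ v\notin\CC_n^\sss{(1)}\}$, a union bound over the $n$ vertices with $K=A(\log n)^{d/(d-1)}$ bounds $\Prob(|\CC_n^\sss{(2)}|\ge K)$ by $n^{\,1-cA^{(d-1)/d}}\to0$ once $A$ is large. The matching lower bounds use the localized construction: for a box $Q$ of side $\Theta(k^{1/d})$ about the origin, on the event that no edge of $\CG_\infty$ leaves $Q$ the configuration inside $Q$ is supercritical long-range percolation on $Q$, whose largest component has at least a constant fraction of $|Q|$ vertices with probability bounded away from $0$ (renormalization, using the density hypothesis), and that component is automatically disjoint from $\CC_n^\sss{(1)}$. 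The only place $\alpha>1+1/d$ enters here is the cost of the box event: summing $\mathrm{p}(\|x-y\|)$ over $x\in Q$, $y\notin Q$ and grouping $x$ by its distance $r$ to $Q^{\mathrm c}$ gives $\sum_{r\ge1}(\mathrm{side})^{d-1}r^{-d(\alpha-1)}=O((\mathrm{side})^{d-1})$ \emph{precisely} when $d(\alpha-1)>1$, so $\Prob(\text{no edge leaves }Q)\ge\exp(-C(\mathrm{side})^{d-1})$; with side $\Theta(k^{1/d})$ this yields the lower half of \eqref{eq:main-cluster-decay}, and applying it to $\sim n/(\log n)^{d/(d-1)}$ disjoint boxes of side $\eta(\log n)^{1/(d-1)}$, each succeeding independently with probability $\ge n^{-C\eta^{d-1}}$ (exponent $<1$ for $\eta$ small), gives the lower half of \eqref{eq:main-second-largest}. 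Finally \eqref{eq:lln} follows from \eqref{eq:plan-core}: the unique giant cluster of ``good'' blocks (below) forces $|\CC_n^\sss{(1)}|\ge c'n$ with exponentially good concentration, while $n-|\CC_n^\sss{(1)}|$ differs from $\#\{v\in\Lambda_n:|\CC_n(v)|<k\}$ by at most $O_{\Prob}(ne^{-ck^{(d-1)/d}})$ vertices by \eqref{eq:plan-core}, and $\#\{v:|\CC_n(v)|<k\}=(1+o(1))\,n\,\Prob(|\CC(0)|<k)$ in probability by a second-moment computation whose error terms are again controlled by \eqref{eq:plan-core}; sending $n\to\infty$ and then $k\to\infty$ gives $|\CC_n^\sss{(1)}|/n\to\Prob(0\leftrightarrow\infty)$.

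It remains to prove \eqref{eq:plan-core}, the heart of the theorem. Fix a large block length $L$ and tile the ambient space into boxes $\{Q_z\}_{z\in\Z^d}$ of side $L$. I would call $Q_z$ \emph{good} if a bounded family of local events holds on the $3^d$-block of boxes around $Q_z$ --- roughly: long-range percolation there has a unique ``crossing'' component, every other component inside has diameter $\le L/10$, and the short (length $\le L/10$) edges behave as in high-density spread-out percolation --- chosen so that (i) $\Prob(Q_z\text{ good})\ge1-\epsilon(L)$ with $\epsilon(L)$ as small as needed, using both that $L$ is large and that the hypothesis (large $\beta$, or $p(1\wedge\beta)^{d\alpha}$ near $1$) makes local connectivity nearly complete; (ii) goodness of non-adjacent boxes is independent; and (iii) the crossing components of $\Z^d$-adjacent good boxes are connected in $\CG_n$. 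The good boxes then dominate Bernoulli site percolation at density $1-\epsilon(L)$, so there is a unique giant good-box cluster, $\CC_n^\sss{(1)}$ contains the union of crossing components over it, and $|\CC_n^\sss{(1)}|=\Theta(n)$ with good concentration (classical at high density). Now condition on $\{|\CC_n(v)|=m\ge k,\ \CC:=\CC_n(v)\neq\CC_n^\sss{(1)}\}$. Since $\CC$ avoids every crossing component of the giant good-box cluster, I claim it is separated from that cluster by a \emph{blocking surface}: a $\star$-connected set $\Sigma$ of boxes, each either not good or met by $\CC$ only outside its crossing component, with $|\Sigma|\ge c_1 m^{(d-1)/d}/L^{d-1}$. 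The lower bound on $|\Sigma|$ is an isoperimetric input --- a connected set of $m$ vertices has boundary mass $\sum_{x\in\CC,\,y\notin\CC}\mathrm{p}(\|x-y\|)\ge c\,m^{(d-1)/d}$ (again using $d(\alpha-1)>1$; for $\alpha<1+1/d$ the exponent would instead be $2-\alpha$), and every unit of boundary mass not ``absorbed'' by a bad box is a closed edge $\CC$ must pay for. Then the number of admissible $\Sigma$ near $v$ is at most $\exp(C_2|\Sigma|)$ by the standard Peierls count, the probability that all boxes of $\Sigma$ are bad is at most $\epsilon(L)^{|\Sigma|}$ by (ii), and the further requirement that all edges crossing $\Sigma$ (long ones included) be closed only adds a decreasing factor. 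Choosing $L$ large enough that $\epsilon(L)^{c_1/L^{d-1}}e^{C_2 c_1/L^{d-1}}\le e^{-c_3}$ gives $\Prob(\cdot)\le e^{-c_3 m^{(d-1)/d}}$, and summing over $m\ge k$ yields \eqref{eq:plan-core}.

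The step I expect to be the main obstacle is the \emph{blocking-surface} claim and its interaction with long edges. In nearest-neighbor percolation a finite cluster is literally encircled by a closed contour, but in long-range percolation $\CC$ could \emph{a priori} use a few long edges to tunnel across any prescribed surface; one must therefore show that, under $\alpha>1+1/d$, the long edges emanating from a vertex set are sparse and short enough that they cannot re-establish the connection from $\CC$ to the giant good-box cluster around a surface that is genuinely macroscopic, \emph{and} set up $\Sigma$ so that the long edges crossing it are charged correctly without inflating the Peierls count. Proving this robustly --- in effect a quantitative version of ``a finite non-giant long-range cluster looks, at large scales, like a nearest-neighbor one'' --- is the crux; the isoperimetric lemma, the renormalization, and the reductions in the first paragraph are laborious but conceptually routine.
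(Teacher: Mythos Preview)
Your upper-bound strategy (renormalize into boxes of side $L$, then run a Peierls argument on a blocking surface $\Sigma$) is a different route from the paper's, and the step you single out as the obstacle is a genuine gap as you have written it. You define $\Sigma$ to contain boxes that are ``either not good or met by $\CC$ only outside its crossing component,'' yet you then bound the probability by $\epsilon(L)^{|\Sigma|}$ as if every box in $\Sigma$ were bad. In long-range percolation this discrepancy is not cosmetic: a non-giant $\CC$ of size $m$ can enter many \emph{good} boxes via long edges landing in their small sub-components, so no $\star$-connected surface of purely bad boxes need have size of order $m^{(d-1)/d}/L^{d-1}$. Your remark that ``every unit of boundary mass not absorbed by a bad box is a closed edge $\CC$ must pay for'' points at the right fix---split the cost between bad boxes and closed long edges---but to turn that into an estimate you would have to run a Peierls count that simultaneously enumerates the bad-box surface and the collection of long closed edges crossing it, and control the combinatorial factor for the latter against the product of $(1-\mathrm p(\cdot))$'s. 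That is exactly the work your sketch does not do, and it is where $\alpha>1+1/d$ must be used sharply.

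The paper sidesteps the blocking-surface problem altogether. For the upper bound on $\Prob(|\CC_n^{\sss{(2)}}|\ge k)$ it decomposes a candidate component into its $1$-connected blocks $A_1,\dots,A_b$, passes to their hole-filled closures, and separately bounds (a) the probability that the \emph{block graph} on $(\bar A_i)_i$ is connected, via a sum over labeled spanning trees, and (b) the probability that $\cup_i\bar A_i$ is isolated from its exterior boundary. For (a), an inductive leaf-removal on the spanning tree reduces to the count $|\{(x,y):x\in A,\,y\notin A,\,\|x-y\|\in(r,r+1]\}|\le C r^d|\widetilde\partial_{\mathrm{int}}A|$; multiplying by $r^{-d\alpha}$ and summing over $r$ converges precisely when $\alpha>1+1/d$, and this yields a combinatorial factor $(Cp\beta^{d\alpha})^{\sum_i m_i}$ in the total boundary $\sum_i m_i=\sum_i|\widetilde\partial_{\mathrm{int}}\bar A_i|$. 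The isolation factor from (b) is $(1-p(1\wedge\beta)^{d\alpha})^{c\beta\sum_i m_i}$, and the high-density hypothesis makes it beat (a). Finite-box boundary effects (a block with closure $>3n/4$) are handled by a separate pigeonhole argument on ``principal hole-types'' rather than by a surface bound. Your lower-bound construction and the route to \eqref{eq:lln} are essentially what the paper imports from its companion article; the divergence is entirely in how the upper tail is proved, and the paper's direct block-and-spanning-tree enumeration is what replaces the surface argument you are missing.
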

Note that \eqref{eq:main-cluster-decay} allows for $n=\infty$. In Remark~\ref{remark:general} we discuss a further generalization to more general connectivity functions $\mathrm{p}(\|x-y\|)$.
Theorem \ref{thm:longrange} complements the result of \cite{clusterII} applied to long-range percolation that we state here for completeness. 
\begin{theorem}[Complementary result for $\alpha<1+1/d$ {\cite{clusterII}}]\label{thm:complementary}
 Consider supercritical long-range percolation on $\Z^d$ for $d\ge 1$ and $\alpha<1+1/d$. There exist constants $A,\delta>0$ such that for all $n$ 
 \begin{equation}\label{eq:complem-second}
  \Prob\big(\tfrac{1}{A}(\log n)^{1/(2-\alpha)}\le |\CC_{n}^\sss{(2)}| \le A(\log n)^{1/(2-\alpha)}\big) \ge 1-n^{-\delta}.
 \end{equation}
 Moreover, for all $k$ sufficiently large, whenever $n\ge  Ak$ or $n=\infty$,
\begin{equation}\label{eq:complem-decay}
  -k^{-(2-\alpha)}\log\big(\Prob\big(|\CC_{n}(0)| \ge k, 0\notin\CC_n^\sss{(1)}\big)\big) \in
  [1/A, A].
 \end{equation}
 Lastly, under the same assumptions, 
 \begin{equation}
     \frac{|\CC_{n}^\sss{(1)}|}{n}\overset{\Prob}\longrightarrow \Prob\big(0\leftrightarrow\infty\big),\qquad\mbox{as }n\to\infty.\nonumber
 \end{equation}
\end{theorem}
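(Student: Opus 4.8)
The plan rests on a deterministic \emph{long-range isoperimetric inequality}: for $1<\alpha<1+1/d$ there is $c_{\mathrm{iso}}>0$ such that every finite $S\subseteq\Z^d$ satisfies $\Phi(S):=\sum_{x\in S}\sum_{y\in\Z^d\setminus S}\mathrm p(\|x-y\|)\ge c_{\mathrm{iso}}|S|^{2-\alpha}$. To prove it, for each $x\in S$ sum $\mathrm p(\|x-y\|)$ over dyadic annuli of radii $\asymp 2^j|S|^{1/d}$: such an annulus has $\asymp (2^j|S|^{1/d})^d$ lattice points, at most $|S|$ of them in $S$, so a positive fraction lies outside $S$ as soon as $2^{jd}\ge 2$; each contributes $\asymp (2^j|S|^{1/d})^{-d\alpha}$, and since $\alpha>1$ the geometric series over $j\ge 0$ converges to $\asymp |S|^{1-\alpha}$; summing over $x\in S$ gives $\asymp|S|^{2-\alpha}$. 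The argument is dimension-free, covering also $d=1$, which the statement allows. This is precisely where $\alpha<1+1/d$ enters: for $\alpha>1+1/d$ the short-range surface term of order $|S|^{(d-1)/d}$ dominates $|S|^{2-\alpha}$, which is why the exponent would then change to $(d-1)/d$.

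For the upper bound in \eqref{eq:complem-decay}, set $\theta:=\Prob(0\leftrightarrow\infty)$, $R:=(C_\theta k)^{1/d}$ with $C_\theta$ large, and $B_r(0):=\{x\in\Z^d:\|x\|\le r\}$, and split according to $\mathrm{diam}(\CC_n(0))$. On the localized event $\{|\CC_n(0)|\ge k,\ 0\notin\CC_n^\sss{(1)},\ \CC_n(0)\subseteq B_R(0)\}$, the cluster equals $T$, the cluster of $0$ in $\CG_n$ restricted to $B_R(0)$, which is measurable in the edges inside $B_R(0)$, and (as $0\notin\CC_n^\sss{(1)}$ and there is a unique component of size $\gg k$) $T$ has no edge to the largest component $G^-$ of $\CG_n$ restricted to $\Lambda_n\setminus B_R(0)$, which is measurable in the edges inside $\Lambda_n\setminus B_R(0)$; the edges between the two regions are conditionally fresh, so \emph{without enumerating the shape of $T$},
\[
 \Prob\big(\text{no }T\!\leftrightarrow\! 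G^-\cap(B_{2R}(0)\setminus B_R(0))\ \big|\ \cdots\big)\le\exp\Big(-|T|\,\big|G^-\cap(B_{2R}\setminus B_R)\big|\min_{x\in B_R,\,y\in B_{2R}}\mathrm p(\|x-y\|)\Big),
\]
and since the minimum is $\asymp(k^{1/d})^{-d\alpha}=k^{-\alpha}$, on the typical event $|G^-\cap(B_{2R}\setminus B_R)|\ge c_\theta R^d\asymp k$ this is at most $e^{-ck\cdot k\cdot k^{-\alpha}}=e^{-ck^{2-\alpha}}$. Taking expectations, $\Prob(\text{localized event})\le e^{-ck^{2-\alpha}}+\Prob\big(|G^-\cap(B_{2R}\setminus B_R)|<c_\theta R^d\big)$. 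In the delocalized regime, repeating this with $B_{2^{j+1}R}(0)$ on the scale $\mathrm{diam}(\CC_n(0))\in[2^jR,2^{j+1}R)$ yields separation cost $e^{-c\,2^{jd(1-\alpha)}k^{2-\alpha}}$, which is not summable in $j$; so one additionally uses that $\alpha>1$ forces a finite cluster of at least $k$ vertices reaching distance $D$ to have probability at most $D^{-c}e^{-c'k^{2-\alpha}}$ (the polynomial factor coming from the long edge or long path needed to get that far), and $\sum_j(2^jR)^{-c}<\infty$ reduces everything to the localized regime for $C_\theta$ (hence $K=C_\theta^{1/d}$) large. Summing $e^{-ck^{2-\alpha}}$ over cluster sizes $m\ge k$ only loses a polynomial factor.

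The matching lower bound in \eqref{eq:complem-decay} is a construction: with $Q:=B_{(Ck)^{1/d}}(0)$, $C$ large, the cluster of $0$ in $\CG_n$ restricted to $Q$ has at least $k$ vertices with probability bounded below (a constant fraction of $Q$ lies in its largest internal component, which contains $0$ with probability tending to $\theta$), while independently all $Q$-to-complement edges are absent with probability $\ge e^{-C'\Phi(Q)}\,\eps_0^{\#\{\text{short boundary edges}\}}\ge e^{-C''k^{2-\alpha}}$ --- here $\Phi(Q)\asymp(Ck)^{2-\alpha}$, the inequality $(d-1)/d\le 2-\alpha$ controls the short boundary edges, and $\eps_0:=1-\mathrm p(1)>0$ uses $p\wedge\beta<1$. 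On the intersection $\CC_n(0)=\CC_Q(0)$ is finite, hence not $\CC_n^\sss{(1)}$ once $n\gg k$ (or $n=\infty$). For \eqref{eq:complem-second}, the upper bound is the union bound $\Prob(\exists x:|\CC_n(x)|\ge A(\log n)^{1/(2-\alpha)},\ x\notin\CC_n^\sss{(1)})\le n\,e^{-cA^{2-\alpha}\log n}\le n^{-\delta}$ for $A$ large, together with $|\CC_n^\sss{(1)}|\sim\theta n$ whp; the lower bound is a second-moment argument on $X:=\#\{x:|\CC_n(x)|\ge\tfrac1A(\log n)^{1/(2-\alpha)},\ x\notin\CC_n^\sss{(1)}\}$, for which $\E X\ge n^{1-c/A^{2-\alpha}}\to\infty$ while far-apart $x,y$ decorrelate and close pairs number only $n\cdot\mathrm{polylog}(n)\ll(\E X)^2$, so $\Prob(X=0)\to 0$ and $X>0$ forces $|\CC_n^\sss{(2)}|\ge\tfrac1A(\log n)^{1/(2-\alpha)}$. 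Finally, for $|\CC_n^\sss{(1)}|/n\to\theta$ in probability: on the high-probability event that non-largest components have size $\le A(\log n)^{1/(2-\alpha)}$ one has $\{x\in\CC_n^\sss{(1)}\}=\{|\CC_n(x)|>A(\log n)^{1/(2-\alpha)}\}$, whose probability tends to $\Prob(|\CC_\infty(x)|=\infty)=\theta$ for central $x$, and $\sum_x\mathbbm 1\{x\in\CC_n^\sss{(1)}\}$ concentrates.

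The main obstacle is the error term $\Prob\big(|G^-\cap(B_{2R}\setminus B_R)|<c_\theta R^d\big)$: the largest component being atypically sparse in a box of volume $\asymp k$ around the origin is cheapest realized by a large finite cluster \emph{inside} that box, so bounding it by $e^{-\Omega(k^{2-\alpha})}$ is morally the very statement being proved. I would close this loop with a renormalization in the spirit of Pisztora/Antal--Pisztora --- to extract, from supercriticality at \emph{general} high edge density rather than $p$ near $1$, the required concentration of the infinite-cluster density in a box --- combined with a self-improving induction on $k$; this, together with the polynomial-in-$D$ bound on far-reaching finite clusters used in the delocalized regime, is where the real work lies.
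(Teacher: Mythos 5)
This theorem is not proved in the present paper at all: it is quoted from the companion paper \cite{clusterII}, so there is no in-paper argument to compare yours against line by line. Judged on its own terms, your outline assembles several sensible ingredients: the bound $\Phi(S)\ge c\,|S|^{2-\alpha}$ on the expected number of boundary edges (correct, and as you note it holds for all $\alpha>1$; the regime $\alpha<1+1/d$ only makes it the dominant term), the conditioning trick that avoids enumerating cluster shapes, the explicit ball construction for the lower bound in \eqref{eq:complem-decay}, and the union-bound/second-moment scheme for \eqref{eq:complem-second}. But as written it is not a proof, and the missing pieces are exactly the hard part.

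The central gap is the one you flag yourself: the error term $\Prob\big(|G^-\cap(B_{2R}\setminus B_R)|<c_\theta R^d\big)\le \exp(-\Omega(k^{2-\alpha}))$ is a local-density/large-deviation statement for the giant in a box of volume $\asymp k$, at \emph{arbitrary} supercritical parameters. Deferring it to ``Pisztora-type renormalization plus induction'' does not close the loop here: Theorem~\ref{thm:complementary} assumes only supercriticality (no large $\beta$ or $p$), it includes $d=1$, where the nearest-neighbor marginal is never supercritical, so a block renormalization cannot be seeded by short edges and must be built from the heavy-tailed long edges themselves; this is precisely the technical content that \cite{clusterII} (and the general machinery of \cite{clusterI}, e.g.\ the analogue of Proposition~\ref{prop:prerequisites-upper}) supplies, and note that even in the easier regime the present paper only obtains such statements (Lemma~\ref{lem:Coupling}) under the extra assumption that $\beta$ or $p(1\wedge\beta)^{d\alpha}$ is large. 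A second unproven input is the delocalized-regime claim that a finite cluster of at least $k$ vertices reaching distance $D$ has probability at most $D^{-c}\exp(-c'k^{2-\alpha})$: the parenthetical ``long edge or long path'' does not yield this product form (the relevant events are not independent, and a union bound over where the cluster exits reintroduces the combinatorial explosion you were trying to avoid), yet without it your dyadic sum over scales diverges. Minor but real further gaps: the decorrelation of the events $\{|\CC_n(x)|\ge \tfrac1A(\log n)^{1/(2-\alpha)},\,x\notin\CC_n^{\sss{(1)}}\}$ for distant $x,y$ in a model with polynomially decaying correlations, and the handling of the global constraint $x\notin\CC_n^{\sss{(1)}}$, are asserted rather than argued. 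In short, the architecture is plausible, but the unproven inputs coincide with the substance of the cited companion-paper proof, so the proposal should be regarded as a heuristic outline rather than an alternative proof.
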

In Theorem \ref{thm:complementary} we do not require $\beta$ or $p$ to be sufficiently large, and we also allow one-dimensional models: when $d=1$, LRP is supercritical when $\alpha\le2=1+1/d$ and when $p$, $\beta$ are sufficiently large~\cite{lrpRevisited20, schulman_1983}.
When $d=1$ and $\alpha>2$, LRP is subcritical for any $p,\beta>0$ such that $\mathrm{p}(1)<1$~\cite{schulman_1983}, so Theorems \ref{thm:longrange} and \ref{thm:complementary} together give a complete picture for the cluster-size decay for supercritical long-range percolation (under the additional assumption that $\beta$ or $p$ is sufficiently large when $\alpha>1+1/d$).
In \cite{clusterII}, we also study the phase boundary $\alpha=1+1/d$. In that case the lower bounds \eqref{eq:complem-second} and \eqref{eq:complem-decay} contain lower-order correction factors, which we conjecture to be sharp. We omit further details here.
To the extent of our knowledge, for LRP, the only related results regarding the distribution of smaller clusters in \emph{supercritical} LRP is an upper bound on the second-largest component with unidentified exponent by Crawford and Sly \cite{crawford2012simple} for $\alpha\in(1,2)$ in dimension $1$ and $\alpha\in(1, 1+2/d)$ in dimensions $2$ and higher. For \emph{(sub)critical} LRP with $\alpha\in(1,2)$, a polynomial upper bound on $\Prob(|\CC(0)|\ge n)$ is established in \cite{hutchcroft2021power}.

Before proceeding to the technical contributions, we remark that our results could be generalized to a more general class of random graph models on $\Z^d$. Theorem~\ref{thm:longrange} extends to random graph models on $\Z^d$ with independent edges for any connectivity function that has a lighter tail than $\mathrm{p}$ in Theorem~\ref{thm:longrange}, provided that the probability of `short-range' edges is still sufficiently high. For instance, our methods extend to long-range percolation models in which the connection probability decays superpolynomially, and also provide an alternative proof for the cluster size decay in spread out percolation, a special case in  \cite{contreras2021supercritical} (in spread-out percolation two vertices within distance $R$ connect independently by an edge with probability $p$. 
We refrain from proving the result in this generality, since it would require many technically involved changes in our already technical companion paper~\cite{clusterI}. We nevertheless formulate the following comment.
\begin{remark}\label{remark:general}\normalfont
Consider the percolation model on $\Z^d$ where each pair of vertices $x, y\in\Z^d$ is connected by an edge with probability $\mathrm{p}(\|x-y\|)$ for some function $\mathrm{p}:[0,\infty)\to[0,1)$, independently of other vertex pairs.
Let $J:[0,\infty)\to[0,1) $ be a function that satisfies $\sup_{r>0}J(r)<1$, and\begin{equation}
\int_{x: x\in \R^d} \|x\|J(\|x\|) \rd x<\infty.\label{eq:int-cond}
\end{equation}
Then we have the following two cases:
\begin{enumerate}
\item[1)] If the connectivity function $\mathrm{p}$ is of the form 
\begin{equation}
    \mathrm{p}(\|x\|)=J(\|x\|/\beta),\nonumber
\end{equation}
and there is an $\varepsilon>0$ such that $J(x)>\varepsilon$ whenever $x<\varepsilon$,
then \eqref{eq:main-second-largest}--\eqref{eq:lln} can be proven for all sufficiently large $\beta$ depending on $\varepsilon$. 
\item[2)] If the connectivity function is of the form 
\begin{equation}
    \mathrm{p}(\|x\|)=\begin{cases} p & \mbox{if } \|x\|=1,\\
    J(\|x\|) &\mbox{if } \|x\|>1,\end{cases}\nonumber
\end{equation}
then \eqref{eq:main-second-largest}--\eqref{eq:lln} can be proven for all $p$ sufficiently close to $1$.
\end{enumerate}
\end{remark}
The integral in the condition~\eqref{eq:int-cond} represents the order of the expected number of edges $\{x,y\}$ for which the line-segment $(x,y)$ crosses a fixed box of volume one. If this number is finite, Theorem~\ref{thm:longrange} holds in more generality. 
The connectivity function $\mathrm{p}$ from Definition~\ref{def:lrp} satisfies the integrability condition \eqref{eq:int-cond} if and only if $\alpha>1+1/d$.
We conjecture that the \emph{upper} bounds in \eqref{eq:main-second-largest}--\eqref{eq:main-cluster-decay} remain valid if~\eqref{eq:int-cond} is violated (but no longer match the lower bounds). However, this would require a different proof technique.

We state a proposition that contains the main technical contribution of this paper. Together with statements from our companion paper \cite{clusterI}, where we establish the relation between the second-largest component and the cluster-size decay for spatial random graph models more generally, this proposition will readily imply Theorem~\ref{thm:longrange}.
\begin{proposition}[Second-largest component, upper bound]\label{prop:second-largest}
 Consider supercritical long-range percolation on $\Z^d$ for $\alpha>1+1/d$, $d\ge 2$. If $\beta$ in \eqref{eq:connection-prob-gen} is sufficiently large (depending on $p, \alpha, d$), or if $p(1\wedge\beta)^{d\alpha}$ is sufficiently close to $1$, then there exists a constant $A>0$ such that for all $k$ sufficiently large and for all $n$ satisfying
 $n(\log n)^{-2d/(d-1)}\ge k$,
 $$
  \Prob\big(|\CC_{n}^\sss{(2)}| \ge k\big) \le (n \log n)\exp\big(-k^{(d-1)/d}\big).
 $$
\end{proposition}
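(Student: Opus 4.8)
The plan is to show that a connected component of size at least $k$ that is disjoint from the giant must "pay" a surface-tension cost of order $\exp(-\Theta(k^{(d-1)/d}))$, and then take a union bound over the $|\Lambda_n|=n$ possible anchor vertices and over the possible "sizes" of the component. The heart of the matter is a deterministic/combinatorial coarse-graining: given a finite connected set $S\subseteq\Z^d$ with $|S|\ge k$, tile $\Z^d$ into boxes of a fixed side-length $L$ (chosen large, depending on $p,\alpha,d$) and look at the "block cluster" $\bar S$ formed by the boxes that $S$ intersects. Two regimes occur. Either $\bar S$ is itself spatially large — its bounding box has diameter $\gtrsim k^{1/d}$ — in which case its (discretized) outer boundary contains $\gtrsim k^{(d-1)/d}$ boundary boxes, each of which must fail to connect to $S$ in a way that contributes a constant factor less than $1$; or $\bar S$ is spatially localized inside a box of volume $O(k)$ but contains $\ge k/L^d$ occupied sub-boxes, in which case a positive density of the $\Theta(k/L^d)$ candidate boxes inside must be "bad" (not well-connected internally or to $S$), again each contributing a constant cost, so the total is $\exp(-\Theta(k))$, which is even smaller. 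In both regimes the per-box cost is made $<1$ by taking $L$ large (respectively $p(1\wedge\beta)^{d\alpha}$ close to $1$); this is exactly where the hypothesis on $\beta$ or $p$ is used.

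The key steps, in order, would be: (i) Fix $L=L(p,\alpha,d)$ and declare a box $Q$ of side $L$ \emph{good} if the induced subgraph on the slightly enlarged box $\widehat Q$ has a connected component spanning $\widehat Q$ (a "crossing cluster"), and moreover all vertices in $Q$ that lie in a cluster of diameter $\ge L$ are connected to this crossing cluster. Standard block-percolation / renormalization arguments (as in Grimmett's supercritical percolation, or the companion paper~\cite{clusterI}) give that $\Prob(Q\text{ bad})\le \varepsilon(L)$ with $\varepsilon(L)\to0$ as $L\to\infty$ when $\beta$ is large, uniformly in the configuration outside a bounded neighborhood; crucially the events $\{Q\text{ bad}\}$ for well-separated boxes are independent, and in general they have finite-range dependence plus a long-edge correction controlled by the integrability condition~\eqref{eq:int-cond} (this is where $\alpha>1+1/d$ enters). (ii) Show that if $\CC_n^\sss{(2)}$ has size $\ge k$ then it is disjoint from every crossing cluster of every good box it meets — because all good crossing clusters in a connected chain of good boxes merge into one large component, which (by a separate argument, or by~\cite{clusterI}) must be the giant $\CC_n^\sss{(1)}$. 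Hence either the block cluster of $\CC_n^\sss{(2)}$ contains $\gtrsim k^{(d-1)/d}$ bad boxes on its outer boundary, or $\CC_n^\sss{(2)}$ is contained in a region of $O(k)$ boxes, $\Omega(k/L^d)$ of which are bad. (iii) Bound the number of such "animals": the number of connected block-sets of $m$ boxes containing a fixed box is at most $C^m$ for a dimensional constant $C$ (Kesten's lattice-animal bound), and summing $C^m\varepsilon(L)^{\Omega(m)}$ over $m$ in the relevant range, with $\varepsilon(L)$ small, gives geometric decay; for the boundary regime one uses that a connected block-set whose bounding box has side $\ge r$ has outer boundary of size $\ge c r^{d-1}$. (iv) Union bound over the $n$ choices of an anchor vertex and over the diameter scale (or size $k'\ge k$) of $\CC_n^\sss{(2)}$, absorbing polynomial-in-$n$ and polynomial-in-$k$ factors into the prefactor $n\log n$; the constraint $n(\log n)^{-2d/(d-1)}\ge k$ is used to guarantee the box $\Lambda_n$ is large enough that the estimate is not vacuous and that the giant is unambiguously the largest.

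The main obstacle I anticipate is \textbf{step (ii)} — converting "all good crossing clusters merge" into "$\CC_n^\sss{(2)}$ avoids them" and identifying the merged component with $\CC_n^\sss{(1)}$. Naively, good boxes might not form a connected backbone spanning $\Lambda_n$; one needs that the good boxes percolate (with a crossing cluster of $\Lambda_n$), which follows from $\varepsilon(L)$ being below the block-percolation threshold, \emph{and} one needs a lower bound $|\CC_n^\sss{(1)}|\ge n - o(n)$ so that any second component of polylog-or-larger size is genuinely the second-largest. This interplay between the renormalization (which wants $L$ large but fixed) and the long edges (which are handled by~\eqref{eq:int-cond} but create unbounded-range dependence among bad-box events) is the delicate point; I expect the cleanest route is to define badness so that long edges that "jump over" a box are incorporated into the badness of a bounded number of boxes via a summable decomposition of edge-lengths, keeping $\Prob(Q\text{ bad})$ small while retaining enough independence for the animal sum. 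The remaining pieces — lattice-animal counting, surface-to-volume isoperimetry for block sets, and the final union bound — are routine given the companion paper~\cite{clusterI}.
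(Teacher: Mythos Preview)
Your proposal is a Pisztora-style renormalization, which is the natural first instinct but does not survive the central difficulty of long-range percolation: a component $S$ of size $k$ can be \emph{delocalized}---built from many small $1$-connected blocks joined only by long edges. In that case the coarse-grained set $\bar S$ of $L$-boxes that $S$ meets need not be nearest-neighbor connected in the renormalized lattice at all, so Kesten's lattice-animal bound in your step~(iii) simply does not apply. One could try to restore connectedness by also including every box a long edge of $S$ passes through, but then controlling the size of this enlarged animal and summing over its shapes requires exactly the edge-crossing estimate that makes $\alpha>1+1/d$ relevant---and that part is not sketched. A second, related gap: with the local definition of ``good'' in step~(i), the implication in step~(ii) fails, since a vertex $v\in S\cap Q$ can lie in a tiny cluster inside $\widehat Q$ while being connected to the rest of $S$ only by a long edge leaving $\widehat Q$; then $Q$ is good yet $S$ is not forced into the crossing cluster. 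You flag step~(ii) as the obstacle, but the structural gap is step~(iii).

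The paper's route is genuinely different and confronts delocalization directly. It decomposes the candidate component into its maximal $1$-connected blocks $A_1,\dots,A_b$, uses Peierls to enumerate each closure $\bar A_i$ with prescribed boundary size, and---the key new idea---bounds the number of ways the blocks can be wired into a single component by summing over labeled spanning trees of the block graph (Statement~\ref{stat:spanning}, Lemma~\ref{claim:spanning}). The finiteness of that spanning-tree sum is precisely where $\alpha>1+1/d$ enters, via the convergence of $\sum_{r\ge1} r^{-(\alpha-1)d}$ coming from the number of length-$r$ edges crossing a unit surface (Claim~\ref{claim:cross}). This combinatorial factor is then balanced against the isolation cost (Statement~\ref{stat:disconn}), which supplies the $(1-p(1\wedge\beta)^{d\alpha})^{\Theta(\beta)\sum_i m_i}$ needed to beat it when $\beta$ or $p$ is large. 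A separate hole-counting argument (Lemma~\ref{lemma:holes}) handles the boundary case where one block has closure exceeding $3n/4$; the pigeonholing over principal hole-types there is the source of the $(\log n)^{-2}$ in the hypothesis $n(\log n)^{-2d/(d-1)}\ge k$, which your outline does not account for.
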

We believe that Proposition \ref{prop:second-largest}, and hence Theorem \ref{thm:longrange}, should hold for any values of $\beta, p$ that lead to a supercritical graph: however, this would require non-trivial adaptations of our proof techniques. 

\subsection{Idea of proof}\label{sec:sketch} The proof of Proposition \ref{prop:second-largest} relies on a careful first-moment analysis in which we count all possible candidates of isolated components of size at least $k$. The starting point is the classic isoperimetric inequality which states that any set $\CS$ of at least $k$ vertices has an edge-boundary of size $|\partial \CS|=\Omega(k^{(d-1)/d})$. These edges need to be absent when $\CS$ is a connected component (or simply component below), i.e., detached from the rest of the graph. The combinatorial difficulty arises when we account for all possible candidate components $\CS$: the structure of $\CS$ is more complex than for nearest-neighbor bond percolation in $\mathbb{Z}^d$, since $\CS$ can be ``delocalized'' in space.
The second difficulty arises in the finite box $\Lambda_n\subseteq\mathbb{Z}^d$, where we need to take boundary effects into account caused by possibly shared boundaries of $\partial \CS $ and $\partial \Lambda_n$.

To resolve these two complications, we distinguish two types of components: the first type consists of several ``blocks'' connected by long edges: each block is a connected subset of $\mathbb{Z}^d$ (with respect to nearest-neighbor relation in $\Z^d$). We consider each possible combination of blocks  with fixed total outer edge-boundary size $m$, and give an upper bound on the probability that these blocks form a connected component by counting all possible spanning trees on these blocks.
We show that the combinatorial factor arising from counting all potential components with boundary $m$ is at most exponential in $m$.
We then use the large value of $\beta$ or $p$ in our favor to prove that the probability that such a component is formed and \emph{isolated} is sufficiently small.

The second type of potential component $\CS$ contains a large block that has a large overlap with the boundary of $\Lambda_n$, and consequently $|\partial \CS|$ inside $\Lambda_n$ may be small. Again, a simple enumeration of all such blocks would yield too large combinatorial factors. Instead, we use only certain holes of $\CS$ and an adapted isoperimetric inequality to still ensure that many edges need to be absent. Then we group potential large components, such that a large number of holes coincide for each potential large component in the same group. This makes combinatorial factors much smaller, and we obtain the right decay.
\subsection*{Organization}
In Section \ref{sec:main-thm}, we  derive an intermediate upper bound for $\Prob\big(|\CC_n^\sss{(2)}|\ge k\big)$, defining the two types of components formally. Then, we state two lemmas and show that  they imply Proposition \ref{prop:second-largest}. We prove the two lemmas in separate sections. In the last section we use the result of Proposition \ref{prop:second-largest} to prove Theorem \ref{thm:longrange}.
\subsection*{Notation}
Let $H=(V_H,E_H)$ be a graph.
For two sets $A, B\subseteq V_H$, we write $A\sim_H B$ if there exist $x \in A, y \in B$ such that $\{x,y\} \in E_H$, and $A\not\sim_H B$ if no such pair exists. We leave out the subscript $H$ if the graph is clear from the context.
For $A\subseteq V_H$, we write $H[A]$ for the induced subgraph of $H$ on vertices in $A$. Denote by $\Z^d_\infty$ the graph on the vertex set $\Z^d$ and an edge between $x, y\in\Z^d$ if and only if $\|x-y\|_\infty=1$. Similarly, let $\Z^d_1$ be the graph on the vertex set $\Z^d$ and an edge between $x, y\in\Z^d$ if and only if $\|x-y\|_1=1$. As already mentioned, we write $\|\cdot\|:=\|\cdot\|_2$. For two sets $A, B \subseteq \mathbb{Z}^d$, denote by $\|A-B\|_p = \min\{\|x-y\|_p \mid x\in A, y \in B\}$. For any graph structure, we say that a path $\pi=(v_1, v_2, v_3, \dots)$ is \emph{self-avoiding} if its vertices are all distinct. For $x,y\in \R$, we write $x\wedge y:=\min(x,y), x\vee y:=\max(x,y)$. 
We sometimes abuse notation and write $\cup_{i\ge1}A_i$ rather than $\bigcup_{i\ge 1}A_i$ for the union.
We also refer to Definition~\ref{def:boundary} below for the exterior boundary of $A\subseteq\Z^d$ with respect to either $\Lambda_n$ or $\Z^d$, denoted by $\partial_\mathrm{ext}A$, resp.\ $\widetilde\partial_\mathrm{ext}A$, and the interior boundaries $\partial_\mathrm{int}A$ and $\widetilde\partial_\mathrm{int}A$ with respect to $\Lambda_n$, resp.\ $\Z^d$.

\section{Preliminaries and setup}
\label{sec:main-thm}
Throughout the rest of the paper, we assume that $d\ge 2$, $\alpha>1+1/d$, and $n^{1/d}\in\N$. 
This latter assumption means that the box $\Lambda_n$ contains exactly $n$ vertices and avoids integer parts in our formulas. Adaptation to arbitrary $n$ is straightforward.

We now formalize the concepts from the proof outline in Section \ref{sec:sketch} that eventually lead to two lemmas, one for each of the two described types of components. To ensure that the upcoming definitions naturally follow each other, we will postpone the (sometimes standard) proofs of intermediate claims to the appendix.
We start with a definition to describe sets of $\Lambda_n$ that  form (subsets of) the second-largest component.
\begin{definition}[Connected sets and blocks]\label{def:blocks}
 We call a non-empty set $A\subseteq \Z^d$ of vertices \hypertarget{1conn}{\emph{$1$-connected}} or a \emph{block}, if the graph $\Z^d_1[A]$ consists of a single connected component. We similarly define $A$ being \emph{$\ast$-connected} if the graph  $\Z^d_\infty[A]$ consists of a single connected component.
 We write
 \begin{equation}\label{def:calA}
  \begin{aligned}
   \CA:=\big\{A\subseteq\Lambda_n \mid A\mbox{ is $1$-connected}\big\},\qquad
   \CA_\ast:=\big\{A\subseteq\Lambda_n \mid A\mbox{ is $\ast$-connected}\big\}.
  \end{aligned}
 \end{equation}
 A sequence of at least two sets $A_1,A_2,\ldots \subseteq\Z^d$ is \emph{$1$-disconnected} if $\|A_i-A_j\|_1>1$ for all $i\neq j$. We say that a set $A\subseteq \Lambda_n$ \emph{consists of blocks} $A_1,\ldots,A_b$ if $A_i$ is a (non-empty) block for $1\le i \le b$, if the sequence $(A_i)_{i\le b}$ is $1$-disconnected, and their union equals $A$. We write $A_1,\ldots, A_b\hypertarget{1el}{\in_1}\CA$ if the blocks $A_1,\ldots,A_b$ are $1$-disconnected.

 We say that a vertex $x$ is \emph{surrounded} by $A\in\CA$ if each infinite $1$-connected self-avoiding path starting from $x$ contains a vertex of $A$. We define for $A\in\CA$ its \emph{closure} $\bar{A}$  as
 \begin{equation}\label{eq:closure}
  \hypertarget{closure}{\bar{A}}=A\cup\{x\in\Z^d: x\mbox{ surrounded by }A\}.
 \end{equation}
 We call the maximal $1$-connected subsets of $\bar A\setminus A$ the holes of $A$, and write $\mathfrak{H}_A$ for the collection of holes.
\end{definition}
We make a few comments. The `vertices on the boundary' of $A$, that we shall shortly define, are not surrounded by $A$, but they belong to both $A$ and $\bar A$. 
The closures of blocks will be used for the first type of components described in Section \ref{sec:sketch}.
Take now a block $A\subseteq \Lambda_n$. Then $x$ can only be surrounded by $A$ if $x\in\Lambda_n$, hence $A\subseteq\Lambda_n$ implies that $\bar{A}\subseteq\Lambda_n$.
Due to the presence of long-range edges, a component in long-range percolation may consist of multiple $1$-disconnected blocks (some of them possibly consisting of a single vertex). We define the notion of a block graph.
\begin{definition}[Block graph]\label{def:blockgraph}
 Let $A_1,\ldots, A_b \in_1\CA$  be a sequence of $1$-disconnected blocks, and consider a graph $G$ on vertices $V_G \supseteq	 \cup_{i\le b} A_i$. The \emph{block graph} $\CH_G((A_i)_{i\le b})=(V_{\CH_G}, E_{\CH_G})$ of $G$ on blocks $A_1,\ldots, A_b$ is defined as
 \begin{equation}
  V_{\CH_G}:=\{1,\ldots, b\}, \qquad E_{\CH_G}:=\big\{\{i, j\}: A_i\sim_G A_j\big\}.\nonumber%
 \end{equation}
\end{definition}
In words, the vertices of each block are contracted to a single vertex in the block graph, and two corresponding vertices for the blocks $i$ and $j$ are connected in the block graph if and only if there exists an edge in the original graph $G$ between (some vertices in) the two blocks. We continue with a simple claim, with proof in the appendix on page~\pageref{sec:unique-decomp}.
\begin{claim}[Unique block-decomposition of components]\label{claim:unique}
Let $A$ be any subset of vertices in $\Lambda_n$. There exists $b\in\N$ such that $A$ can be uniquely partitioned into a $1$-disconnected sequence $(A_i)_{i\le b}$  of blocks up to permutation of the blocks. Further, if $A$ is the vertex set of a connected component $\CC$ of $\CG_n$, then the block graph $\CH_{\CG_n}((A_i)_{i\le b})$ is connected.
 \end{claim}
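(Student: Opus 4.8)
The plan is to realize the blocks of $A$ as the connected components of the induced graph $\Z^d_1[A]$, and to read off the three required properties from this description, after which the statement about the block graph follows from a short connectivity argument.

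First I would introduce the equivalence relation $\approx$ on $A$ given by $x\approx y$ iff $x,y$ lie in the same connected component of $\Z^d_1[A]$, and let $A_1,\dots,A_b$ be its equivalence classes. Since $A\subseteq\Lambda_n$ is finite, $b\le|A|<\infty$. By construction each $A_i$ induces a connected subgraph of $\Z^d_1$, i.e.\ is a block, and $\cup_{i\le b}A_i=A$. For $1$-disconnectedness: if $\|A_i-A_j\|_1=1$ for some $i\ne j$, there are $x\in A_i$, $y\in A_j$ with $\|x-y\|_1=1$, so $\{x,y\}$ is an edge of $\Z^d_1[A]$ and hence $x\approx y$, contradicting $A_i\ne A_j$; since distinct points of $\Z^d$ have $\ell_1$-distance at least $1$, this yields $\|A_i-A_j\|_1>1$ for all $i\ne j$.

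For uniqueness up to permutation, suppose $(B_1,\dots,B_{b'})$ is another $1$-disconnected sequence of blocks with union $A$. Each $B_j$, being $1$-connected, lies inside a single connected component of $\Z^d_1[A]$, hence inside some $A_{i(j)}$. Conversely, fix $A_i$ and suppose it meets two distinct sets $B_j,B_{j'}$; since $A_i$ is $1$-connected, $\Z^d_1[A_i]$ contains a path from a vertex of $A_i\cap B_j$ to a vertex of $A_i\cap B_{j'}$, and along this path some $\Z^d_1$-edge joins a vertex of one $B$-block to a vertex of a different $B$-block, forcing two of the $B_j$'s to be at $\ell_1$-distance $1$, a contradiction. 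Hence each $A_i$ is contained in a single $B_j$; combined with the previous inclusion, the two partitions coincide up to relabelling.

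Finally, suppose $A=V(\CC)$ for a connected component $\CC$ of $\CG_n$, but that $\CH_{\CG_n}((A_i)_{i\le b})$ is disconnected, so that $\{1,\dots,b\}=I\sqcup J$ with $I,J$ nonempty and $A_i\not\sim_{\CG_n}A_j$ for all $i\in I,\ j\in J$. Put $U:=\cup_{i\in I}A_i$ and $W:=\cup_{j\in J}A_j$; both are nonempty and $U\cup W=A$. Every edge of $\CG_n$ with both endpoints in $A$ has its endpoints in two (possibly equal) blocks $A_i,A_j$, and if $i\in I,\ j\in J$ it would induce the edge $\{i,j\}$ in the block graph, which is excluded; hence every such edge lies entirely within $U$ or entirely within $W$, so $\CG_n[A]=\CC$ is disconnected, a contradiction. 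The whole argument is elementary and combinatorial; the only mildly delicate point is the ``walk along the path'' step in the uniqueness proof, where $1$-disconnectedness of the $(B_j)$ is invoked to extract a single crossing $\Z^d_1$-edge, and I do not anticipate a genuine obstacle.
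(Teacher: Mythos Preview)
Your proof is correct and follows essentially the same approach as the paper: defining the blocks as the connected components of $\Z^d_1[A]$ (equivalently, the equivalence classes of the $1$-connectivity relation on $A$), deducing $1$-disconnectedness from the definition, and arguing block-graph connectedness by contradiction via a bipartition of the blocks. Your uniqueness argument is spelled out in more detail than the paper's one-line ``uniqueness follows because $\leftrightarrow_{A,1}$ is an equivalence relation,'' but the underlying idea is identical.
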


Later, we will  enumerate subsets $\CS\subseteq \Lambda_n$ of vertices  that potentially form a component of LRP in $\Lambda_n$.
To ensure that a subset is isolated from the rest of the graph, there must be no  edge from $\CS$ to its "surrounding" \emph{inside} $\Lambda_n$. This motivates the following definition of boundaries with respect to $\Lambda_n$.
\begin{definition}[Boundaries]\label{def:boundary}
 Let $A\subseteq\Lambda_n$. We define the \emph{exterior boundary} of $A$ with respect to $\Lambda_n$ and $\Z^d$, respectively, as
 \begin{equation}\label{eq:ext-boundary}
  \hypertarget{extL}{\partial_\mathrm{ext}} A := \{x\in\Lambda_n:  \|\{x\}-A\|_1=1\},
  \qquad
  \hypertarget{extZ}{\widetilde\partial_\mathrm{ext}} A := \{x\in\Z^d: \|\{x\}-A\|_1=1\}.
 \end{equation}
 We define the \emph{interior boundary} of $A$ with respect to $\Lambda_n$ and $\Z^d$, respectively, as
 \begin{equation}\label{eq:int-boundary}
  \hypertarget{intL}{\partial_\mathrm{int}}A:= \{x\in A: \|\{x\}-\partial_\mathrm{ext} A\|_1=1\},\qquad
  \hypertarget{intZ}{\widetilde\partial_\mathrm{int}}A:= \{x\in A: \|\{x\}-\widetilde\partial_\mathrm{ext} A\|_1=1\}.
 \end{equation}
 If, in words, we mention the exterior or interior boundary of $A$ then  -- unless explicitly specified differently -- we mean with respect to $\Lambda_n$.
\end{definition}
We mention that $\widetilde\partial_{\mathrm{int}}\Lambda_n$ is the `usual' vertex boundary of $\Lambda_n$.
The boundary $\widetilde\partial_\mathrm{ext} A$ may contain vertices outside $\Lambda_n$, and will be useful in the enumeration of subsets forming isolated components below. It may happen that a block $A$ contains (many) vertices of $\widetilde\partial_{\mathrm{int}}\Lambda_n$. On such regions, $A$ may not have exterior boundary vertices, implying that there $\partial_{\mathrm{int}}A$ is also empty. The next claim contains basic properties of blocks, their closures, and their boundaries, with proof in the Appendix on page \pageref{proof:containment}.

\begin{claim}[Blocks, their closures and their boundaries]\label{claim:containment}
 The following five statements hold:
 \begin{enumerate}
 \setlength\itemsep{0em}
  \item[(i)] For any block $B$, $\widetilde\partial_\mathrm{int}\bar B\subseteq\widetilde\partial_\mathrm{int} B$.
  \item[(ii)]
        Let $B_1, B_2$ be $1$-disconnected blocks such that $\bar B_1\cap\bar B_2\neq \emptyset$.
        Then either $\bar B_1\subseteq \bar B_2$ or $\bar B_2\subseteq \bar B_1$.
  \item[(iii)]
        Let $B_1, B_2$ be $1$-disconnected blocks such that $\bar B_1\cap\bar B_2 = \emptyset$. Then $\bar B_1, \bar B_2$ are also $1$-disconnected from each other. 
  \item[(iv)]For any block $B$, $\widetilde \partial_{\mathrm{int}}\bar B$ and  $\widetilde \partial_{\mathrm{ext}}\bar B$ are $*$-connected. 
  \item[(v)]For any hole $H$ of a block $B$, we have that $H=\bar H$, so $\widetilde\partial_{\mathrm{int}} H$ and $\widetilde\partial_{\mathrm{ext}} H$ are $\ast$-connected.
 \end{enumerate}
\end{claim}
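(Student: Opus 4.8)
The plan is to reduce all five parts to a single structural picture of a finite block $B$ inside the graph $\Z^d_1$, together with one classical boundary-connectivity lemma. Since $B$ is finite, fix $R$ with $B\subseteq[-R,R]^d$; because $d\ge 2$, the set $\Z^d\setminus[-R,R]^d$ is $1$-connected, and any infinite component of $\Z^d_1\setminus B$ contains a vertex outside $[-R,R]^d$ and hence contains all of $\Z^d\setminus[-R,R]^d$, so $\Z^d_1\setminus B$ has exactly one infinite component $\CK_\infty(B)$. Unwinding the definition of ``surrounded'', a vertex $x\notin B$ is surrounded by $B$ iff it lies in a \emph{finite} component of $\Z^d_1\setminus B$; hence $\bar B\setminus B$ is exactly the union of the finite components of $\Z^d_1\setminus B$ (these are precisely the holes $\mathfrak H_B$), while $\Z^d\setminus\bar B=\CK_\infty(B)$. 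In particular $\bar B$ is $1$-connected (each hole has non-empty exterior boundary w.r.t.\ $\Z^d$, which lies in $B$, so each hole is $\Z^d_1$-adjacent to $B$) and $\Z^d\setminus\bar B$ is $1$-connected. The second ingredient is the standard lemma that if $A\subsetneq\Z^d$ is finite and non-empty and both $A$ and $\Z^d\setminus A$ induce connected subgraphs of $\Z^d_1$, then $\widetilde\partial_\mathrm{int}A$ and $\widetilde\partial_\mathrm{ext}A$ induce connected subgraphs of $\Z^d_\infty$; I would invoke this from the literature (Tim\'ar, or Deuschel--Pisztora) or give its short graph-theoretic proof in the appendix.

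Parts (i) and (iii) follow quickly. For (i): if $x\in\widetilde\partial_\mathrm{int}\bar B$ it has a $\Z^d_1$-neighbour $y\in\widetilde\partial_\mathrm{ext}\bar B\subseteq\Z^d\setminus\bar B=\CK_\infty(B)$, in particular $y\notin B$; if $x$ were in a hole it would lie in the same (finite) component of $\Z^d_1\setminus B$ as its neighbour $y$, which is impossible, so $x\in B$, and then $y\notin B$ witnesses $x\in\widetilde\partial_\mathrm{int}B$. Part (iii) uses the same trick: if $\bar B_1$ and $\bar B_2$ were at $\ell^1$-distance $1$, a witnessing edge $\{x,y\}$ with $x\in\bar B_1$, $y\in\bar B_2$ has (by $\bar B_1\cap\bar B_2=\emptyset$) $y\notin\bar B_1$ and $x\notin\bar B_2$, and arguing as in (i) forces $x\in B_1$ and $y\in B_2$, contradicting $1$-disconnectedness of $B_1,B_2$. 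Along the way I would record the monotonicity sub-lemma ``$B_1\subseteq\bar B_2\Rightarrow\bar B_1\subseteq\bar B_2$'': a hole vertex $x$ of $B_1$ with $x\notin B_2$ is surrounded by $B_1\subseteq\bar B_2$, so every infinite self-avoiding $\Z^d_1$-path from $x$ enters the finite set $\bar B_2$ and, being infinite, crosses out of it through an edge whose inner endpoint lies in $\widetilde\partial_\mathrm{int}\bar B_2\subseteq B_2$ by part~(i); hence every such path meets $B_2$, i.e.\ $x\in\bar B_2$.

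For (ii), each block $B_i$ lies in a single component of $\Z^d_1\setminus B_{3-i}$, so is either contained in $\bar B_{3-i}$ or disjoint from it. If $B_i\subseteq\bar B_{3-i}$ for some $i$, the sub-lemma gives $\bar B_i\subseteq\bar B_{3-i}$ and we are done, so it remains to rule out $B_1\cap\bar B_2=\emptyset=B_2\cap\bar B_1$ while $\bar B_1\cap\bar B_2\neq\emptyset$. In that case a common vertex $w$ lies in a hole $H_1$ of $B_1$ and a hole $H_2$ of $B_2$; $H_2$ cannot meet $B_1$ (else the connected set $B_1$ would lie inside $H_2\subseteq\bar B_2$), so $H_2$ is a connected subset of $\Z^d_1\setminus B_1$ through $w$ and hence $H_2\subseteq H_1$, and symmetrically $H_1\subseteq H_2$; then $H:=H_1=H_2$ satisfies $\widetilde\partial_\mathrm{ext}H\subseteq B_1\cap B_2=\emptyset$, impossible for a non-empty finite subset of the connected infinite graph $\Z^d_1$. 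I expect this case distinction to be the main obstacle --- not because any single step is deep, but because one has to track carefully which inclusions among the $B_i$, the $H_i$ and the $\bar B_i$ are actually justified.

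Finally, (iv) is immediate from the two ingredients: $\bar B$ and $\Z^d\setminus\bar B=\CK_\infty(B)$ are both $1$-connected, so the boundary lemma applies with $A=\bar B$. For (v), given a hole $H$ of $B$, I would first check that $\Z^d\setminus H$ is $1$-connected: it is the union of $B$, of $\CK_\infty(B)$, and of the other holes of $B$, where $B$ is connected and each of the remaining pieces is connected and $\Z^d_1$-adjacent to $B$ (for $\CK_\infty(B)$, use the neighbour in direction $e_1$ of a vertex of $B$ with maximal first coordinate). Hence $\Z^d_1\setminus H$ has no finite component, so $\bar H=H$; and $H$, being a component of $\Z^d_1\setminus B$, is $1$-connected, so the boundary lemma applied with $A=H$ shows $\widetilde\partial_\mathrm{int}H$ and $\widetilde\partial_\mathrm{ext}H$ are $\ast$-connected, completing the proof.
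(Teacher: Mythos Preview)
Your proof is correct, and it takes a genuinely different (and in several places cleaner) route from the paper's.

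The paper argues part by part in a more ad hoc fashion: (i) by contradiction, splitting into the cases $x\in B\setminus\widetilde\partial_\mathrm{int}B$ and $x$ in a hole; (ii) by a three-way case split on whether a common point of $\bar B_1\cap\bar B_2$ lies in $B_1$, in $B_2$, or in holes of both, the last case handled by walking a $1$-path until it hits one of the boundaries; (iii) by an explicit computation with the distance formula $\|\bar B_1-\bar B_2\|_1$; (iv) by citing Deuschel--Pisztora; and (v) by arguing that a hypothetical hole $J$ of $H$ would have to intersect $B$ and thereby disconnect $B$. Your approach instead front-loads the structural observation that $\Z^d\setminus\bar B$ is exactly the unique infinite component $\CK_\infty(B)$ of $\Z^d_1\setminus B$, and then reuses this single picture everywhere. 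This buys you: a direct two-line proof of (i); a slicker proof of (iii) that just invokes (i) twice on the witnessing edge; a proof of (ii) via the monotonicity sub-lemma $B_1\subseteq\bar B_2\Rightarrow\bar B_1\subseteq\bar B_2$ together with the contradiction $\widetilde\partial_\mathrm{ext}H\subseteq B_1\cap B_2=\emptyset$ in the remaining case, which avoids the paper's path-tracing; and a transparent proof of (v) by showing $\Z^d\setminus H$ is $1$-connected (so has no finite component), rather than reasoning about a hypothetical hole-of-a-hole. Parts (iv) and the boundary-connectivity input are the same in both proofs. Your version is more conceptual and would transplant more easily to other settings; the paper's version is self-contained at the level of individual cases and does not isolate the sub-lemma.
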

The statements (ii)--(iii) say that if $B_1$ and $B_2$ are 1-disconnected, then either $B_1$ is inside a hole of $B_2$ or the other way round (ii), or their closures are also 1-disconnected (iii). Part (v) states that a hole $H$ of a 1-connected block $B$ cannot contain further holes. The fourth statement (iv) of the preceding claim is \cite[Lemma 2.1]{deuschel1996surface}.
The next claim shows that  the sizes of the boundaries with respect to $\Lambda_n$ and $\Z^d$ are of the same order, provided that the set has cardinality at most $3n/4$. Moreover, the claim contains an \emph{isoperimetric inequality} that we extensively use below. The proof is given in the appendix on page \pageref{proof:isoperimetry}.
\begin{claim}[Boundary bounds and isoperimetry]\label{claim:iso}
 There exists $\delta>0$ such that  for all $A\subseteq\Lambda_n$
 with $|A|\le 3n/4$ or $A\cap\widetilde\partial_\mathrm{int}\Lambda_n=\emptyset$,
 \begin{equation}
  | \partial_\mathrm{int} A|\ge \delta|\widetilde \partial_\mathrm{int}  A|\ {\buildrel (\star) \over \ge}\ \delta|A|^{(d-1)/d},
  \qquad
  |\partial_\mathrm{ext} A|\ge \delta|\widetilde \partial_\mathrm{ext}  A| \ {\buildrel (\star) \over \ge}\ \delta|A|^{(d-1)/d}. \label{eq:iso}
 \end{equation}
 The inequalities with $(\star)$ hold for any $A\subseteq \Lambda_n$ without  conditions on $A$.
\end{claim}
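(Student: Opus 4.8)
These are the standard vertex-isoperimetric inequalities on $\Z^d$, which I would deduce from the discrete Loomis--Whitney inequality $|A|^{d-1}\le\prod_{i=1}^d|\pi_i(A)|$, where $\pi_i\colon\Z^d\to\Z^{d-1}$ is projection forgetting the $i$-th coordinate. This produces a direction $i_0$ with $|\pi_{i_0}(A)|\ge|A|^{(d-1)/d}$; on each line parallel to $e_{i_0}$ that meets $A$, the $A$-vertex with largest $i_0$-coordinate lies in $\widetilde\partial_\mathrm{int}A$ and its successor lies in $\widetilde\partial_\mathrm{ext}A$, and distinct lines give distinct vertices, so $|\widetilde\partial_\mathrm{int}A|\wedge|\widetilde\partial_\mathrm{ext}A|\ge|\pi_{i_0}(A)|\ge|A|^{(d-1)/d}$, with no condition on $A$.

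\textbf{Reduction of the conditional inequalities.} Note first that $\partial_\mathrm{int}A\subseteq\widetilde\partial_\mathrm{int}A$, and since every vertex has $2d$ lattice neighbours, $|\partial_\mathrm{int}A|$ and $|\partial_\mathrm{ext}A|$ agree up to a factor $2d$. Put $D:=\widetilde\partial_\mathrm{int}A\setminus\partial_\mathrm{int}A$. A vertex of $D$ lies in $A$, has a neighbour outside $\Lambda_n$, and has all of its neighbours that lie in $\Lambda_n$ again in $A$; thus $D\subseteq A\cap\widetilde\partial_\mathrm{int}\Lambda_n$. Dually, each vertex of $\widetilde\partial_\mathrm{ext}A\setminus\partial_\mathrm{ext}A$ lies outside $\Lambda_n$ and has a unique neighbour inside $\Lambda_n$, which lies in $A\cap\widetilde\partial_\mathrm{int}\Lambda_n$, with this correspondence at most $d$-to-one; since $A\cap\widetilde\partial_\mathrm{int}\Lambda_n\subseteq\widetilde\partial_\mathrm{int}A=\partial_\mathrm{int}A\cup D$, this gives $|\widetilde\partial_\mathrm{ext}A\setminus\partial_\mathrm{ext}A|\le d\,(|\partial_\mathrm{int}A|+|D|)$. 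When $A\cap\widetilde\partial_\mathrm{int}\Lambda_n=\emptyset$ the $\Lambda_n$- and $\Z^d$-boundaries coincide and the claim is trivial; otherwise everything reduces to proving $|D|\le C_d\,|\partial_\mathrm{int}A|$ when $|A|\le 3n/4$.

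\textbf{Two regimes.} Write $L:=n^{1/d}$. If $n/8\le|A|\le 3n/4$, I claim $|\partial_\mathrm{int}A|\wedge|\partial_\mathrm{ext}A|\ge\varepsilon_d\,n^{(d-1)/d}$, which is enough since $|D|$ and $|\widetilde\partial_\mathrm{ext}A\setminus\partial_\mathrm{ext}A|$ are both $O(|\widetilde\partial_\mathrm{int}\Lambda_n|)=O(n^{(d-1)/d})$. For this bound, classify for each direction $i$ the $L^{d-1}$ lines parallel to $e_i$ as lying in $A$ ($a_i$ of them), straddling $A$ and $\Lambda_n\setminus A$ ($m_i$ of them), or disjoint from $A$: then $|\pi_i(A)|=a_i+m_i$, $a_i\le|A|/L$, and each straddling line contributes a distinct vertex to both $\partial_\mathrm{int}A$ and $\partial_\mathrm{ext}A$, so $|\partial_\mathrm{int}A|\wedge|\partial_\mathrm{ext}A|\ge m_i$; were $m_i<\varepsilon\,n^{(d-1)/d}$ for all $i$, then Loomis--Whitney would force $\gamma^{d-1}\le(\gamma+\varepsilon)^d$ for $\gamma:=|A|/n$, impossible for $\gamma\le 3/4$ once $\varepsilon=\varepsilon_d$ is small enough (depending only on $d$). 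If instead $|A|<n/8$, then for $x\in D$ and any coordinate $j$ with $x_j\in\{0,L-1\}$ I follow the lattice line through $x$ in the inward direction $\pm e_j$; since $x\in D$ this line starts in $A$, and either it leaves $A$ while still inside $\Lambda_n$ — so the last $A$-vertex of the run belongs to $\partial_\mathrm{int}A$, and $x$ is recovered from it, from $j$ and from the orientation, so this part of $D$ injects into $\partial_\mathrm{int}A$ up to a factor $2d$ — or the line lies entirely in $A$. Let $D_{\mathrm{full}}$ consist of those $x\in D$ for which \emph{every} such inward line lies in $A$; then $x$ lies in $\overline{R_j}$, the union of the full direction-$j$ lines of $A$, a set of size $L\,|R_j|\le|A|$, so $|R_j|\le L^{d-1}/8$ and $D_{\mathrm{full}}\subseteq\bigcup_j\bigl(\overline{R_j}\cap\widetilde\partial_\mathrm{int}\Lambda_n\bigr)$.

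\textbf{The delicate point, and the main obstacle.} What remains is to bound $|\overline{R_j}\cap\widetilde\partial_\mathrm{int}\Lambda_n|$ by a constant times $|\partial_\mathrm{int}A|$. I would derive this from the $(d-1)$-dimensional isoperimetric inequality for $R_j$ inside a face of $\Lambda_n$ (effective because $|R_j|$ is a bounded fraction of the face) together with the fact that the $\Lambda_n$-boundary of the cylinder $\overline{R_j}$ injects with bounded multiplicity into $\partial_\mathrm{int}A$ — essentially a dimension reduction, whose case $d=2$ is elementary since there a single full column already forces $|\partial_\mathrm{int}A|\ge(1-\tfrac18)L$. A shorter route is to quote the relative isoperimetric inequality for subsets of boxes from Deuschel--Pisztora \cite{deuschel1996surface} (whose Lemma~2.1 underlies Claim~\ref{claim:containment}(iv)), which yields both $|\partial_\mathrm{int}A|\asymp|\widetilde\partial_\mathrm{int}A|$ and $|\partial_\mathrm{int}A|\ge c_d|A|^{(d-1)/d}$ at once. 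I expect this ``full-column'' term to be the only real difficulty: it is the combinatorial shadow of a set spending an $\Omega(n^{(d-1)/d})$-sized share of its $\Z^d$-boundary on the faces of $\Lambda_n$, and the content is precisely that such a set must still pay a comparable price \emph{inside} $\Lambda_n$; the Loomis--Whitney steps, the bounded-degree comparison, and the run-length injection are routine.
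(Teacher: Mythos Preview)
Your $(\star)$ inequalities and your large-regime argument ($n/8\le|A|\le 3n/4$) are fine and share the Loomis--Whitney backbone with the paper. The gap is in your small regime: you correctly reduce to the ``full-column'' vertices $D_{\mathrm{full}}$ and to bounding $|\overline{R_j}\cap\widetilde\partial_{\mathrm{int}}\Lambda_n|$ in terms of $|\partial_{\mathrm{int}}A|$, but neither proposed route is carried out. In particular, the assertion that the $\Lambda_n$-boundary of the cylinder $\overline{R_j}$ injects with bounded multiplicity into $\partial_{\mathrm{int}}A$ is not clear---a vertex on the lateral boundary of $\overline{R_j}$ has a neighbour outside $\overline{R_j}$, but that neighbour may still lie in $A\setminus\overline{R_j}$, so the vertex need not belong to $\partial_{\mathrm{int}}A$; making this into a genuine injection requires a further inductive step you do not supply, and your alternative of citing Deuschel--Pisztora amounts to invoking the claim itself.

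The paper avoids your regime split entirely by working in a \emph{single} direction---the direction $i_\star$ of largest projection, so $|S_\star|=|\pi_{i_\star}(A)|\ge|A|^{(d-1)/d}$. It calls $s\in S_\star$ a fiber if no vertex of $\partial_{\mathrm{int}}A$ projects to $s$ (which forces $\CL_s\subseteq A$), so the fiber set $F$ satisfies $|F|\,n^{1/d}\le|A|$. The decisive observation is then
\[
\frac{|F|}{|S_\star|}\;\le\;\frac{|A|/n^{1/d}}{|A|^{(d-1)/d}}\;=\;\Bigl(\frac{|A|}{n}\Bigr)^{1/d}\;\le\;(3/4)^{1/d}\;<\;1,
\]
so a fixed positive fraction of $S_\star$ consists of non-fibers, each contributing at least one vertex to $\partial_{\mathrm{int}}A$. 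Combined with a per-line comparison between $\partial_{\mathrm{int}}A$ and $\widetilde\partial_{\mathrm{int}}A$ along each $\CL_z$ (the difference on a line is controlled by the at most two endpoints on $\widetilde\partial_{\mathrm{int}}\Lambda_n$), this gives $|\partial_{\mathrm{int}}A|/|\widetilde\partial_{\mathrm{int}}A|\ge|S_\star\setminus F|/(3|S_\star|)\ge(1-(3/4)^{1/d})/3$ directly, uniformly in $|A|$. Your full-column difficulty is precisely the fiber set $F$, and this single inequality disposes of it without any case distinction---the choice of $i_\star$ to \emph{maximise} the projection is what makes the fiber fraction automatically small.
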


The next lemma is due to Peierls \cite{peierls1936ising}. We give a proof  in the appendix on page \pageref{proof:peierl}. More general results also appeared in \cite{babson1999cut, timar2007cutsets}. We will use this  claim to enumerate blocks that satisfy $A=\bar A$.
\begin{lemma}[Peierls' argument \cite{peierls1936ising}]\label{lemma:peierl}
 There exists a constant $c_\mathrm{pei}>0$ such that for all $x\in\Z^d$ and $m\in\N$,
 \begin{align}
  |\{A\in\CA:  A \ni x, A=\bar{A}, | \widetilde\partial_\mathrm{int}A|=m\}| & \le \exp(c_\mathrm{pei}m).\label{eq:peierl-2}
 \end{align}
\end{lemma}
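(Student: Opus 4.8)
The statement to prove is that the number of blocks $A \in \CA$ with $A \ni x$, $A = \bar A$ (no holes), and $|\widetilde\partial_\mathrm{int} A| = m$ is at most $\exp(c_\mathrm{pei} m)$ for a suitable constant depending only on $d$.

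Let me sketch how I would approach this.

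The plan is to reduce the enumeration of hole-free blocks $A$ containing $x$ with interior boundary of size $m$ to the enumeration of certain connected sets in an auxiliary graph, and then bound the latter by a standard bounded-degree-tree argument. First I would observe that since $A = \bar A$, the block $A$ is determined by its interior boundary $\widetilde\partial_\mathrm{int} A$ together with the knowledge of which "side" is $A$: indeed, a hole-free block is exactly the union of its interior boundary and the finite region it encloses, so reconstructing $A$ from $\widetilde\partial_\mathrm{int} A$ amounts to filling in the bounded complementary component. Hence it suffices to count the possible sets $\Gamma := \widetilde\partial_\mathrm{int} A$. By Claim \ref{claim:containment}(iv), $\widetilde\partial_\mathrm{int}\bar B$ is $\ast$-connected for any block $B$; applied to $B = A$ (with $A = \bar A$), this tells us $\Gamma$ is a $\ast$-connected subset of $\Z^d$ with $|\Gamma| = m$.

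Next I would bound the number of $\ast$-connected sets of size $m$ in $\Z^d$ that are "anchored" near $x$. The subtlety is that $\Gamma$ need not contain $x$ itself — $x$ lies in the interior enclosed by $\Gamma$, not on $\Gamma$. To handle this, I would note that since $A$ is hole-free and contains $x$, the set $\Gamma = \widetilde\partial_\mathrm{int} A$ must "surround" $x$ in the sense that every infinite nearest-neighbor path from $x$ meets $A$, hence (peeling one layer) meets $\Gamma$ or $\widetilde\partial_\mathrm{ext} A$; in particular the straight axis-ray from $x$ in, say, the $+e_1$ direction hits $A$, so it hits a vertex of $\widetilde\partial_\mathrm{int} A = \Gamma$ at distance at most $|A| \le (\text{something})$. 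A cleaner route: the straight ray from $x$ in direction $+e_1$ must exit $A$ (since $A$ is finite), and the last vertex of $A$ on this ray lies in $\widetilde\partial_\mathrm{int} A = \Gamma$. This vertex $y$ satisfies $\|y - x\|_\infty \le |A|$, but more usefully $\|y-x\|_1 \le |\Gamma| + (\text{diameter bound})$; since $\Gamma$ is $\ast$-connected of size $m$ and $A$ is enclosed by $\Gamma$, we get $\mathrm{diam}_\infty(A) \le m$ as well, so $y$ is one of at most $m$ vertices on the ray, each within $\ell^\infty$-distance $m$ of $x$. Thus $\Gamma$ is a $\ast$-connected set of size $m$ containing one of at most $m$ prescribed vertices.

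Finally, the count: the number of $\ast$-connected subsets of $\Z^d$ of size $m$ containing a fixed vertex $y$ is at most $(e\cdot\Delta)^{m}$ where $\Delta = 3^d - 1$ is the degree of $\Z^d_\infty$ — this is the classical "number of lattice animals / Peierls constant" bound, proved by fixing a spanning tree of $\Z^d_\infty[\Gamma]$, traversing it by a depth-first walk of length $2m$, and encoding each step by one of $\Delta$ choices, giving at most $\Delta^{2m}$ and after accounting for the Catalan-type overcounting at most $(c_d)^m$. Multiplying by the factor $m \le e^m$ for the choice of anchor vertex $y$ on the ray, and by the single bit for which side is $A$, yields a bound $\exp(c_\mathrm{pei} m)$ with $c_\mathrm{pei} = c_\mathrm{pei}(d)$.

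The main obstacle I anticipate is the reconstruction/anchoring step: carefully arguing that a hole-free block $A \ni x$ with $|\widetilde\partial_\mathrm{int} A| = m$ is uniquely recoverable from $\widetilde\partial_\mathrm{int} A$ plus $O(\log m)$ bits, and that $\widetilde\partial_\mathrm{int} A$ lies within bounded distance of $x$. The uniqueness needs that $\Z^d \setminus \Gamma$ has exactly one bounded $\ast$-connected (or $1$-connected) component when $\Gamma$ arises this way — which is where $A = \bar A$ (no holes) is essential, since with holes there could be several enclosed regions and the recovery would fail. The rest is a routine application of the standard animal-counting bound, for which I would simply cite or reprove the depth-first-search encoding.
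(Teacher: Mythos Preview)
Your proposal is correct and follows essentially the same approach as the paper: reconstruct $A$ from its $\ast$-connected interior boundary $\Gamma=\widetilde\partial_\mathrm{int}A$ (using $A=\bar A$), anchor $\Gamma$ near $x$, and then apply the standard depth-first-walk bound on $\ast$-connected lattice animals in $\Z^d_\infty$. The only difference is in the anchoring step: you use the $+e_1$-ray from $x$ together with the observation that $\mathrm{diam}_\infty(A)=\mathrm{diam}_\infty(\Gamma)\le m-1$ (since extremal-coordinate vertices of $A$ lie in $\Gamma$), yielding at most $m$ candidate anchors, whereas the paper uses a volume argument via the isoperimetric inequality ($|A|\le C_1 m^{d/(d-1)}$ forces $\Gamma$ to meet a ball of radius $Cm^{1/(d-1)}$ around $x$), yielding $O(m^{d/(d-1)})$ candidates. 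Both polynomial factors are absorbed into $\exp(c_\mathrm{pei}m)$, so this is a minor variation rather than a different route.
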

The proof relies on the fact that $\widetilde\partial_\mathrm{int} A$ is $\ast$-connected \cite[Lemma 2.1]{deuschel1996surface} (which would not hold if $A$ contained holes, or may not hold if one  replaces $\widetilde\partial_\mathrm{int} A$ by $\partial_\mathrm{int} A$).

In \eqref{eq:iso}, we would like to replace $\partial_{\mathrm{int}}A$ by $\partial_\mathrm{int}\bar A$ from \eqref{eq:closure} (enumeration of sets that are equal to their closure would allow us to use Peierls' argument). However, then the isoperimetric inequality \eqref{eq:iso} may not hold anymore if the total boundary size of the holes in $A$ is too large compared to $\partial_\mathrm{int}\bar A$, which could happen if $|\bar A|>3n/4$.
We define two types of blocks, based on the size of the closures of the blocks (that is, whether Claim \ref{claim:iso} applies to $\bar A$ or not), i.e.,
\begin{equation}
 \begin{aligned}
  \CA_{\mathrm{small}} & :=\{A\in\CA:
  |\bar A|\le 3n/4,\ \mbox{ and }\ \bar{A}=A
  \},                                                                                       \\
  \CA_{\mathrm{large}} & := \big\{A\in\CA: |\bar A|>3n/4,\ \mbox{ and }\  |A|\le n/2\big\}.
 \end{aligned}
 \label{eq:a-sets}
\end{equation}
Recall the notation $A_1, \dots A_b\in_1 \CA$ in Definition \ref{def:blocks} meaning that the blocks $A_1, \dots, A_b$ are $1$-disconnected.  We extend this notation also to  subsets of $\CA$. For each of the sets and $k\in\N$ we define an event, namely
\begin{equation}\label{eq:def-event-1-spanning} 
 \CE_1(b, \CG_n)  := \left\{\exists\,(A_i)_{i\le b}\in_1\CA_{\mathrm{small}} \,\, \middle|\,\,
\begin{aligned}
  &(\cup_i \widetilde\partial_\mathrm{int}A_i)  \not\sim_{\CG_n} (\Lambda_n\!\setminus\! \cup_i A_i),\\
  &|\!\cup_i\!A_i|\ge k, \\
                                               &\CH_{\CG_n}((A_i)_{i\le b})\mbox{ connected}                       
\end{aligned}
 \right\},                                      
\end{equation}
and
\begin{align}
 \CE_2(\CG_n)    & :=
 \{\exists\, A\in\CA_{\mathrm{large}}: A\not\sim_{\CG_n} \partial_\mathrm{ext}A\}.
 \label{eq:def-event-2-holes}
\end{align}
The following deterministic claim holds for any graph on vertices in $\Lambda_n$. It shows that the union of these events contains the event $\{|\CC_n^\sss{(2)} |\ge k\}$. In particular, the proof reveals why we could restrict to sets with $A=\bar A$ in the definition of $\CA_{\mathrm{small}}$ in \eqref{eq:a-sets}.
\begin{claim}\label{claim:second-events}
 Consider the graph $\CG_n$ from Definition \ref{def:lrp}, with $\CC_n^\sss{(2)}$ the second-largest component of $\CG_n$. Then
 \begin{equation}
  \big\{|\CC_n^\sss{(2)}|\ge k\big\} \subseteq \CE_2(\CG_n) \cup \Big(\bigcup_{b=1}^{\lfloor n/2\rfloor } \CE_1(b, \CG_n)\Big).\nonumber%
 \end{equation}
\end{claim}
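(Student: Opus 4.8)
The goal of Claim~\ref{claim:second-events} is to show that if the second-largest component $\CC_n^\sss{(2)}$ has at least $k$ vertices, then either it (or rather one of its sub-blocks) falls into $\CA_{\mathrm{large}}$ and is isolated from its exterior boundary inside $\Lambda_n$, triggering $\CE_2(\CG_n)$, or else all its blocks have small closures, triggering some $\CE_1(b,\CG_n)$. I would prove this by a deterministic case analysis on the vertex set $\CS$ of $\CC_n^\sss{(2)}$, using the unique block-decomposition of Claim~\ref{claim:unique}.

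\medskip

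\textbf{Setup and the two cases.} Let $\CS$ be the vertex set of $\CC_n^\sss{(2)}$, so $|\CS|\ge k$, and since $\CC_n^\sss{(2)}$ is only the second-largest component we have $|\CS|\le n/2$. By Claim~\ref{claim:unique}, write $\CS=A_1\cup\cdots\cup A_b$ as its unique decomposition into $1$-disconnected blocks with $b\le|\CS|\le \lfloor n/2\rfloor$, and the block graph $\CH_{\CG_n}((A_i)_{i\le b})$ is connected. Because $\CC_n^\sss{(2)}$ is a component of $\CG_n$, there is no $\CG_n$-edge from $\CS$ to $\Lambda_n\setminus\CS$; a fortiori there is no edge from $\bigcup_i A_i$ to $\Lambda_n\setminus\bigcup_i A_i$, and in particular none from $\partial_{\mathrm{ext}}A_i$ (which lies in $\Lambda_n\setminus\CS$ whenever $A_i$ is a separate block, using $1$-disconnectedness) to $A_i$. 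The dichotomy is: either every block $A_i$ satisfies $|\overline{A_i}|\le 3n/4$, or some block $A_{i_0}$ satisfies $|\overline{A_{i_0}}|>3n/4$. In the latter case, since $|A_{i_0}|\le|\CS|\le n/2$, we get $A_{i_0}\in\CA_{\mathrm{large}}$, and because no $\CG_n$-edge leaves $\CS\supseteq A_{i_0}$ — and $\partial_{\mathrm{ext}}A_{i_0}\subseteq\Lambda_n\setminus A_{i_0}$ — we must check that $\partial_{\mathrm{ext}}A_{i_0}$ is actually disjoint from $\CS$ or, if not, that the relevant edges are still forbidden. The clean way: $\partial_{\mathrm{ext}}A_{i_0}\cap\CS$ can only meet other blocks $A_j$, but those are $1$-disconnected from $A_{i_0}$, i.e.\ at $\|\cdot\|_1$-distance $>1$, contradicting membership in $\partial_{\mathrm{ext}}A_{i_0}$; hence $\partial_{\mathrm{ext}}A_{i_0}\subseteq\Lambda_n\setminus\CS$ and no edge joins it to $A_{i_0}$, so $A_{i_0}\not\sim_{\CG_n}\partial_{\mathrm{ext}}A_{i_0}$ and $\CE_2(\CG_n)$ occurs.

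\medskip

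\textbf{Passing to closures in the small case.} Now suppose $|\overline{A_i}|\le 3n/4$ for all $i$. Here is the point flagged in the text: we want to work with $\overline{A_i}$ rather than $A_i$ so that Peierls' argument applies later, so I need to argue that $(\overline{A_i})_{i\le b}$ is again a valid $1$-disconnected family of blocks, each in $\CA_{\mathrm{small}}$, still forming a connected component-like object with the right properties. First, each $\overline{A_i}$ is a block (its $\Z^d_1$-induced graph is connected — this is essentially part of what ``closure'' guarantees, or a small lemma), satisfies $\overline{\overline{A_i}}=\overline{A_i}$, and has $|\overline{A_i}|\le 3n/4$, hence $\overline{A_i}\in\CA_{\mathrm{small}}$. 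Next, $1$-disconnectedness of the closures: by Claim~\ref{claim:containment}(ii)--(iii), for any two of the original $1$-disconnected blocks, either one closure contains the other or the closures are $1$-disconnected. In the former case $\overline{A_i}\subseteq\overline{A_j}$ would mean $A_i$ sits inside a hole of $A_j$; I then replace the family by discarding $A_i$ (and all blocks swallowed this way), noting that the vertices of $A_i$ lie in $\overline{A_j}$, so the union of closures is unchanged and still contains $\CS$, hence still has size $\ge k$. After finitely many such merges we obtain a sub-family of closures that is genuinely $1$-disconnected. Finally, I must verify the two conditions in the definition of $\CE_1$: (a) $\big(\bigcup_i\widetilde\partial_{\mathrm{int}}\overline{A_i}\big)\not\sim_{\CG_n}\big(\Lambda_n\setminus\bigcup_i\overline{A_i}\big)$, and (b) the block graph on the closures is connected.

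\medskip

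\textbf{Verifying the two $\CE_1$-conditions; the main obstacle.} For (a): a vertex in $\widetilde\partial_{\mathrm{int}}\overline{A_i}$ lies in $\overline{A_i}$; I claim it actually lies in $A_i$ (not in a hole), because by Claim~\ref{claim:containment}(i), $\widetilde\partial_{\mathrm{int}}\overline{A_i}\subseteq\widetilde\partial_{\mathrm{int}}A_i\subseteq A_i$ — this is precisely why the ``$\bar A=A$'' restriction in $\CA_{\mathrm{small}}$ costs nothing. So any $\CG_n$-edge from $\bigcup_i\widetilde\partial_{\mathrm{int}}\overline{A_i}$ starts inside $\CS=\bigcup_i A_i$, hence (being within a component) cannot reach $\Lambda_n\setminus\CS\supseteq\Lambda_n\setminus\bigcup_i\overline{A_i}$; and it cannot reach a hole vertex of some $\overline{A_j}$ that is not in $\CS$ either, because... here is the delicate bit: a hole $H$ of $\overline{A_j}$ is surrounded by $A_j$, but could a $\CG_n$-vertex of $\CS$ connect into $H$ via a long edge? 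If some vertex of $H$ were connected by a $\CG_n$-edge to $\CS$, then $H$ would be part of the same component $\CC_n^\sss{(2)}$, so $H\cap(\text{another block }A_\ell)\neq\emptyset$; but then $A_\ell\subseteq\overline{A_j}$ contradicts the $1$-disconnectedness we already arranged — unless $A_\ell$ was discarded, in which case $H$'s vertices are accounted for inside $\overline{A_j}$ anyway and the edge does not go to $\Lambda_n\setminus\bigcup_i\overline{A_i}$. I expect reconciling hole-vertices, discarded blocks, and the $\not\sim$ condition to be the fiddliest part and the main obstacle — it needs to be organized so that every vertex of $\CS$ ends up inside $\bigcup_i\overline{A_i}$ and every vertex reachable by a $\CG_n$-edge from $\bigcup_i\widetilde\partial_{\mathrm{int}}\overline{A_i}$ is either in $\CS$ or inside some closure, never in $\Lambda_n\setminus\bigcup_i\overline{A_i}$. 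For (b): connectivity of $\CH_{\CG_n}((\overline{A_i})_{i})$ follows from connectivity of $\CH_{\CG_n}((A_i)_{i\le b})$ from Claim~\ref{claim:unique} — every $\CG_n$-edge between $A_i$ and $A_j$ is still an edge between $\overline{A_i}\supseteq A_i$ and $\overline{A_j}\supseteq A_j$ — together with the observation that merging a swallowed block $A_i$ into $\overline{A_j}$ only contracts an already-connected graph. Since $b\le\lfloor n/2\rfloor$ and the reduced family has at most $b$ blocks, $\CE_1(b',\CG_n)$ occurs for the resulting $b'\le\lfloor n/2\rfloor$, which lies in the union on the right-hand side. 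This completes the case analysis and hence the proof.
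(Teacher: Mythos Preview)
Your proposal is correct and follows essentially the same route as the paper: decompose a component of size in $[k,n/2]$ into blocks via Claim~\ref{claim:unique}, split into the case where some block lies in $\CA_{\mathrm{large}}$ (giving $\CE_2$) versus all closures small, and in the latter case pass to the maximal closures using Claim~\ref{claim:containment}(ii)--(iii) and verify the $\CE_1$-conditions via Claim~\ref{claim:containment}(i).

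The one place where you make your own life harder than necessary is the verification of condition~(a). Your digression about edges into hole vertices is a non-issue: hole vertices of $\overline{A_j}$ lie \emph{inside} $\bigcup_i\overline{A_i}$ by definition, so they are not in the target set $\Lambda_n\setminus\bigcup_i\overline{A_i}$, and the $\not\sim$ condition in $\CE_1$ says nothing about them. Once you have $\widetilde\partial_{\mathrm{int}}\overline{A_i}\subseteq A_i\subseteq\CS$ (from Claim~\ref{claim:containment}(i)) and $\Lambda_n\setminus\bigcup_i\overline{A_i}\subseteq\Lambda_n\setminus\CS$ (from $\CS\subseteq\bigcup_i\overline{A_i}$), the disconnection is immediate because $\CS$ is a component; this is exactly the one-line argument the paper gives for~\eqref{eq:inc-disconn}.
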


\begin{proof}[Proof of Claim \ref{claim:second-events}]
 Clearly $\big\{|\CC_n^\sss{(2)}|\ge k\big\}\subseteq \{\exists \mbox{ a component $\CC$ of } \CG_n: |\CC| \in [k, \lfloor n/2\rfloor]  \}$.  The size restriction $n/2$ is possible since otherwise $\CC_n^{\sss{(2)}}$ would be the largest component. Take any such $\CC$.
 We use Claim \ref{claim:unique}  to first uniquely decompose $\CC$ into $1$-disconnected (hence disjoint) blocks $A_1, \dots, A_{b}$ for some $b\ge 1$.
 Since $k\le |\CC|\le n/2$, $|A_i|\le n/2$ also holds for all $i\le b$, and also $b\le n/2$, and $\sum_{i\le b} |A_i|\ge k$.

 We distinguish two cases. Either (1) there is at least one block that is in $\CA_{\mathrm{large}}$ or (2) all the blocks are in $\CA\setminus \CA_{\mathrm{large}}$. In the first case
 the event $\CE_2$ in \eqref{eq:def-event-2-holes} holds:  a block of $\CC$ satisfying $\CA_{\mathrm{large}}$ (say $A$) is by definition $1$-disconnected from the other blocks of $\CC$, and since $\CC$ is a component of $\CG_n$, $A$ is $\CG_n$-disconnected from its exterior boundary, hence $\CE_2(\CG_n)$ defined in \eqref{eq:def-event-2-holes} holds.

 In case (2) all the blocks $(A_i)_{i\le b}$ are in $\CA\setminus \CA_{\mathrm{large}}$, and, since they form the component $\CC$, the graph $\CG_n$ spanned on $\cup_{i\le b}A_i$ is  connected, while their union is disconnected in $\CG_n$ from the rest of the graph. Finally their disjointness and $|\CC|\in[k, n/2]$ ensure that $|\!\cup_{i\le b}\! A_i|\in[k,n/2]$. Formally we describe this event as (changing the sets to be denoted by $B_i$ to avoid clash of notation later):
 \begin{equation*}
  \widetilde{\CE}_1(b, \CG_n) \!:=\! \left\{\exists (B_i)_{i\le b}\in_1\CA\!\setminus\! \CA_{\mathrm{large}}\,\,\middle|\,\, 
   \begin{aligned}
   &\CG_n[\cup_{i\le b} B_i]\mbox{ connected},\\
   &|\!\cup_{i\le b}\! B_i|\in[k,n/2], \\                                                                           
                        &(\cup_{i\le b}B_i)\not\sim_{\CG_n} (\Lambda_n\!\setminus\! \cup_{i\le b} B_i)
                        \end{aligned}
  \right\}.\nonumber
 \end{equation*}
 Taking a union over the number of blocks and combining the two cases, we arrive at
 \[ \big\{|\CC_n^\sss{(2)}|\ge k\big\}\subseteq \CE_2(\CG_n) \cup \big(\cup_{b=1}^{\lfloor n/2\rfloor } \widetilde \CE_1(b, \CG_n)\big). \]
Take now any graph $G_n$ on the vertex set $\Z\cap \Lambda_n$. Think of this as the realization of $\CG_n$. We will show the implication that
  \begin{equation}\label{eq:tilde-vs-nontilde}
 \Big\{ \bigcup_{b=1}^{\lfloor n/2 \rfloor} \widetilde\CE_1(b, G_n) \text{ holds for $G_n$} \Big\} \Longrightarrow \Big\{  \bigcup_{b'=1}^{\lfloor n/2 \rfloor} \CE_1(b', G_n) \text{ holds for $G_n$} \Big\}.
 \end{equation}
Given $G_n$ for which $\bigcup_{b=1}^{\lfloor n/2 \rfloor} \widetilde\CE_1(b, G_n)$ holds, take now $b$ and the $1$-disconnected blocks $(B_1, \dots, B_b)$ for which $\widetilde \CE_1(b, G_n)$ holds in the union.
 The conditions of Claim \ref{claim:containment} are satisfied for any pair $B_i, B_j$ with $i\neq j$, hence  for each pair, $\bar B_i$ and $ \bar B_j$ are either $1$-disconnected disjoint sets, or one contains fully the other one.  Choose now those sets in $\{\bar B_1,\ldots, \bar B_b\}$ that are not contained in any other set in the same list.
 We then obtain an integer $b'\le b$ and a $1$-disconnected subset $\{\bar B_{i_1},\ldots, \bar B_{i_{b'}}\}\subseteq\{\bar B_1,\ldots, \bar B_b\}$ such that
 \begin{equation}
  \bigcup_{i=1}^bB_i\, \subseteq\, \bigcup_{i=1}^b\bar B_i\,= \,\bigcup_{j=1}^{b'}\bar B_{i_j}.\label{eq:subset-blocks}
 \end{equation}
 Since  $(\bar B_1,\ldots, \bar B_{b'})$ is a $1$-disconnected sequence of blocks that are equal to their own closure, \eqref{eq:tilde-vs-nontilde} follows  if $\CE_1(b', G_n)$ holds. For this, we use that $B_1, \dots B_b$ satisfies $\widetilde \CE_1(b,G_n)$ by assumption, so when checking \eqref{eq:def-event-1-spanning} for $(\bar B_1,\ldots, \bar B_{b'})$, it is sufficient to  show that
 \begin{align}
  \{  G_n[\cup_{i\le b}B_i]\mbox{ connected}\}
   & \subseteq
  \{
  \CH_{G_n}(( \bar B_i)_{i\le b'})\mbox{ connected}
  \}, \label{eq:inc-conn} \\
  \{
  |\cup_{i\le b}B_i|\in[k, n/2]
  \}
   & \subseteq
  \{
  |\cup_{i\le b}\bar B_i|\ge k
  \}, \label{eq:inc-size} \\
  \{(\cup_{i\le b}B_i)\not\sim_{G_n}(\Lambda_n\setminus\cup_{i\le b}\bar B_i) \}
   & \subseteq
  \{(\cup_{i\le b'}\widetilde\partial_\mathrm{int}\bar B_i)\not\sim_{G_n}(\Lambda_n\setminus\cup_{i\le b'}\bar B_i) \},\label{eq:inc-disconn}
 \end{align}
 since then all conditions for $\CE_1(b', G_n)$ defined in \eqref{eq:def-event-1-spanning} are satisfied by setting $A_i=\bar B_i$ for all $i\le b'$.
For the first inclusion \eqref{eq:inc-conn} we observe that
 \begin{equation}
   \big\{G_n[\cup_{i\le b} B_i]  \mbox{ connected}\big\} \subseteq \big\{\CH_{G_n}((B_i)_{i\le b})\mbox{ connected}\big\}
   \subseteq\big\{\CH_{G_n}((\bar B_i)_{i\le b'})\mbox{ connected}\big\},
\nonumber%
 \end{equation}
 since the left-hand side was assumed in $\widetilde \CE_1(b, G_n)$, and the block graph being connected is a less demanding event than the actual spanned graph being connected,  and the second containment follows since each set of edges in $G_n$ that ensures that $\CH_{\CG_n}((B_i)_{i\le b})$ is connected, also ensures that the block graph on the closures of $(B_i)_{i\le b}$ is connected.

 The second inclusion \eqref{eq:inc-size} follows by using $\cup_{i\le b} B_i\subseteq \cup_{i\le b'} \bar B_i$ in \eqref{eq:subset-blocks}. For the third inclusion \eqref{eq:inc-disconn} we have to argue that the set of edges that is excluded on the right-hand side is contained in the set of excluded edges on the left-hand side. Clearly $(\Lambda_n\setminus\cup_{i\le b}B_i)\supseteq (\Lambda_n\setminus\cup_{i\le b'}\bar B_i)$, and by part (i) in Claim \ref{claim:containment} it follows that $\partial_\mathrm{int}\bar B_i\subseteq B_i$.
\end{proof}

We state two lemmas that together with Claim \ref{claim:second-events} prove Proposition \ref{prop:second-largest}. We prove the two lemmas in the following sections.
\begin{lemma}[Unlikely block graphs]\label{lemma:spanning-tree} Let $\CG_n$ be long-range percolation on $\Lambda_n$ as in Definition \ref{def:lrp} with $d\ge 2$, $\alpha> 1+1/d$. 
There exists a constant $c_{\ref{lemma:spanning-tree}}=c_{\ref{lemma:spanning-tree}}(d,\alpha)>0$ such that for all $k, n$ sufficiently large
   \begin{equation}\label{eq:unlikely-block-graphs}
  \Prob\Big(\bigcup_{b=1}^{\lfloor n/2\rfloor } \CE_1(b, \CG_n)\Big) 
  \le
      (n\log n)\cdot\Big(p\beta^{d\alpha}\cdot\big(1-p(1\wedge\beta)^{d\alpha}\big)^{(\beta\vee1) c_{\ref{lemma:spanning-tree}}}\Big)^{k^{(d-1)/d}}
      \end{equation}
 whenever $p$ and $\beta$ guarantee that the base of $k^{(d-1)/d}$ is sufficiently small.
\end{lemma}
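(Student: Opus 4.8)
The strategy is a union bound over $b$ combined with a first-moment (expected-number-of-configurations) estimate for each fixed $b$. Fix $b$ and enumerate candidate configurations $(A_i)_{i\le b} \in_1 \CA_{\mathrm{small}}$ witnessing $\CE_1(b,\CG_n)$. For such a configuration, write $m_i := |\widetilde\partial_{\mathrm{int}} A_i|$ for the interior-boundary size of block $A_i$ and $m := \sum_{i\le b} m_i$ for the total boundary. I would first bound the combinatorial factor: the number of ways to choose the blocks. Each block $A_i$ has $A_i = \bar A_i$, so by Lemma \ref{lemma:peierl} (Peierls), once an anchor vertex $x_i \in A_i$ is fixed, there are at most $\exp(c_{\mathrm{pei}} m_i)$ choices for $A_i$. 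Summing the anchor positions over $\Lambda_n$ and the block graph $\CH_{\CG_n}$ being connected: by Cayley's formula there are at most $b^{b-2}$ spanning trees on $b$ vertices, and I would absorb the choice of which block is "first" and the tree structure, so that the number of configurations with prescribed $(m_i)$ is at most roughly $n \cdot \prod_i \exp(c_{\mathrm{pei}} m_i) \cdot (\text{tree factor})$ — crucially, since each block is $1$-connected with boundary $m_i$, we have $|A_i| \le C m_i^{d/(d-1)}$ (isoperimetry, Claim \ref{claim:iso}), so $b \le m$ and the tree factor $b^{b-2} \le e^{O(m)}$ after noting $b \le m$; and the number of ways to split $m$ into $(m_i)_{i\le b}$ is at most $2^m$. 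All of this gives a combinatorial factor $n \cdot e^{Cm}$ for some constant $C = C(d)$.

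Next I would bound the probability of each configuration. Two independent families of edges must be controlled. First, the block graph $\CH_{\CG_n}((A_i)_{i\le b})$ must be connected: fix a spanning tree on $\{1,\dots,b\}$; for each tree edge $\{i,j\}$, there must exist at least one $\CG_n$-edge between $A_i$ and $A_j$, which has probability at most $\sum_{x\in A_i, y\in A_j} \mathrm{p}(\|x-y\|) \le p\beta^{d\alpha} \sum_{x\in A_i,y\in A_j}\|x-y\|^{-d\alpha}$ (using $\alpha>1$ so this is summable-ish); since $\alpha > 1+1/d$, the integrability condition \eqref{eq:int-cond} holds, and one can show $\sum_{y: \|y-x\|>\beta} \mathrm{p}(\|x-y\|) = O(p\beta^{d\alpha})$ per vertex $x$, so summing over the tree and over spanning trees contributes a factor $(p\beta^{d\alpha})^{b-1}$ times a combinatorial tree factor already counted — this is where the $p\beta^{d\alpha}$ in the base of $k^{(d-1)/d}$ comes from, using $b-1 \ge$ (something proportional to the number of blocks), but more carefully one pairs each long connecting edge against boundary that it "uses up." Second, and this is the main gain, the configuration must be \emph{isolated}: no $\CG_n$-edge from $\cup_i \widetilde\partial_{\mathrm{int}} A_i$ to $\Lambda_n \setminus \cup_i A_i$. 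In particular, for each of the $\ge \delta m$ (by Claim \ref{claim:iso}, actually $m$ itself is the boundary) boundary vertices, its nearest-neighbor edges pointing out of $\cup_i A_i$ must be absent; since a positive fraction of boundary vertices have such an outgoing nearest-neighbor edge available (here one uses that the blocks are small, $|\bar A_i| \le 3n/4$, so genuine exterior boundary exists, together with $\ast$-connectedness of $\widetilde\partial_{\mathrm{int}}$ from Claim \ref{claim:containment}(iv)), each such edge is absent with probability $1 - p(1\wedge\beta)^{d\alpha}$. Counting these carefully against the $\beta$-scale (a short edge of length up to $\beta$ is present with probability $p(1\wedge\beta)^{d\alpha}$, and along the boundary one can find $\asymp (\beta\vee 1) \cdot m$ disjoint such potential edges — or at least $(\beta\vee 1)c_{\ref{lemma:spanning-tree}} \cdot m$ of them) yields a factor $(1 - p(1\wedge\beta)^{d\alpha})^{(\beta\vee1)c_{\ref{lemma:spanning-tree}} m}$.

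Combining: for fixed $b$ and fixed $(m_i)$, $\Prob \le n \cdot \big(e^{C} \cdot p\beta^{d\alpha} \cdot (1-p(1\wedge\beta)^{d\alpha})^{(\beta\vee1)c_{\ref{lemma:spanning-tree}}}\big)^{m}$ after absorbing $e^{Cm}$ into the base; then sum over the at most $m$ values of $b$, the at most $2^m$ compositions $(m_i)$, and over $m \ge \delta k^{(d-1)/d}$ (since $|\cup_i A_i| \ge k$ forces, via isoperimetry applied blockwise and subadditivity of $t\mapsto t^{(d-1)/d}$... actually the reverse: one needs $\sum m_i \ge \delta k^{(d-1)/d}$, which follows because $\sum_i |A_i| \ge k$ and $|A_i|\le C m_i^{d/(d-1)}$ gives $\sum m_i \ge (k/C)^{(d-1)/d}$ after using $\sum m_i^{d/(d-1)} \ge (\sum m_i)^{d/(d-1)}/\text{stuff}$ — more simply, $k \le \sum|A_i| \le C\sum m_i^{d/(d-1)}$ and since each $m_i \le m$, $\sum m_i^{d/(d-1)} \le m^{1/(d-1)}\sum m_i = m^{1/(d-1)} m = m^{d/(d-1)}$, hence $m \ge (k/C)^{(d-1)/d}$). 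The geometric series over $m$ converges provided the base is $< 1/2$, losing only a constant factor, and replacing $m$ by its minimum $\asymp k^{(d-1)/d}$ in the exponent gives \eqref{eq:unlikely-block-graphs}, with the $(n\log n)$ prefactor coming from $n$ times the polynomial-in-$k$ (hence $\le \log n$, using $k \le n$) losses from the summations.

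\textbf{Main obstacle.} The delicate point is the isolation estimate: showing that the total interior boundary $m$ of the (possibly delocalized, multi-block) set really forces $\asymp (\beta\vee 1)\cdot m$ \emph{independent} absent edges, each of probability $1-p(1\wedge\beta)^{d\alpha}$, \emph{without} double-counting edges shared between the block-connection requirement and the isolation requirement, and while correctly handling blocks that touch $\widetilde\partial_{\mathrm{int}}\Lambda_n$ (where $\partial_{\mathrm{ext}}$ can be small). The bookkeeping that separates "edges used to connect blocks" (contributing $p\beta^{d\alpha}$ per block) from "edges forced absent for isolation" (contributing the $(1-p(1\wedge\beta)^{d\alpha})^{(\beta\vee 1)c}$ factor), and verifying that the $\ast$-connectedness of $\widetilde\partial_{\mathrm{int}}\bar B$ (Claim \ref{claim:containment}(iv)) gives enough disjoint short potential edges scaling with $\beta$, is where the real work lies; everything else is standard first-moment / Peierls bookkeeping.
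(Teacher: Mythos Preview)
Your overall architecture (union bound over $b$, Peierls enumeration of blocks, spanning-tree bound for connectedness, isolation estimate for disconnection) matches the paper's, and your treatment of the isolation part and the isoperimetric lower bound $m\ge c\,k^{(d-1)/d}$ are essentially correct. However, the spanning-tree/combinatorial step contains a genuine gap.

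You claim that the tree factor $b^{b-2}\le e^{O(m)}$ once $b\le m$. This is false: when $b$ is comparable to $m$ (e.g.\ all blocks singletons, $m_i=1$), one has $b^{b-2}=e^{(b-2)\log b}=e^{\Theta(m\log m)}$, not $e^{O(m)}$. Relatedly, your per-vertex bound $\sum_{y}\mathrm p(\|x-y\|)=O(p\beta^{d\alpha})$ uses only $\alpha>1$ and, when summed over $x\in A_\ell$ to bound $\Prob(A_\ell\sim A_b)$, introduces a \emph{volume} factor $|A_\ell|\sim m_\ell^{d/(d-1)}$. Carrying these volume factors through all tree edges and summing over trees yields a total combinatorial weight of order $e^{\Theta(m\log m)}$, which the isolation factor $(1-p(1\wedge\beta)^{d\alpha})^{(\beta\vee1)cm}$ cannot beat for fixed $p,\beta$.

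The paper resolves this with two ingredients you are missing. First, a geometric crossing estimate (their Claim~3.4): for any hole-free block $A$,
\[
\big|\{(x,y):x\in A,\ y\notin A,\ \|x-y\|\in(r,r+1]\}\big|\le C r^d\,|\widetilde\partial_{\mathrm{int}}A|,
\]
so that $\sum_{A_b}\Prob(A_\ell\sim A_b)\le C p\beta^{d\alpha}\, m_\ell\, e^{c_{\mathrm{pei}}m_b}\sum_{r\ge1} r^{d(1-\alpha)}$, which converges \emph{precisely} because $\alpha>1+1/d$; this replaces your volume factor $|A_\ell|$ by the boundary $m_\ell$. Second, rather than a raw Cayley count, the paper uses an $\mathbf f$-tree decomposition with a $\prod_i 1/f_i!$ correction (their Statement~3.2), so that the sum over tree shapes and over $(m_i)$ becomes essentially $\sum_{\mathbf f}\prod_i m_i^{f_i}/f_i!\le m^{b-1}/(b-1)!\le e^{m}$. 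These two ingredients together give the honest $e^{O(m)}$ combinatorial factor needed; without them your argument does not close.
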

We give a brief intuition on the powers occurring on the right-hand side of \eqref{eq:unlikely-block-graphs}. When $\beta\ge 1$ in \eqref{eq:unlikely-block-graphs}, the factor $(1-p)^\beta$ comes from the fact that any vertex within $\beta$ distance from the boundary of total size at least $k^{(d-1)/d}$ of the blocks $\cup_{i\le b}A_i$ in \eqref{eq:def-event-1-spanning} must be $\CG_n$-disconnected from $\cup_{i\le b}A_i$. An edge between two vertices within distance $\beta$ is absent with probability $1-p$ by \eqref{eq:connection-prob-gen}. In this case $p\beta^{d\alpha }\ge 1$ is possible; this factor arises from counting the number of possible spanning trees on the blocks. When $\beta<1$, we only exclude edges of length $1$ around the boundary of $\cup_{i\le b}A_i$, and the factor $p\beta^{d\alpha}$ is at most $1$.

We proceed to the lemma dealing with $\CE_2(\CG_n)$.
Define 
\begin{equation}\label{eq:fpbeta}
    f(p,\beta):=
    \begin{dcases}
        1-p(1\wedge\beta)^{d\alpha},&\text{if }\beta<2\sqrt{d},\\
        (1-p)^{(\log_2\beta)^{-2}\beta^{(d-2)/(d-1)}},&\text{if }\beta\ge 2\sqrt{d}.
    \end{dcases}
\end{equation}

The function $f(p,\beta)$ describes how large $p$ and $\beta$ should be in the next lemma. It is a technical artifact of the proof that we did not optimize. We write $x\vee y=\max(x,y)$.
\begin{lemma}[No large isolated component]\label{lemma:holes}
 Let $\CG_n$ be long-range percolation on $\Lambda_n$ as in Definition \ref{def:lrp} with $d\ge 2$, $\alpha> 1+1/d$.  There exists a constant $c_{\ref{lemma:holes}}=c_{\ref{lemma:holes}}(d)>0$ such that for all  $n$ sufficiently large
 \begin{equation}\label{eq:no-large-holes}
  \Prob\big(\CE_2(\CG_n)\big)\le \big(1-p(1\wedge\beta)^{d\alpha}\big)^{(\beta\vee 1)\cdot c_{\ref{lemma:holes}} n^{(d-1)/d}(\log n)^{-2}}
 \end{equation}
 whenever $f(p,\beta)$ is sufficiently small.
\end{lemma}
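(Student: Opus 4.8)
The plan is to bound $\Prob(\CE_2(\CG_n))$ by enumerating candidate blocks $A\in\CA_{\mathrm{large}}$ and, for each, bounding the probability that $A\not\sim_{\CG_n}\partial_\mathrm{ext}A$. The naive union bound over all $A\in\CA_{\mathrm{large}}$ fails because $|A|$ can be as large as $n/2$ and there are far too many such blocks; as the proof sketch in Section~\ref{sec:sketch} indicates, the fix is to work with the \emph{holes} of $A$ rather than $A$ itself. Since $A\in\CA_{\mathrm{large}}$ means $|\bar A|>3n/4$ but $|A|\le n/2$, the complement $\bar A\setminus A$ — the union of the holes $\mathfrak H_A$ — has cardinality at least $n/4$. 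So I would first argue that at least one hole $H\in\mathfrak H_A$ is ``not too small'' relative to $n$ (either a single hole of size $\Omega(n)$, or we aggregate; in any case the total hole volume is $\ge n/4$), and that $H$ and its boundaries satisfy $H=\bar H$ by Claim~\ref{claim:containment}(v), so Peierls' Lemma~\ref{lemma:peierl} applies to holes. The key point is that if $A\not\sim_{\CG_n}\partial_\mathrm{ext}A$ then in particular no edge joins $A$ to any vertex it surrounds that lies in $\partial_\mathrm{ext}A$-reachable position — more usefully, the edges between $\widetilde\partial_\mathrm{int}H$ (which lies in $A$) and $\widetilde\partial_\mathrm{ext}H$ (inside the hole, hence in $\bar A\subseteq\Lambda_n$) must all be absent, plus, when $\beta\ge1$, \emph{all} edges of length $\le\beta$ across the hole boundary must be absent. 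Since by Claim~\ref{claim:iso}$(\star)$ we have $|\widetilde\partial_\mathrm{int}H|\ge\delta|H|^{(d-1)/d}$, and for $\beta\ge 1$ each boundary vertex has $\Theta(\beta^{d})$ potential short edges crossing outward of which $\Omega(\beta^{(d-2)/(d-1)})$-many (after the combinatorial dilution accounting for overlaps, hence the $(\log_2\beta)^{-2}$ correction) point to distinct partner vertices, one gets a factor $(1-p(1\wedge\beta)^{d\alpha})^{(\beta\vee1)\,c\,|H|^{(d-1)/d}}$ for the disconnection probability of a fixed hole $H$.

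Next I would set up the enumeration. Group candidate large blocks $A$ by a distinguished large hole $H=H(A)$. Two different $A,A'$ sharing the same $H$ still need to be counted, but here the crucial observation (as in the sketch: ``group potential large components such that a large number of holes coincide'') is that $H$ is surrounded by $A$, so $A\supseteq\widetilde\partial_\mathrm{int}H$, and the disconnection event only depends on edges incident to $\widetilde\partial_\mathrm{int}H$ pointing into $H$; these are determined by $H$ alone. Thus
$$
\Prob(\CE_2(\CG_n))\le \sum_{H}\Prob\big(\text{no edge from }\widetilde\partial_\mathrm{int}H\text{ into }\widetilde\partial_\mathrm{ext}H\text{-region}\big),
$$
the sum over all possible holes $H$ with $|H|\ge cn$ (for an appropriate constant; smaller holes are handled by noting their \emph{total} volume is $\ge n/4$, so at least one has volume $\ge (n/4)/|\mathfrak H_A|$ and one controls the number of holes too, or one simply takes the single largest hole which has volume $\ge n/(4\cdot\text{something})$ — I would check that the largest hole always has volume $\ge \Omega(n)$: indeed if every hole had volume $<\varepsilon n$ then... actually this needs care, see obstacle below). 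Using Peierls (Lemma~\ref{lemma:peierl}) rooted at each possible vertex, the number of holes $H$ with $|\widetilde\partial_\mathrm{int}H|=m$ and containing a fixed vertex is $\le e^{c_\mathrm{pei}m}$, and $m$ ranges over $[\delta(cn)^{(d-1)/d},\infty)$; multiplying by $n$ choices of root, $\sum_H\le \sum_{x\in\Lambda_n}\sum_{m\ge \delta(cn)^{(d-1)/d}}e^{c_\mathrm{pei}m}(1-p(1\wedge\beta)^{d\alpha})^{(\beta\vee1)c' m}$, which is a convergent geometric series dominated by its first term once $f(p,\beta)$ is small enough that $e^{c_\mathrm{pei}}f(p,\beta)^{c'}<1$ (this is exactly what ``$f(p,\beta)$ sufficiently small'' buys us, with the precise form of $f$ in \eqref{eq:fpbeta} matching the exponent $(\beta\vee1)\beta^{(d-2)/(d-1)}(\log_2\beta)^{-2}$ per boundary vertex). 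This yields the bound $\le n\cdot e^{c_\mathrm{pei}\delta(cn)^{(d-1)/d}}f(p,\beta)^{(\beta\vee1)c''n^{(d-1)/d}}$, and absorbing the polynomial prefactor and the $e^{c_\mathrm{pei}\cdots}$ into the stretched-exponential (legitimate since the base is $<1$ and we lose only a constant in the exponent, and $(\log n)^{-2}$ absorbs the log-corrections from the $\beta$-dilution when $\beta$ is allowed to grow with... no, $\beta$ is fixed — the $(\log n)^{-2}$ must come from elsewhere, see below) gives \eqref{eq:no-large-holes}.

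The main obstacle I anticipate is twofold. First, the \emph{geometry of holes}: I need a clean deterministic statement that $A\in\CA_{\mathrm{large}}$ forces the existence of a hole $H$ with $|H|^{(d-1)/d}\ge c\, n^{(d-1)/d}(\log n)^{-2}$ — in particular I must rule out the scenario where $\bar A\setminus A$ is split into \emph{many} tiny holes whose individual boundaries are small; this is presumably where the $(\log n)^{-2}$ factor in \eqref{eq:no-large-holes} originates (one caps the number of relevant holes at $O((\log n)^{\text{const}})$ or shows the largest hole has volume $\ge n^{1-o(1)}$, trading polynomial loss for the log-power), and getting the bookkeeping right — which holes to use, how Claim~\ref{claim:containment}(iv)--(v) guarantee $\ast$-connectedness so Peierls applies, how the holes of a \emph{single} $A$ interact — is the delicate part. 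Second, for $\beta\ge2\sqrt d$ the per-vertex edge-deletion gain is not the full $\beta^{d-1}$ (which would be too optimistic because of double-counting of shared endpoints) but only $\beta^{(d-2)/(d-1)}/(\log_2\beta)^2$; deriving exactly this requires an isoperimetric/packing argument inside the boundary region showing that among the $\widetilde\partial_\mathrm{int}H$ vertices one can select a $\Omega(|\widetilde\partial_\mathrm{int}H|\cdot\beta^{-(1/(d-1))})$-sized subset whose $\beta$-neighborhoods into the hole are disjoint, each contributing an independent factor $(1-p(1\wedge\beta)^{d\alpha})^{\Theta(\beta^{d})}$ — balancing the number of selected vertices against edges-per-vertex to optimize the exponent $\beta^{d}\cdot\beta^{-1/(d-1)}=\beta^{(d^2-d-1)/(d-1)}$... hmm, this does not obviously equal $\beta^{(d-2)/(d-1)}$, so the actual argument must select vertices more sparsely and only use $O(1)$ edges each, or use a layered sum over distances $1,\dots,\beta$; I would follow whichever dilution the companion paper~\cite{clusterI} uses and simply verify the resulting exponent matches $f$ in \eqref{eq:fpbeta}. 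Modulo these two technical points — the hole-geometry lemma and the $\beta$-dilution counting — the structure above (Peierls enumeration of holes + geometric-series summation powered by smallness of $f(p,\beta)$) is the complete skeleton of the proof.
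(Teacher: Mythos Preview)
Your skeleton is right in spirit but the central combinatorial step is missing, and the ``obstacle'' you flag is in fact the heart of the proof. The deterministic statement you hope for---that $A\in\CA_{\mathrm{large}}$ has a single hole $H$ with $|H|^{(d-1)/d}\ge c\,n^{(d-1)/d}(\log n)^{-2}$---is false: take $A$ with $\Theta(n^{1/d})$ holes each of volume $\Theta(n^{(d-1)/d})$; total hole volume is $\Theta(n)$ but the largest hole has boundary only $\Theta(n^{(d-1)^2/d^2})\ll n^{(d-1)/d}(\log n)^{-2}$. So grouping by one distinguished hole cannot yield the right exponent, while grouping by the full hole collection explodes combinatorially. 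The paper's fix is a pigeonhole on hole \emph{sizes}: bin holes into dyadic types $i$ with $|H|\in(2^{i-1},2^i]$; since the total hole volume is $\ge n/4$ and $\sum_{i\ge1} i^{-2}<2$, some ``principal'' type $i_A\le\lceil\log_2 n\rceil$ carries at least $h_n(i_A):=2^{-i_A-3}i_A^{-2}n$ holes. Their combined boundary is $\ge c\,i_A^{-2}2^{-i_A/d}n$, which at the worst case $i_A=\lceil\log_2 n\rceil$ is $\gtrsim n^{(d-1)/d}(\log n)^{-2}$---this is the origin of the $(\log n)^{-2}$. The grouping is then by the \emph{first $\lceil h_n(i)\rceil$ holes of the principal type}: map $A\mapsto D_i(A):=\Lambda_n\setminus(\text{those holes})$; blocks with the same $D$ exclude the same edges, and $D$ is encoded by $\lceil h_n(i)\rceil$ rooted Peierls-enumerated holes, a count that is beaten by the disconnection probability once $f(p,\beta)$ is small.

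Two smaller corrections. You have the hole boundaries reversed: $\widetilde\partial_{\mathrm{int}}H\subseteq H\subseteq\partial_{\mathrm{ext}}A$ lies \emph{outside} $A$, while $\widetilde\partial_{\mathrm{ext}}H\subseteq A$; see \eqref{eq:boundary-in-box-identity}. And the $\beta$-gain for large principal types is simpler than you fear: from each vertex of $\partial_{\mathrm{int}}H$ walk $R_2:=\lfloor\beta/\sqrt d\rfloor\vee1$ steps along the $\ast$-connected set $\partial_{\mathrm{ext}}H\subseteq A$ (Claim~\ref{claim:containment}(v)), giving $R_2$ excluded edges per vertex---no packing argument is needed. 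The exponent $\beta^{(d-2)/(d-1)}(\log_2\beta)^{-2}$ in $f(p,\beta)$ arises only in the small-principal-type regime $i<i_\star(\beta)$, where the paper uses a crude $2^n$ count on $\CA_{\mathrm{large}}(i)$ and compensates with the $\Theta(n)$ total hole-boundary available there; that term contributes an error exponentially small in $n$, not $n^{(d-1)/d}$.
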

  To prove \eqref{eq:no-large-holes}, we use that any set $A$ with size below $n/2$ but closure $\bar A$ with size above $3n/4$ must have boundary at least $\Theta(n^{(d-1)/d})$, and large holes to which it cannot be connected by an edge in $\CG_n$. The $(\log n)^{-2}$ factor is a technical artifact of our proof based on the pigeon-hole principle. In both Lemma~\ref{lemma:spanning-tree} and~\ref{lemma:holes}, the challenge is that a simple union bound would yield too large combinatorial factors arising from the possibilities for the sets $(A_i)_{i\le b}$ and $A$, respectively. See also Section \ref{sec:sketch}.

\section{Spanning trees on block graphs}\label{sec:spanning}
We work towards proving Lemma \ref{lemma:spanning-tree}. Recall the event $\CE_1(b):=\CE_1(b, \CG_n)$ from \eqref{eq:def-event-1-spanning}, and $\CA_{\mathrm{small}}$ for the blocks in $\Lambda_n$ from \eqref{eq:a-sets} on which there should be a connected block graph $\CH_{\CG_n}((A_i)_{i\le b})$ (see Definition \ref{def:blockgraph}).
By a union bound over all possible $1$-disconnected sequences of blocks $(A_1,\ldots, A_b)\in_1\CA_\mathrm{small}$ whose total size is at least $k$, we obtain
\begin{equation*}
 \Prob\big(\CE_1(b)\big)
 \le
 \frac{1}{b!}\sum_{(A_i)_{i\le b}\in_1\CA_\mathrm{small}} \hspace{-15pt}\Prob\Big(
 \CH_{\CG_n}((A_i)_{i\le b})\mbox{ is connected},\ (\cup_{i\le b} \widetilde\partial_\mathrm{int}A_i)\not\sim_{\CG_n} (\Lambda_n\!\setminus\! \cup_{i\le b} A_i)
 \Big),
\end{equation*}
where the factor $1/b!$ corrects for the permutations of $(A_1,\ldots, A_b)$ yielding the same blocks, but ordered differently.
Using the independence of edges in long-range percolation in Definition \ref{def:lrp}, we obtain
\begin{equation}\label{eq:explain-factorial-1}
 \Prob\big(\CE_1(b)\big)
 \le
 \frac{1}{b!}\sum_{(A_i)_{i\le b}\in_1\CA_\mathrm{small}} \hspace{-15pt} \Prob\big(
 \CH_{\CG_n}((A_i)_{i\le b})\mbox{ is connected}\big)\cdot\Prob\big((\cup_{i\le b} \widetilde\partial_\mathrm{int}A_i)\not\sim_{\CG_n} (\Lambda_n\!\setminus\! \cup_{i\le b} A_i)
 \big).
\end{equation}
The block graph $\CH_{\CG_n}((A_i)_{i\le b})$ can only be connected if it contains a spanning tree on its blocks. To count these spanning trees, we introduce the \emph{rooted labeled $\mathbf{f}$-tree}. In the following definition, we use that each tree on $b$ vertices has $b-1$ edges.

\begin{definition}[$\mathbf{f}$-tree]\label{def:f-tree}
 Let $\CF_b$ be the set of vectors $\mathbf{f}=(f_1,\dots, f_{b})\in\N_0^{b}$ satisfying $\sum_{i\in[b]}f_i=b-1$, $f_1+\ldots+f_j\ge j$ for all $j\in[b-1]$. We call $\mathbf{f}$ the vector of \emph{forward degrees}.  A rooted labeled tree on $b$ vertices is an $\mathbf{f}$-tree if the root has label $1$, and it has an outgoing edge to each of the vertices with labels $2,\dots,f_1+1$, vertex $2$ has an outgoing edge to each of the vertices with labels  $f_1+2,\dots,f_1+f_2+1$, and so on, the vertex with label $j$ has an outgoing edge to each of the vertices with labels $2+\sum_{i=1}^{j-1}f_i, \dots, 1+ \sum_{i=1}^{j}f_i $. If $(i,j)$ is a directed edge in an $\mathbf{f}$-tree, then we say that $i$ is the parent of $j$ and $j$ is the child of $i$.
 We say that the labeled block graph $\CH_{\CG_n}((A_i)_{i\le b})$ is \emph{$\mathbf{f}$-connected}, if it contains an $\mathbf{f}$-tree on its vertices $(1,2,\dots, b)$.
\end{definition}
Given a  forward-degree vector $\mathbf{f}$ and a labeled set of vertices, the $\mathbf{f}$-tree is uniquely determined.
The construction ensures that the block with label $b$ must be a leaf, i.e., it has forward degree $f_b=0$, and its parent corresponds to the label of the last nonzero entry of $\mathbf{f}$. Further, given a tree $T$ with labeling $\mathbf f \in \CF_b$, upon removing the leaf with label $b$, we obtain a tree $T\setminus \{b\}$  on $\{1, \dots, b-1\}$ with a labeling in $\CF_{b-1}$.

Further, an $\mathbf f$-tree always has vertex $1$ as its root, and the forward neighbors of any vertex have consecutive labels. Hence, not all the $b!$ labelings of a tree $T$ are valid labelings, i.e., no vector $\mathbf f\in \CF_b$ can be associated to some labelings.
However, for a fixed tree $T$ on a connected block graph $\CH_{\CG_n}((A_i)_{i\le b})$, there is at least one permutation $\sigma$ of $(1,2,\ldots, b)$ with $\sigma(1)=1$ and a vector $\mathbf{f}\in\CF_b$ such that $\CH_{\CG_n}((A_{\sigma(i)})_{i\le b})$ is $\mathbf{f}$-connected. In other words, we can relabel the blocks so that the new labeling $(1, \sigma(2), \dots \sigma(b))$ is a proper labeling of $T$, for some $\mathbf f\in \CF_b$ in Definition \ref{def:f-tree}.  We denote the set of permutations of $(1,2,\dots, b)$ with $1$ a fixed point by $\CS_b^1$. Note that the choice of the spanning tree $T$ may not be unique if $\CH_{\CG_n}((A_i)_i)$ is connected. We obtain on the first factor inside the sum in  \eqref{eq:explain-factorial-1} that
\begin{equation}\label{eq:explain-factorial-2}
  \Prob\big(\CH_{\CG_n}((A_i)_{i\le b})\mbox{ connected}\big)  =
    \Prob\Big(\bigcup_{\mathbf{f} \in \CF_b}\bigcup_{\sigma \in \CS_b^1}   \big\{\CH_{\CG_n}((A_{\sigma(i)})_{i\le b})\mbox{ is $\mathbf{f}$-connected}\big\}\Big).
\end{equation}
If, for a given $(\mathbf{f}, \sigma)$ the block graph is $\CH_{\CG_n}((A_{\sigma(i)})_{i\le b})$ is $\mathbf{f}$-connected, then there are $\prod_i f_i!$ other pairs $(\mathbf{f}', \sigma')$ such that $\CH_{\CG_n}((A_{\sigma'(i)})_{i\le b})$ is $\mathbf{f}'$-connected, counting the isomorphisms of rooted trees: namely, the (consecutive) labels of the forward neighbors of any vertex $v$ may be permuted (yielding the factor $f_v!$ for each vertex), resulting in permuting the labels in the forward-subtrees of $v$ accordingly.  For any such $(\mathbf{f}, \sigma)$ and $(\mathbf{f}', \sigma')$, we then also have that
$\prod_{i=1}^{b}f_i!=\prod_{i=1}^{b}f_i'!$.  Hence, in the above union each rooted tree $T$ (with root fixed) on $\CH_{\CG_n}((A_{\sigma'(i)})_{i\le b})$ is counted $\prod_{i=1}f_i!$ times. Thus, we obtain from \eqref{eq:explain-factorial-2} that
\begin{equation}\nonumber
 \Prob\big(\CH_{\CG_n}((A_i)_{i\le b})\mbox{ connected}\big) \le \sum_{\mathbf{f}\in\CF_b}\bigg(\prod_{i=1}^b\frac{1}{f_i!}\bigg)\sum_{\sigma\in \CS_b^1}\Prob\big( \CH_{\CG_n}((A_{\sigma(i)})_{i\le b})\mbox{ is $\mathbf{f}$-connected}\big).%
\end{equation}
We substitute this into \eqref{eq:explain-factorial-1}, and use that the product $\prod_{i=1}^b\frac{1}{f_i!}$ is invariant under label permutations. So we arrive at
\begin{equation}
 \begin{aligned}
  \Prob\big(\CE_1(b)\big)
  \le
  \frac{1}b
  \sum_{\mathbf{f}\in\CF_b}\hspace{-2pt}\bigg(\prod_{i=1}^b\frac{1}{f_i!}\bigg)\sum_{A_1}\frac{1}{(b-1)!}\sum_{\sigma\in \CS_b^1}&\sum_{(A_i)_{2\le i\le b}\in_1\CA_\mathrm{small}}\hspace{-24pt}\Prob\big(\CH_{\CG_n}((A_{\sigma(i)})_{i\le b})\mbox{ is $\mathbf{f}$-connected}\big) \\
  &\quad\cdot                                                                  
                                \Prob\big((\cup_{i} \widetilde\partial_\mathrm{int}A_{\sigma(i)})\not\sim_{\CG_n} (\Lambda_n\!\setminus\! \cup_{i} A_{\sigma(i)})
  \big).
 \end{aligned}\nonumber%
\end{equation}
We now argue that the sum over the permutations and the factor $1/(b - 1)!$ cancel each other.
Given $A_1$, let $(B_2,\ldots, B_b)\in_1 \CA_{\mathrm{small}}$ be arbitrary blocks of total size at least $k-|A_1|$, also $1$-disconnected from $A_1$. Then, for any permutation $\sigma\in \CS_b^1$, in the summations over the blocks $A_2,\ldots,A_b$, there is precisely one combination of blocks such that $A_{\sigma(i)}=B_i$ for all $i\le b$.
Hence, when summing over all permutations $\sigma\in \CS_b^1$, we counted the case that the blocks are $A_1, B_2, \dots, B_b$ exactly $(b-1)!$ times.
This cancels the factor $1/(b-1)!$, and we arrive at
\begin{equation}
 \begin{aligned}
  \Prob\big(\CE_1(b)\big)
  \le
  \frac{1}b
  \sum_{\mathbf{f}\in\CF_b}\bigg(\prod_{i=1}^b\frac{1}{f_i!}\bigg)\sum_{(A_i)_{i\le b}\in_1\CA_\mathrm{small}}\Prob & \big( \CH_{\CG_n}((A_{i})_{i\le b})\mbox{ $\mathbf{f}$-connected}\big) \\  & \cdot\Prob\big((\cup_{i\le b} \widetilde\partial_\mathrm{int}A_i)\not\sim_{\CG_n} (\Lambda_n\!\setminus\! \cup_{i\le b} A_i)
  \big).
 \end{aligned}\nonumber%
\end{equation}
Lastly, we prescribe the sizes of the boundaries of the blocks.
We introduce the possible boundary-length vectors:
\begin{equation}\label{eq:boundary-vectors}
 \CM_b(k):=\left\{ \begin{aligned}\mathbf{m}\!=\!(m_1,\ldots, m_b)\in\N^b:  \exists (A_i)_{i\le b}\!\in_1\!\CA_\mathrm{small}: \ &|\widetilde\partial_\mathrm{int}A_i|=m_i\  \forall i\le b, \\ &|\cup_{i\le b}A_i|\ge k
 \end{aligned}\right\}.
\end{equation}
Then,
\begin{equation}
 \begin{aligned}
  \Prob\big(\CE_1(b)\big)\le \frac{1}{b} \sum_{\mathbf{m}\in\CM_b(k)}\sum_{\mathbf{f}\in\CF_b}\bigg(\prod_{i\in[b]}\frac{1}{f_i!}\bigg)&\sum_{\substack{(A_i)_{i\le b}\in_1\CA_\mathrm{small},\\ |\widetilde\partial_\mathrm{int}A_i|=m_i\, \forall i\le b}} \Prob \big(\CH_{\CG_n}((A_i)_i)\mbox{ $\mathbf{f}$-conn.}\big)                                                                                       \\
                                                                                                                                                                                                                            & \qquad \qquad\qquad\cdot \Prob\big(\cup_i \partial_\mathrm{int}A_i\not\sim_{\CG_n} \Lambda_n\setminus \cup_i A_i\big).\label{eq:spanning-before-stats}
 \end{aligned}
\end{equation}
The next two statements will imply Lemma \ref{lemma:spanning-tree}.
\vskip1em

\begin{statement}[Counting spanning trees]\label{stat:spanning} Let $\CG_n$ be long-range percolation on $\Lambda_n$ as in Definition \ref{def:lrp} with $d\ge 2$, $\alpha> 1+1/d$.
 There exists $c_{\ref{stat:spanning}}=c_{\ref{stat:spanning}}(d,\alpha)>0$ such that for all fixed $\mathbf{m}\in\CM_b(1)$,
 \begin{equation}\label{eq:stat-spanning}
  \sum_{\mathbf{f}\in\CF_b}
  \bigg(\prod_{i\in[b]}\frac1{f_i!}\bigg)\sum_{\substack{(A_i)_{i\le b}\in_1\CA_\mathrm{small},\\ |\widetilde\partial_\mathrm{int}A_i|=m_i\, \forall i\le b}}\hspace{-12pt}\Prob\big(\CH_{\CG_n}((A_i)_i)\mbox{ is $\mathbf{f}$-connected}\big) \le n\big(C_{\ref{stat:spanning}}\cdot p\beta^{d\alpha}\big)^{\sum_{i\in[b]}m_i}.
 \end{equation}
\end{statement}
 The bound given by the previous statement increases exponentially in $\sum_{i \in [b]}m_i$. While this might look harmful, the next statement will compensate for this, so that the two bounds combined, for appropriate choices of the constants $p$ and $\beta$, still gives a bound decaying exponentially in $\sum_{i \in [b]}m_i$.
\vskip1em
\begin{statement}[Isolation]\label{stat:disconn} Let $\CG_n$ be long-range percolation on $\Lambda_n$ as in Definition \ref{def:lrp} with $d\ge 2$, $\alpha> 1+1/d$. 
 There exists $c_{\ref{stat:disconn}}=c_{\ref{stat:disconn}}(d)>0$ such that for any $\mathbf{m}\in\CM_b(1)$, and any $1$-disconnected blocks $(A_i)_{i\le b}\in_1\CA_\mathrm{small}$ with $|\widetilde\partial_\mathrm{int}A_i|=m_i$ for all $i$,
 \begin{equation}\label{eq:stat-isolation}
  \Prob\Big((\cup_i \widetilde\partial_\mathrm{int}A_i)\not\sim_{\CG_n} (\Lambda_n\!\setminus\! \cup_i A_i)\Big) \le 
     \big(1-p(1\wedge\beta)^{d\alpha}\big)^{(\beta\vee1)c_{\ref{stat:disconn}}\cdot\sum_{i\in[b]}m_i}.
 \end{equation}
\end{statement}
We show first that Lemma \ref{lemma:spanning-tree} follows from these statements, and then prove the statements in the remainder of the section.
\begin{proof}[Proof of Lemma \ref{lemma:spanning-tree}, assuming Statements \ref{stat:spanning} and \ref{stat:disconn}]
 Substituting the bounds from Statements \ref{stat:spanning} and \ref{stat:disconn} into the right-hand side of \eqref{eq:spanning-before-stats} yields for $\beta\ge 1$ that
  \begin{equation}\label{eq:vector-m-1}
  \Prob\big(\CE_1(b)\big)\le \frac{1}{b}
  \sum_{\mathbf{m}\in\CM_b(k)}n\big(C_{\ref{stat:spanning}}p\beta^{d\alpha}\cdot(1-p)^{\beta\cdot c_{\ref{stat:disconn}}}\big)^{\sum_{i\in[b]}m_i}.
 \end{equation}
 In what follows we evaluate the summation over the vectors $\mathbf m\in \CM_b(k)$.
 We recall from \eqref{eq:boundary-vectors} that $\mathbf{m}$  represents the vector of interior boundary sizes of $1$-connected sets $(A_i)_{i\le b} \in_1 \CA_{\mathrm{small}}$ with total size at least $k$, and $A_i=\bar A_i\le 3n/4$  for all $i\le b$ by the definition of $\CA_{\mathrm{small}}$ in \eqref{eq:a-sets}. In \eqref{eq:boundary-vectors}, the boundary is taken with respect to $\Z^d$, i.e., \emph{not} with respect to $\Lambda_n$.
 By the isoperimetric inequality in Claim \ref{claim:iso},  for all blocks $(A_i)_{i\le b}$ it simultaneously holds that $|\widetilde\partial_\mathrm{int} A_i|\ge|A_i|^{(d-1)/d}$.
 Since the function $g(k)= k^{(d-1)/d}$ is concave and increasing, we obtain for all $\mathbf{m}\in\CM_b(k)$ and any $(A_i)_{i\le b}\in_1\CA_{\mathrm{small}}$ satisfying $\widetilde\partial_\mathrm{int}A_i=m_i$ for all $i\le b$ that
 \begin{equation}
  m_1+\ldots+m_b= |\widetilde\partial_\mathrm{int} A_1|+\ldots+|\widetilde\partial_\mathrm{int} A_b| \ge
  |A_1|^{(d-1)/d}+\ldots+|A_b|^{(d-1)/d}\ge
  k^{(d-1)/d}.\nonumber%
 \end{equation}
 We define the set $\CM_b(k, \ell):=\{\mathbf{m}\in\CM_b(k): m_1+\ldots+m_b=\ell\}$. By standard estimates (using that each summand is at least one), we bound
 $
  |\CM_b(k, \ell)|\le \binom{\ell+b}{b}\le\binom{2\ell}\ell\le 2^{2\ell}\le \re^{2\ell}.
 $ Hence, separating the summation in \eqref{eq:vector-m-1} according to the possible values of $\sum_{i}m_i=\ell\ge k^{(d-1)/d}$, we arrive at
  \begin{equation}
  \Prob\big(\CE_1(b)\big)\le \frac{n}b\sum_{\ell=k^{(d-1)/d}}^\infty \big(2\cdot C_{\ref{stat:spanning}}\cdot p\beta^{d\alpha}(1-p)^{\beta\cdot c_{\ref{stat:disconn}}}\big)^{\ell}.\label{eq:after-comb-trick}
 \end{equation}
 Since $b\le\lfloor n/2\rfloor$, we obtain by a union bound over the number of blocks that 
  \begin{equation}\label{eq:unlikely-proof-last}
  \Prob\Big(\bigcup_{b\le \lfloor n/2 \rfloor}\!\!\CE_1(b)\Big)
  \le
  \sum_{b=1}^{\lfloor n/2\rfloor}\frac{n}b\sum_{\ell=k^{(d-1)/d}}^\infty \big(2 C_{\ref{stat:spanning}}\cdot p\beta^{d\alpha}(1-p)^{\beta\cdot c_{\ref{stat:disconn}}}\big)^{\ell}.
 \end{equation}
 The two sums are independent of each other.  
 We bound the first sum from above by $n\log n$.
 We obtain
 \begin{equation}\nonumber
  \Prob\Big(\bigcup_{b\le \lfloor n/2 \rfloor}\!\!\CE_1(b)\Big)
  \le
 (n\log n)\sum_{\ell=k^{(d-1)/d}}^\infty \big(2 C_{\ref{stat:spanning}}\cdot p\beta^{d\alpha}(1-p)^{\beta\cdot c_{\ref{stat:disconn}}}\big)^{\ell}.
 \end{equation}The two constants $C_{\ref{stat:spanning}}$ and $c_{\ref{stat:disconn}}$ depend only on $d$ and $\alpha$. 
We may assume that $p$ and $\beta$ are such that $p\beta^{d\alpha}(1-p)^{\beta\cdot c_{\ref{stat:disconn}}}$ is smaller than $(2 C_{\ref{stat:spanning}})^{-2}$, and under this assumption, we obtain Lemma~\ref{lemma:spanning-tree} when $\beta\ge 1$. When $\beta<1$, we replace $(1-p)^\beta$ with $(1-p\beta^{d\alpha})$ in  \eqref{eq:vector-m-1} and conclude in the same way.
\end{proof}

\subsection{Proof of Statement \ref{stat:spanning}}
We start with a geometric claim.

\begin{claim}\label{claim:cross}
 There exists a constant $C_{\ref{claim:cross}}>0$ such that for each block $A\in\CA_\mathrm{small}$ in $\Lambda_n$ and all $r\in\N$,
 \begin{equation}\label{eq:cross-boundary}
  \big|\big\{(x,y)\in \Z^d\times \Z^d: x\in A, y\notin A, \|x-y\|\in (r,r+1]\big\}\big|\le C_{\ref{claim:cross}} r^d |\widetilde \partial_{\mathrm{int}} A|.
 \end{equation}
\end{claim}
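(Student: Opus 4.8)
The plan is to group the pairs $(x,y)$ by their displacement $v:=y-x$ and, for each fixed admissible $v$, to bound the number of \emph{bad starting points} $x\in A$ with $x+v\notin A$ by $\|v\|_1\cdot|\widetilde\partial_{\mathrm{int}}A|$ through a charging argument onto interior boundary vertices. I would begin from the identity
\begin{equation*}
 \big|\big\{(x,y)\in\Z^d\times\Z^d: x\in A,\ y\notin A,\ \|x-y\|\in(r,r+1]\big\}\big|
 \ =\ \sum_{v\in\Z^d:\,r<\|v\|\le r+1}\big|\{x\in A: x+v\notin A\}\big|,
\end{equation*}
and then bound the number of summands and the size of each summand separately. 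I note in passing that neither $1$-connectedness of $A$ nor the restriction $A\in\CA_{\mathrm{small}}$ is used anywhere in this argument.

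To count the summands, I would use that the half-open unit cubes $v+[-1/2,1/2)^d$, over all $v\in\Z^d$ with $r<\|v\|\le r+1$, are pairwise disjoint and are all contained in the fattened shell $\{x\in\R^d: r-\sqrt d/2<\|x\|<r+1+\sqrt d/2\}$, whose Lebesgue measure is at most $C_d\,r^{d-1}$ (recall $r\in\N$, so $r\ge1$, and the bounded-$r$ range is absorbed into the constant). Comparing volumes, there are at most $C_d\,r^{d-1}$ such $v$.

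To bound a single summand, I would fix $v$ with $r<\|v\|\le r+1$ together with the lexicographic monotone lattice path from $0$ to $v$ (take all $\mathrm{sgn}(v_1)e_1$-steps, then all $\mathrm{sgn}(v_2)e_2$-steps, and so on), whose partial sums are vectors $a_0=0,a_1,\ldots,a_L=v$ with $L:=\|v\|_1\le\sqrt d\,\|v\|\le\sqrt d\,(r+1)$ and with consecutive $a_t$ differing by one unit vector. Given $x\in A$ with $x+v\notin A$, the vertices $x+a_0,\ldots,x+a_L$ form a lattice path from a point of $A$ to a point outside $A$, so $t^\ast:=\min\{t: x+a_t\notin A\}$ exists and satisfies $t^\ast\ge1$; put $z(x):=x+a_{t^\ast-1}$. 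Then $z(x)\in A$ and its $\ell_1$-neighbour $x+a_{t^\ast}$ lies outside $A$, hence $z(x)\in\widetilde\partial_{\mathrm{int}}A$. Since $z(x)-x\in\{a_0,\ldots,a_{L-1}\}$, a set of $L$ vectors depending only on $v$, each $z\in\widetilde\partial_{\mathrm{int}}A$ equals $z(x)$ for at most $L$ starting points $x$ (those among $\{z-a_0,\ldots,z-a_{L-1}\}$). Consequently $|\{x\in A: x+v\notin A\}|\le L\cdot|\widetilde\partial_{\mathrm{int}}A|\le\sqrt d\,(r+1)\,|\widetilde\partial_{\mathrm{int}}A|$.

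Multiplying the two bounds would give $C_d\,r^{d-1}\cdot\sqrt d\,(r+1)\,|\widetilde\partial_{\mathrm{int}}A|\le C_{\ref{claim:cross}}\,r^d\,|\widetilde\partial_{\mathrm{int}}A|$ for a suitable constant $C_{\ref{claim:cross}}=C_{\ref{claim:cross}}(d)$, which is exactly \eqref{eq:cross-boundary}. I do not expect a genuine obstacle here: the only two points requiring a little care are the shell count, handled by the disjoint-unit-cubes volume estimate, and the verification that the map $x\mapsto z(x)$ is at most $L$-to-one, whose crux is that the partial-sum vectors $a_0,\ldots,a_{L-1}$ of the chosen path are determined by $v$ alone, so that knowing $v$ and $z(x)$ recovers $x$ up to $L$ possibilities. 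An equivalent route would be to bound $|\{x\in A: x+v\notin A\}|\le|A\,\triangle\,(A-v)|$ and telescope along $L$ unit translations, each of which changes the symmetric difference by at most $2|\widetilde\partial_{\mathrm{int}}A|$; I would use whichever turns out cleaner to write.
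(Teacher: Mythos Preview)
Your argument is correct and takes a genuinely different route from the paper. The paper's proof is geometric: it first bounds, for a fixed unit cube $z+[-1/2,1/2]^d$, the number of segments of $\ell_2$-length in $(r,r+1]$ that cross it (by a cone argument this is $O(r^d)$), and then uses that the $\ast$-connected set $\widetilde\partial_{\mathrm{int}}A$ forms a separating surface, so every segment from $A$ to $A^c$ must cross some cube centred at a boundary vertex. Summing over the $|\widetilde\partial_{\mathrm{int}}A|$ boundary vertices gives the bound. This last step explicitly uses $A=\bar A$ (via $\ast$-connectedness of the boundary from Claim~\ref{claim:containment}(iv)), which is why the claim is stated for $A\in\CA_{\mathrm{small}}$.

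Your approach instead fibres over the displacement $v$ and, for each $v$, charges bad starting points to $\widetilde\partial_{\mathrm{int}}A$ along a fixed $\ell_1$-monotone lattice path. This is more elementary---no cones, no surface-crossing lemma---and, as you observe, needs neither $1$-connectedness nor the hole-free condition $A=\bar A$; it works for arbitrary finite $A\subseteq\Z^d$. Both routes give the same $r^d$ factor, though for different reasons: the paper gets $r^d$ directly as the crossing count per boundary cube, while you get $r^{d-1}$ from the shell count times $r$ from the path length. Your alternative telescoping via $|A\triangle(A-v)|$ is indeed equivalent to the charging argument and would work equally well.
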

\begin{proof}
 We start counting line-segments of the right length with endpoints in $\Z^d$ crossing a single unit square that will be centered later at some vertex in $\widetilde \partial_{\mathrm{int}} A$.

 Let $\mathrm B_{0}:=[-1/2,1/2]^d$. For two vertices $x,y\in \Z^d$, let $\CL_{x,y}$ denote the segment between $x$ and $y$ on the unique line connecting $x$ and $y$.
 Define
 \[ \mathrm{Cross}(r):=\{(x,y)\in \Z^d\times \Z^d: \|x-y\|_2 \in (r, r+1], \CL_{x,y}\cap \mathrm{B}_{0}\neq \emptyset\}.\]
 We will show that  there exists a constant $C_{\ref{claim:cross}}>0$ such that
 \begin{equation}\label{eq:cross}
  |\mathrm{Cross}(r)|\le C_{\ref{claim:cross}}r^d. \end{equation}
 Indeed, for each pair $(x,y) \in \mathrm{Cross}(r)$, at least one of the inequalities $\|x\|\ge r/2$ and $\|y\|\ge r/2$ is satisfied. Without loss of generality we may assume that $\|x\|\ge r/2$, and then also $\|x\|\le r+1$.
 Fix then such a vertex $x$.
 Let $\CS_{x}(r_0)$ denote the smallest spherical cone with apex at $x$ that completely contains $\mathrm{B}_{0}$, with $r_0$ being the radius of this cone. Let $\CS_{x}(r)$ denote a cone with apex $x$ that has the same boundary lines (and the same angle) as $\CS_{x}(r_0)$, but radius exactly $r$. Then, since $\|x\|\in [ r/2, r+1]$ by assumption,  $r_0 \in [ \|x\|+1/2, \|x\|+\sqrt{d}/2]$.
 Further, every $y\in \Z^d$ such that $(x,y)\in\mathrm{Cross}(r)$ must be  contained in $\CS_x(r+1)\setminus \CS_{x}(r)$, since all half-lines emanating from $x$ that cross $\mathrm{B_0}$ are contained in $\CS_x(\infty)$, and $\|x-y\|\in (r, r+1]$. Since the radius of $\CS_x(r+1)$ is at most by a factor two larger than the radius $r_0$ of $\CS_x=\CS_x(r_0)$ for all $r\ge 1$, by homothety of the cones,  $|(\CS_{x}(r+1)\setminus \CS_{x}(r))\cap \Z^d|$ is bounded from above by a dimension-dependent constant, and so for each $x$ with $\|x\|\in[r/2,r+1]$, the number of pairs $(x,y)\in \mathrm{Cross}(r)$ is bounded from above by a dimension-dependent constant.
 Summing over all the at most $O(r^d)$ many such $x$, we obtain \eqref{eq:cross} for some $C_{\ref{claim:cross}}>0$.

 To arrive to \eqref{eq:cross-boundary}, the block $A\in \CA_{\mathrm{small}}$ in \eqref{eq:a-sets} ensures that $A=\bar A$. Its interior boundary  $\widetilde\partial_\mathrm{int}A$ is then $\ast$-connected by Claim \ref{claim:containment} Part~(iv) (\cite[Lemma 2.1]{deuschel1996surface}). This implies that there exists a $\ast$-connected surface fully contained in $\widetilde\partial_\mathrm{int}A$ separating vertices in $A\setminus \widetilde\partial_\mathrm{int}A$ from vertices in $\Z^d\setminus A$. Hence, for each pair $x\in A$ and $y\notin A$,  there exists at least one vertex $z\in \widetilde\partial_\mathrm{int}A$ such that $\CL_{x,y}$ intersects the axis-parallel box $z+\mathrm{B}_0$. Here $x=z$ may occur. The statement of the claim now follows by \eqref{eq:cross} when summing the at most $C_{\ref{claim:cross}}r^d$ such pairs for each vertex $z$ on the boundary of $A$.
\end{proof}
We continue with a lemma treating the connectedness of the block graphs, i.e., the inner summation on the left-hand side of \eqref{eq:stat-spanning} in Statement \ref{stat:spanning}. We point out that in this lemma we only bound the event that the block graph $\CH_{\CG_n}$ is connected, not the event that the actual graph is connected. 

\begin{lemma}\label{claim:spanning} Let $\CG_n$ be long-range percolation on $\Lambda_n$ as in Definition \ref{def:lrp} with $d\ge 2$, $\alpha> 1+1/d$.
 There exists a constant $C_{\ref{claim:spanning}}=C_{\ref{claim:spanning}}(d, \alpha)>0$ such that for all $\mathbf{m}\in\CM_b(1)$, $\mathbf{f}\in\CF_b$,
 \begin{equation}
  \sum_{\substack{(A_i)_{i\le b}\in_1 \CA_{\mathrm{small}},\\ |\widetilde\partial_\mathrm{int}A_i|=m_i\, \forall i\le b}}\Prob\big(  \CH_{\CG_n}((A_i)_{i\in[b]})\mbox{ is $\mathbf{f}$-connected}\big)\le n(C_{\ref{claim:spanning}}p\beta^{d\alpha})^{b-1}
  \prod_{i\in[b]}\re^{c_\mathrm{pei}m_i}m_i^{f_i}.
  \label{eq-claim:spanning}
 \end{equation}
\end{lemma}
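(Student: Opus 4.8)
The plan is to prove \eqref{eq-claim:spanning} by induction on $b$, peeling off at each step the leaf of the $\mathbf f$-tree that carries the largest label, and feeding in Claim~\ref{claim:cross} together with the Peierls bound of Lemma~\ref{lemma:peierl} as the only quantitative inputs.

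Before the induction I would record the single probabilistic estimate that will be used at every step: for every block $A\in\CA_{\mathrm{small}}$ in $\Lambda_n$,
\[
 \sum_{x\in A}\ \sum_{y\in\Z^d\setminus A}\mathrm p\big(\|x-y\|\big)\ \le\ C'\, p\beta^{d\alpha}\,\big|\widetilde\partial_\mathrm{int}A\big|
\]
for a constant $C'=C'(d,\alpha)$. I would prove this by splitting the pairs $(x,y)$ with $x\in A$, $y\notin A$ according to the shell $\|x-y\|\in(r,r+1]$ for $r\in\N_0$. For $r=0$ there are at most $2d\,|\widetilde\partial_\mathrm{int}A|$ such pairs, each contributing $\mathrm p(1)=p(1\wedge\beta)^{d\alpha}\le p\beta^{d\alpha}$. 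For $r\ge1$, Claim~\ref{claim:cross} bounds the number of pairs in the shell by $C_{\ref{claim:cross}}r^d|\widetilde\partial_\mathrm{int}A|$, while $\mathrm p(\|x-y\|)\le p$ whenever $r<\beta$ and $\mathrm p(\|x-y\|)\le p\beta^{d\alpha}r^{-d\alpha}$ whenever $r\ge\beta$. Hence the shells with $r<\beta$ contribute $O\big(p\sum_{r\le\beta}r^d\big)=O(p\beta^{d+1})=O(p\beta^{d\alpha})$ (using $\beta\ge1$ and $d\alpha>d+1$ whenever that regime is non-empty), and the shells with $r\ge\beta$ contribute $O\big(p\beta^{d\alpha}\sum_{r\ge1}r^{d-d\alpha}\big)=O(p\beta^{d\alpha})$, the series converging exactly because $\alpha>1+1/d$. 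When $\beta<1$ only the $r=0$ term and the far shells occur and the bound is immediate.

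For the base case $b=1$ the left-hand side of \eqref{eq-claim:spanning} counts blocks $A_1\subseteq\Lambda_n$ with $A_1=\bar A_1$ and $|\widetilde\partial_\mathrm{int}A_1|=m_1$; summing over a vertex contained in $A_1$ and applying Lemma~\ref{lemma:peierl} bounds this by $n\re^{c_\mathrm{pei}m_1}$, which is the right-hand side since $f_1=0$. For the inductive step with $b\ge2$, I use that the vertex with label $b$ is a leaf of the $\mathbf f$-tree, that its parent $q$ is the index of the last nonzero entry of $\mathbf f$, and that deleting this leaf produces a forward-degree vector $\mathbf f'\in\CF_{b-1}$ obtained from $\mathbf f$ by lowering $f_q$ to $f_q-1$ (all as recorded just after Definition~\ref{def:f-tree}). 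Writing the sum over ordered $1$-disconnected tuples $(A_i)_{i\le b}$ as an outer sum over $(A_i)_{i\le b-1}$ and an inner sum over $A_b$ (kept $1$-disconnected from all of $A_1,\dots,A_{b-1}$), the event that $\CH_{\CG_n}((A_i)_{i\le b})$ is $\mathbf f$-connected equals the intersection of $\{\CH_{\CG_n}((A_i)_{i\le b-1})\text{ is }\mathbf f'\text{-connected}\}$ with $\{A_q\sim_{\CG_n}A_b\}$; since $A_b$ is disjoint from $\cup_{i\le b-1}A_i$, these two events depend on disjoint collections of edges of $\CG_n$ and hence are independent, so the probability factorizes.

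It then remains to bound the inner sum. Relaxing the constraints on $A_b$ to just $A_b=\bar A_b$, $|\widetilde\partial_\mathrm{int}A_b|=m_b$ and $A_b\cap A_q=\emptyset$, applying the union bound $\Prob(A_q\sim_{\CG_n}A_b)\le\sum_{x\in A_q,\,y\in A_b}\mathrm p(\|x-y\|)$, exchanging the order of summation, and using Lemma~\ref{lemma:peierl} to bound by $\re^{c_\mathrm{pei}m_b}$ the number of admissible $A_b$ through any fixed vertex $y$, the inner sum is at most $\re^{c_\mathrm{pei}m_b}\sum_{x\in A_q}\sum_{y\in\Z^d\setminus A_q}\mathrm p(\|x-y\|)\le C'p\beta^{d\alpha}m_q\,\re^{c_\mathrm{pei}m_b}$ by the auxiliary estimate, valid since $A_q\in\CA_{\mathrm{small}}$ because $q\le b-1$. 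Multiplying by the induction hypothesis applied to $(m_1,\dots,m_{b-1})\in\CM_{b-1}(1)$ (a witness is obtained by restricting any witness for $\mathbf m\in\CM_b(1)$) and to $\mathbf f'$, namely $n(C'p\beta^{d\alpha})^{b-2}\prod_{i\le b-1}\re^{c_\mathrm{pei}m_i}m_i^{f_i'}$, and collecting the powers through $m_q\cdot m_q^{f_q-1}=m_q^{f_q}$ and $m_b^{f_b}=1$, gives exactly \eqref{eq-claim:spanning} with $C_{\ref{claim:spanning}}:=C'$. The step I expect to require the most care is this factorization: one must verify both that the $\mathbf f$-tree minus its largest leaf is again a validly labeled $\mathbf f'$-tree on $b-1$ vertices and that the connecting edge realizing $A_q\sim_{\CG_n}A_b$ is never among the edges already constrained by the smaller block graph — and the latter is exactly why $A_b$ must be kept $1$-disconnected from, hence disjoint from, the earlier blocks. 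Everything else is bookkeeping together with the single-shell estimate, and no analytic obstacle arises once $d\alpha>d+1$.
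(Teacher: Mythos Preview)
Your proof is correct and follows essentially the same route as the paper: induction on $b$, peeling off the leaf with label $b$ (parent the last nonzero entry of $\mathbf f$), factorizing via edge-independence, and bounding the inner sum by Claim~\ref{claim:cross} plus Lemma~\ref{lemma:peierl}. The only cosmetic difference is that the paper keeps $A_b$ $1$-disconnected from its parent $A_\ell$ (so $\|x-y\|>1$ and the shell sum starts at $r=1$) and uses the single uniform bound $\mathrm p(\|x-y\|)\le p\beta^{d\alpha}\|x-y\|^{-d\alpha}$ valid for all $r$, which spares the case split on $\beta$ that you carry out in your auxiliary estimate.
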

We comment that it is this lemma in the proof that crucially uses that  $\alpha>1+1/d$.
\begin{proof}
 We will prove the statement by induction on $b$.
 We first define the finite constant $C_{\ref{claim:spanning}}$, using that $\alpha>1+1/d$ as follows:
 \begin{equation}
  C_{\ref{claim:spanning}}:=C_{\ref{claim:cross}}\sum_{r=1}^\infty r^{-(\alpha-1)d}.\label{eq:induction-constant}
 \end{equation}
 We start with the initialization.
 Assume first that $b=1$, which corresponds to a tree on a single vertex (representing the block $A_1$), so its forward degree is $f_1=0$.
 We obtain 
 \begin{equation}\nonumber
 \begin{aligned}
  \sum_{A_1 \in \CA_{\mathrm{small}}:|\widetilde\partial_\mathrm{int}A_1|=m_1}\hspace{-20pt}\Prob\big(\CH_{\CG_n}((A_1))\mbox{ is $\mathbf{f}$-connected}\big)
  &\le
  |\{A_1 \in\CA_{\mathrm{small}}:|\widetilde\partial_\mathrm{int}A_1|=m_1\}|\\
  &\!\le\!\sum_{x\in\Lambda_n}|\{A_1 \in\CA_\mathrm{small} : A_1\ni x, |\widetilde\partial_\mathrm{int}A|=m_1\}|.
  \end{aligned}
 \end{equation}
 Since $A=\bar{A}$ for all $A \in\CA_\mathrm{small}$ by  definition in \eqref{eq:a-sets}, we can apply  Lemma \ref{lemma:peierl}, which yields, since $|\Lambda_n|=n$,
 \begin{equation}
  \sum_{A_1 \in\CA_{\mathrm{small}}:|\widetilde\partial_\mathrm{int}A|=m_1}\Prob\big(\CH_{\CG_n}((A_i)_i)\mbox{ is $\mathbf{f}$-connected}\big)
  \le
  \sum_{x\in\Lambda_n}\re^{c_\mathrm{pei} m_1}=n\re^{c_\mathrm{pei} m_1}.\nonumber
 \end{equation}
 Since $m_1^{f_1}=m_1^0=1$, this finishes the induction base for \eqref{eq-claim:spanning}. We now advance the induction. Assume \eqref{eq-claim:spanning} holds up to $b-1$.
 Let $\mathbf{f}\in\CF_b$ and consider the summation over the last block $A_b \in \CA_{\mathrm{small}}$ on the left-hand side in \eqref{eq-claim:spanning}. By construction of the $\mathbf{f}$-tree in Definition \ref{def:f-tree},  the $b$-th block is a leaf in the $\mathbf{f}$-tree, and $f_b=0$. Its parent in the $\mathbf{f}$-tree is the largest vertex-label $\ell$ in $\mathbf{f}$ that is nonzero, and the remaining labeled graph upon removing $b$ is a tree, with a labeling in $\CF_{b-1}$ (see the comment below Definition \ref{def:f-tree}). Then, the forward degrees of this new tree are given by
 $\mathbf{f}':=(f_1,\ldots,f_{\ell-1}, f_\ell-1,f_{\ell+1},\ldots,f_{b-1})\in\CF_{b-1}$, since the forward degree of the vertex $\ell$ decreased by one upon removing the leaf $b$. With this notation at hand,
 \[
  \{\CH_{\CG_n}((A_i)_{i\in[b]})\mbox{ is $\mathbf{f}$-connected}\}
  =
  \{\CH_{\CG_n}((A_i)_{i\in[b-1]})\mbox{ is $\mathbf{f}'$-connected}\}\cap\{A_\ell\sim_{\CG_n} A_b\}.
 \]
 Independence of edges in $\CG_n$ by Definition \ref{def:lrp} yields
  \begin{align}
  \sum_{(A_i)_{i\le b}\in_1 \CA_\mathrm{small}} \Prob\big(\CH_{\CG_n}((A_i)_{i\in[b]})\mbox{ $\mathbf{f}$-conn.}\big)\le
  \sum_{(A_i)_{i\le b-1}\in_1 \CA_\mathrm{small}}&\Prob\big(\CH_{\CG_n}((A_i)_{i\in[b-1]})\mbox{ $\mathbf{f'}$-conn.}\big)\nonumber\\
  &\cdot \sum_{A_b}\Prob\big(A_\ell\sim_{\CG_n} A_b\big),\label{eq:induction-sum-b}
   \end{align}
 where in the subscripts of the sums (and also in the remainder of the proof) we implicitly assume that the blocks $(A_i)_{i\le b}\in_1 \CA_\mathrm{small}$ are $1$-disconnected, and that
$|\widetilde\partial_{\mathrm{int}}A_i|=m_i$ for all $i\le b$.
 We focus on the summation over $A_b$. Here, $A_b \in\CA_{\mathrm{small}}$ and $A_b$ is $1$-disconnected from $A_{\ell}$.  We decompose the sum according to the length $r$ of an edge $(x,y)$ connecting $x\in A_\ell$ and $y\in A_b$, and use that $\|x-y\|> 1$ by the $1$-disconnectedness of $A_{\ell}, A_b$. By the connection probability \eqref{eq:connection-prob-gen}, $p(1\wedge \beta/r)^{d\alpha}< p\beta^{d\alpha}r^{-d\alpha}$ holds for all $r>0$. So, by a union bound, it follows that
 \begin{equation}\label{eq:al-ab}
  \begin{aligned}
   \sum_{A_b}\Prob\big(A_\ell\sim_{\CG_n} A_b\big)
    & \le
   \sum_{r=1}^\infty \sum_{x\in A_\ell,\, y\in\Z^d\setminus A_\ell}\ind{\|x-y\|\in(r, r+1]}\sum_{A_b\ni y}\Prob\big(x\sim_{\CG_n} y) \\
    & \le
   p\beta^{d\alpha}\sum_{r=1}^\infty r^{-d\alpha} \sum_{x\in A_\ell,\, y\in\Z^d\setminus A_\ell}\ind{\|x-y\|\in(r, r+1]}\sum_{A_b\ni y}1.
  \end{aligned}
 \end{equation}
 Using \eqref{eq:peierl-2} and that the summation in \eqref{eq-claim:spanning} requires that $A_b$ has boundary size $m_b$, we can bound the last sum over $A_b$ from above by $\exp(c_\mathrm{pei}m_b)$. Next, we can apply Claim \ref{claim:cross} to evaluate the summation over $x\in A_\ell, y\in \Z^d \setminus A_\ell$, since this sum equals the cardinality described in \eqref{eq:cross-boundary} with $A=A_{\ell}$. The conditions of the claim are satisfied since $A_\ell=\bar A_{\ell}$ by assuming $A_\ell \in \CA_{\mathrm{small}}$.
 Hence
 \[ \sum_{x\in A_\ell,\, y\in\Z^d\setminus A_\ell}\ind{\|x-y\|\in(r, r+1]}\sum_{A_b\ni y}1\le \mathrm{e}^{c_\mathrm{pei}m_b} C_{\ref{claim:cross}} r^d |\widetilde \partial_{\mathrm{int}} A_{\ell}|= \mathrm{e}^{c_\mathrm{pei}m_b} C_{\ref{claim:cross}} r^d m_\ell.\]
 Substituting this back into \eqref{eq:al-ab}  yields with the constant $C_{\ref{claim:spanning}}$ from \eqref{eq:induction-constant},
 \begin{align*}
  \sum_{A_b}\Prob\big(A_\ell\sim_{\CG_n} A_b\big)
  \le p\beta^{d\alpha} m_\ell\re^{c_\mathrm{pei}m_b}
  C_{\ref{claim:cross}}\sum_{r=1}^\infty r^{(1-\alpha)d}=C_{\ref{claim:spanning}}p\beta^{d\alpha} m_\ell\re^{c_\mathrm{pei}m_b}.
 \end{align*}
 We substitute this bound back into \eqref{eq:induction-sum-b}, and use the induction hypothesis:
 \begin{align*}
  \sum_{(A_i)_{i\le b}\in_1 \CA_{\mathrm{small}}}&\Prob\big(\CH_{\CG_n}((A_i)_{i\in[b]})\mbox{ $\mathbf{f}$-conn.}\big) \\& \le
  C_{\ref{claim:spanning}}p\beta^{d\alpha} m_\ell\re^{c_\mathrm{pei}m_b}\!\!\!\sum_{(A_i)_{i\le b-1}\in_1 \CA_{\mathrm{small}}}\!\!\!\!\Prob\big(\CH_{\CG_n}((A_i)_{i\in[b-1]})\mbox{ $\mathbf{f'}$-conn.}\big)              \\
    & \le
C_{\ref{claim:spanning}}p\beta^{d\alpha} m_\ell\re^{c_\mathrm{pei}m_b} \Bigg(n(C_{\ref{claim:spanning}}p\beta^{d\alpha})^{b-2}\prod_{i\in[b-1]}\re^{c_\mathrm{pei}m_i}m_i^{f_i'}\Bigg) \\                                                  & \le
  C_{\ref{claim:spanning}}p\beta^{d\alpha}m_\ell\re^{c_\mathrm{pei}m_b} n(C_{\ref{claim:spanning}}p\beta^{d\alpha})^{b-2}\frac{1}{m_\ell}\prod_{i\in[b-1]}\re^{c_\mathrm{pei}m_i}m_i^{f_i}.
 \end{align*}
 To obtain the fourth row we used that $f_i'=f_i$ for all $i\neq \ell, i\le b-1$, and $f_\ell'=f_{\ell}-1$ by construction, yielding the $1/m_{\ell}$ factor. We can rearrange the expression and obtain \eqref{eq-claim:spanning}, using that $f_b=0$ (the last block is a leaf). This finishes the proof.
\end{proof}
We are ready to prove Statement \ref{stat:spanning}.

\begin{proof}[Proof of Statement \ref{stat:spanning}]
 Using the result of Lemma \ref{claim:spanning} on the left-hand side of \eqref{eq:stat-spanning} of Statement \ref{stat:spanning}, we arrive at
 \begin{equation}\label{eq:proof-stat-1}
 \begin{aligned}
  \sum_{\mathbf{f}\in\CF_b}
  \bigg(\prod_{i\in[b]}\frac1{f_i!}\bigg)\sum_{(A_i)_{i\le b} \in_1 \CA_{\mathrm{small}}}&\Prob\big(\CH_{\CG_n}((A_i)_i)\mbox{ is $\mathbf{f}$-connected}\big) \\
  &\le
  n \sum_{\mathbf{f}\in\CF_b}(C_{\ref{claim:spanning}}p\beta^{d\alpha})^{b-1}\prod_{i\in[b]}\frac{\re^{c_\mathrm{pei}m_i}m_i^{f_i}}{f_i!}.
  \end{aligned}
 \end{equation}
 We first analyze a single summand, i.e., the value for a fixed $\mathbf{f}\in \CF_b$. Since $\re^{f_i}=\sum_{j\ge 0}(f_i)^j/j!\ge f_i^{f_i}/f_i!$, it follows that  $f_i!\ge (f_i/\re)^{f_i}$. Thus,
 \begin{equation*}
  \begin{aligned}
   n(C_{\ref{claim:spanning}}p\beta^{d\alpha})^{b-1}\hspace{-2pt}\prod_{i\in[b]}\hspace{-1.5pt}\frac{\re^{c_\mathrm{pei}m_i}m_i^{f_i}}{f_i!}
   \le n(C_{\ref{claim:spanning}}p\beta^{d\alpha})^{b-1}
   \exp\Big(c_\mathrm{pei}\sum_{i\in[b]}m_i\Big)
   \prod_{i\in[b]}\Big(\frac{m_i\cdot \re }{f_i}\Big)^{f_i}.
  \end{aligned}
 \end{equation*}
 It follows from  standard differentiation techniques that for any $a,x\ge 1$, the function $g_a(x)=(a \re/x)^x$ is maximized at $x=a$. Maximizing all factors $(m_i\re /f_i)^{f_i} $ at $f_i=m_i$ yields that $(m_i\re /f_i)^{f_i}\le \re^{m_i}$ for all $i\le b$.
 Since by  definition of $\CF_b$ in Definition \ref{def:f-tree} we have $f_1+\ldots+f_b=b-1$ for all $\mathbf{f}\in\CF_b$, it follows
 \[
  \begin{aligned}
   n(C_{\ref{claim:spanning}}p\beta^{d\alpha})^{b-1}\hspace{-2pt}\prod_{i\in[b]}\frac{\re^{c_\mathrm{pei}m_i}m_i^{f_i}}{f_i!} & \le n (C_{\ref{claim:spanning}}p\beta^{d\alpha})^{b-1} \exp\Big((c_\mathrm{pei}+1)\sum_{i\in[b]}m_i\Big)  \\
  & \le n(Cp\beta^{d\alpha})^{\sum_{i\in[b]}m_i}
  \end{aligned}\]
 for  some $C(d, \alpha)=C(c, C_{\ref{claim:spanning}}(d, \alpha))$, where to obtain the last row  we used that  $b-1\le b\le m_1+\ldots+m_b$ as  each block has at least one interior boundary vertex.
 Substituting this back into \eqref{eq:proof-stat-1}, we obtain with  $c''(d,\alpha):=2+c_{\mathrm{pei}}+c'$
 \begin{equation}
  n\sum_{\mathbf{f}\in\CF_b}(C_{\ref{claim:spanning}}p\beta^{d\alpha})^{b-1}\prod_{i\in[b]}\frac{\re^{c_\mathrm{pei}m_i}m_i^{f_i}}{f_i!}
  \le n(Cp\beta^{d\alpha})^{\sum_{i\in[b]}m_i}\sum_{\mathbf{f}\in\CF_b}1.\nonumber
 \end{equation}
 Using again that $f_1+\ldots+f_b=b-1$ for all $\mathbf{f}\in\CF_b$, and using the same combinatorial bounds as for $\mathbf{m}$ above \eqref{eq:after-comb-trick}, we obtain
 $
  |\CF_b|\le \binom{2b}{b}\le 2^{2b}\le \exp\big(2\sum_{i\in[b]}m_i\big),
 $
 finishing the proof for some $C_{\ref{stat:spanning}}(d,\alpha)=C_{\ref{stat:spanning}}(d,\alpha)>0$.
\end{proof}

\subsection*{Proof of Statement \ref{stat:disconn}}
We start with a geometric claim. Recall $\CA_{\mathrm{small}}$ from \eqref{eq:a-sets}, and  holes from Def.~\ref{def:blocks}.
\begin{claim}\label{claim:complement-star-conn}
 Let $(A_1, \dots, A_b)\in_1 \CA$ be a $1$-disconnected sequence of blocks without holes (i.e., $A_i=\bar A_i$ for all $i\le b$), with $A:=\cup_{i\le b}A_i\subseteq \Lambda_n$. 
 Then, $(\Lambda_n \setminus A) \cup (\cup_{i\le b}\widetilde\partial_\mathrm{int}A_i)\supseteq\widetilde\partial_\mathrm{int}\Lambda_n$.
 Moreover,
 $(\Lambda_n \setminus A) \cup (\cup_{i\le b}\widetilde\partial_\mathrm{int}A_i)$  is $\ast$-connected. 
\end{claim}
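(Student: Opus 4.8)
The plan is to prove the two assertions separately. The inclusion $\widetilde\partial_\mathrm{int}\Lambda_n\subseteq(\Lambda_n\setminus A)\cup(\cup_{i\le b}\widetilde\partial_\mathrm{int}A_i)$ is a direct unwinding of Definition~\ref{def:boundary}: if $x\in\widetilde\partial_\mathrm{int}\Lambda_n$, then $x\in\Lambda_n$ has a $\|\cdot\|_1$-neighbor $y\notin\Lambda_n$; if $x\notin A$ we are done, and otherwise $x$ lies in a (unique) block $A_i\subseteq\Lambda_n$, so $y\notin A_i$, hence $y\in\widetilde\partial_\mathrm{ext}A_i$ and therefore $x\in\widetilde\partial_\mathrm{int}A_i$.

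For the $\ast$-connectivity I would argue by path-pushing. First rewrite the target set: writing $A_i^\circ:=A_i\setminus\widetilde\partial_\mathrm{int}A_i$ for the ``strict interior'' of the block $A_i$, disjointness of the blocks gives $S:=(\Lambda_n\setminus A)\cup(\cup_{i\le b}\widetilde\partial_\mathrm{int}A_i)=\Lambda_n\setminus\cup_{i\le b}A_i^\circ$, so in particular $\widetilde\partial_\mathrm{int}A_i\subseteq S$ for every $i$. Now fix $u,v\in S$; since $\Lambda_n$ is $1$-connected, choose a $1$-path (hence a $\ast$-path) $\pi$ from $u$ to $v$ inside $\Lambda_n$. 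Consider a maximal sub-path $\sigma$ of $\pi$ contained in $\cup_i A_i^\circ$. Because consecutive vertices of $\pi$ are at $\|\cdot\|_1$-distance $1$ while $\|A_i-A_j\|_1>1$ for $i\ne j$, the sub-path $\sigma$ lies inside a single $A_i^\circ$. Moreover, as $u,v\in S$, the excursion $\sigma$ is flanked in $\pi$ by two vertices $w_1,w_2\in S$; the endpoint of $\sigma$ adjacent to $w_1$ lies in $A_i^\circ$ and hence (unwinding the definition of $\widetilde\partial_\mathrm{int}A_i$) has all of its $\|\cdot\|_1$-neighbors in $A_i$, so $w_1\in A_i$, and $w_1\in S$ then forces $w_1\in\widetilde\partial_\mathrm{int}A_i$; similarly $w_2\in\widetilde\partial_\mathrm{int}A_i$. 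Since $A_i=\bar A_i$, the set $\widetilde\partial_\mathrm{int}A_i=\widetilde\partial_\mathrm{int}\bar A_i$ is $\ast$-connected by Claim~\ref{claim:containment}(iv), so I would replace the portion of $\pi$ between $w_1$ and $w_2$ by a $\ast$-path inside $\widetilde\partial_\mathrm{int}A_i\subseteq S$. Performing this replacement for every maximal excursion of $\pi$ into $\cup_i A_i^\circ$ yields a $\ast$-walk from $u$ to $v$ entirely inside $S$, hence a $\ast$-path; as $u,v\in S$ were arbitrary, $S$ is $\ast$-connected.

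The main obstacle is the geometric bookkeeping in the second part: one has to check that (i) every excursion of $\pi$ into the removed interiors stays within a single block $A_i^\circ$ --- this is precisely where the $1$-disconnectedness of the blocks is used --- and (ii) the two vertices $w_1,w_2$ at which $\pi$ enters and leaves such an excursion lie in $\widetilde\partial_\mathrm{int}A_i$, which is the set we are permitted to reroute through and which is $\ast$-connected by Claim~\ref{claim:containment}(iv) (this is where the hypothesis $A_i=\bar A_i$ enters). Both points follow by unfolding the definitions of $\widetilde\partial_\mathrm{int}$, $\widetilde\partial_\mathrm{ext}$ and $A_i^\circ$; once they are established, the rerouting is routine. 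Two minor sanity checks worth keeping in mind: $u,v\in S$ ensures that no excursion is ``open-ended'' (so $w_1$ and $w_2$ always exist), and $\widetilde\partial_\mathrm{int}A_i$ is automatically non-empty whenever $\pi$ enters $A_i^\circ$, since its entry vertex $w_1$ lies in it.
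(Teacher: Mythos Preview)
Your proof is correct. The first assertion is handled identically to the paper. For the $\ast$-connectivity, however, you take a genuinely different route: you rewrite $S=\Lambda_n\setminus\cup_i A_i^\circ$, take an arbitrary $1$-path in $\Lambda_n$ between two points of $S$, and reroute each maximal excursion into a single $A_i^\circ$ along the $\ast$-connected boundary $\widetilde\partial_\mathrm{int}A_i$ (via Claim~\ref{claim:containment}(iv)). The paper instead builds an auxiliary graph $\CH$ whose vertices are the blocks $A_i$ and the $1$-connected pieces $B_j$ of $\Lambda_n\setminus A$, and proves $\CH$ is connected by a locally-constant-function argument: any $\{0,1\}$-valued $f$ on $S$ that is constant on $\ast$-connected subsets extends (using again Claim~\ref{claim:containment}(iv)) to all of $\Lambda_n$, and $\ast$-connectedness of $\Lambda_n$ forces it to be constant. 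Your path-pushing argument is more elementary and fully constructive, and it makes the role of the hypotheses transparent (the $1$-disconnectedness confines each excursion to a single $A_i^\circ$, and $A_i=\bar A_i$ gives the $\ast$-connected detour set). The paper's argument is slicker in that it avoids any explicit surgery on paths and packages the combinatorics into a single ``constant function on a connected set'' step; it would also generalize more readily to settings where one wants to avoid tracking individual excursions.
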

\begin{proof}
We first show that $(\Lambda_n \setminus A) \cup (\cup_{i\le b}\widetilde\partial_\mathrm{int}A_i)\supseteq\widetilde\partial_\mathrm{int}\Lambda_n$.
When $x \in (\widetilde\partial_\mathrm{int}\Lambda_n \setminus A)$, then $x\in\Lambda_n\setminus A$. The (only) other case is when $x\in \widetilde\partial_\mathrm{int}\Lambda_n\cap A$. Then there must exist $A_i\subseteq\Lambda_n$ such that $x\in A_i$. Since $x\in\widetilde\partial_\mathrm{int}\Lambda_n$ and $A\subseteq\Lambda_n$, it follows from \eqref{eq:ext-boundary} in  Definition \ref{def:boundary} that there is a vertex $y$ in $\Z^d\setminus \Lambda_n$ neighboring $x$. Since $x\in A_i\subseteq \Lambda_n$, we evidently have $y\notin A_i$, hence  $y\in \widetilde \partial_{\mathrm{ext}}A_i$. 
As a result of \eqref{eq:int-boundary}, $x\in \widetilde\partial_\mathrm{int}A_i$, establishing the statement.

We turn to prove $\ast$-connectedness of $(\Lambda_n \setminus A) \cup (\cup_{i\le b}\widetilde\partial_\mathrm{int}A_i)$.
  Using Claim \ref{claim:unique}, we decompose the set $\Lambda_n\!\setminus\! A$ into a $1$-disconnected sequence of $1$-connected blocks $(B_j)_{j\le b'}$ for some $b'\ge 1$:
\begin{equation}\label{eq:b-j-decomp}
\cup_{j\le b'}B_j= \Lambda_n \setminus A.
\end{equation}
We define now an auxiliary graph. For each set $\widetilde\partial_\mathrm{int}A_i$  we associate a vertex $a_i$ for $i\le b$, and also for each block $B_j$  a vertex $t_j$, for $j\le b'$. 
 We define an auxiliary graph $\CH$ on the vertex set $V=\{a_1\dots, a_b, t_1, \dots, t_{b'}\}$. We say that $a_i\sim_\CH t_j$ if there is a pair of vertices $x\in \widetilde \partial_{\mathrm{int}} A_i, y\in B_j$ that are $\ast$-connected, i.e., $\|x-y\|_\infty=1$. Similarly, $a_i\sim_\CH a_j$ if there is a pair of vertices $x\in \widetilde \partial_{\mathrm{int}} A_i, y\in \widetilde \partial_{\mathrm{int}} A_j$ with $\|x-y\|_\infty=1$. We will show that $\CH$ consists of a single connected component. This then implies that $(\Lambda_n\setminus A) \cup (\cup_{i\le b}\partial_{\mathrm{int}} A_i)$ is $\ast$-connected. 

To show that $\CH$ consists of a single connected component, we argue as follows. 
Let $f: (\Lambda_n\setminus A) \cup (\cup_{i\le b}\widetilde \partial_{\mathrm{int}} A_i) \to\{0,1\}$ be a function that is constant on $\ast$-connected subsets of its domain. 
Since we assumed that $A_i=\bar A_i$ for all $i\le b$, Claim \ref{claim:containment}(iv) is applicable which states that $\widetilde\partial_\mathrm{int}A_i$ is $\ast$-connected for each $i\le b$. Further, $(B_j)_{j\le b'}$ are blocks, i.e., $1$-connected and also $\ast$-connected.
So, $f(x)=f(y)=: f_\CH(a_i)$ for all $x,y\in \widetilde \partial_{\mathrm{int}} A_i$, and also $f(x)=f(y)=: f_\CH(t_j)$ for all $x,y\in B_j$. This defines a function $f_\CH:V\to \{0,1\}$. Since $f$ is constant on $\ast$-connected subsets of its domain, $f_\CH$ is constant on each of the connected components of $\CH$.  Using again that each $\widetilde \partial_{\mathrm{int}} A_i$ is $\ast$-connected and $\ast$-connected to all vertices in $A_i$, $f(x)=f_\CH(a_i)$ for all $x\in \widetilde \partial_{\mathrm{int}} A_i$, and $f$ can be uniquely extended to a function $g$ that takes the value $g(x):= f_\CH(a_i)$ on \emph{all} vertices of $A_i$. If we then set $g(y):=f_\CH(t_j)$ for all $y\in B_j$, then $g: \Lambda_n \to \{0,1\}$ is a function that is constant on $\ast$-connected components of $(\Lambda_n \setminus A)\cup (\cup_{i\le b}A_i)=(\Lambda_n\setminus A)\cup A=\Lambda_n$. Now we may notice that $\Lambda_n$ consists of a single $\ast$-connected component, hence $g$ must be constant everywhere. This implies that $f$ and hence $f_\CH $ must have been also a constant everywhere. This implies that $\CH$ consists of a single connected component and so $(\Lambda_n\setminus A) \cup (\cup_{i\le b}\widetilde \partial_{\mathrm{int}} A_i)$  is $\ast$-connected.
\end{proof}
\begin{proof}[Proof of Statement \ref{stat:disconn}]
Let $A_1,\ldots, A_b\in_1\CA_\mathrm{small}$, and denote $A:=\cup_{i\le b}A_i$.   We first assume $\beta\ge 1$.  We define the set of potential edges between the interior boundary of $A$ with respect to $\Z^d$ and the set of vertices outside $A$ within distance $\beta$ as
 \begin{equation}
  \Delta(A) := \big\{\{x,y\} \mid  x\in \cup_{i\le b}\widetilde\partial_\mathrm{int} A_i, y\in(\Lambda_n\setminus A): \|y-x\|\in[1, \beta]\big\}.\nonumber
 \end{equation}
 Considering the event on the left-hand side of \eqref{eq:stat-isolation} in Statement~\ref{stat:disconn}, there should be no edges in $\CG_n$ between $(\cup_{i\le b}\widetilde\partial_\mathrm{int}A_i)$ and $\Lambda_n\setminus A$. In particular, all edges in $\Delta(A)$ must be absent. The distance $\beta$ is chosen so that all edges of this length are present in $\CG_n$ with probability $p$ by \eqref{eq:connection-prob-gen}. Hence,
 \begin{equation}\label{eq:1-p-deltaA}
  \Prob\Big((\cup_{i\le b}\widetilde\partial_\mathrm{int}A_i)\not\sim_{\CG_n}(\Lambda_n\setminus\cup_{i\le b} A_i)\Big)\le (1-p)^{|\Delta(A)|}.
 \end{equation}
 Our goal is to show that for some constant $c=c(d)>0$,
 \begin{equation}\label{eq:delta-goal}
  |\Delta(A)| \ge c\beta\sum_{i\in [b]}|\widetilde\partial_{\mathrm{int}}A_i|=c\beta\sum_{i\in [b]}m_i,
 \end{equation}
 which then immediately yields \eqref{eq:stat-isolation} in combination with \eqref{eq:1-p-deltaA} when $\beta\ge 1$.
 In what follows we estimate $|\Delta(A)|$. In order to do so, we will make use of the boundary $\partial_{\mathrm{int}}A_i$, i.e., the interior boundary with respect  to the box $\Lambda_n$.
 Using that all blocks in $\CA_{\mathrm{small}}$ have size at most $3n/4$ by definition in \eqref{eq:a-sets}, the conditions of the isoperimetric inequality in Claim \ref{claim:iso} are satisfied, and hence $\delta m_i \le  |\partial_{\mathrm{int}}A_i|\le m_i$ for all $i\le b$.
 Hence, \eqref{eq:delta-goal} is equivalent to showing that
 that there exists $c'=c'(d)>0$ such that for any $1$-disconnected blocks $A_1,\ldots, A_b\in_1\CA_\mathrm{small}$, with $A=\cup_{i\le b}A_i$
 \begin{equation}
  |\Delta(A)| \ge c'\beta\sum_{i\in[b]}|\partial_\mathrm{int} A_i|,
  \label{eq:bound-delta}
 \end{equation}
 since then \eqref{eq:delta-goal} holds with $c=c'\delta$.
In order to show \eqref{eq:bound-delta}, our first goal is to find enough pairs of vertices in $\Delta(A)$ around a linear fraction of vertices in  $\cup_{i\le b}\partial_\mathrm{int} A_i$.
 For this, we claim that a set  $\CT:=\{(x_\ell, y_\ell)\}_{\ell\ge 1}$ with the following properties exists:
 \begin{itemize}
  \item[(i)] $x_\ell\in\cup_i\partial_\mathrm{int}A_i$, $y_\ell\in\cup_i\partial_\mathrm{ext} A_i$, and $\|x_\ell-y_\ell\|=1$ for all $\ell\ge 1$;
  \item[(ii)] each vertex $z\in\Lambda_n$ appears at most once in a pair in $\CT$;
  \item[(iii)] $|\CT| \ge\sum_{i\in[b]}|\partial_\mathrm{int} A_i|/(2d)$.
 \end{itemize}
 Note that requirement (i) implies that all $(x_\ell, y_\ell) \in \CT$ are elements of $\Lambda_n\times \Lambda_n$.
 We now show that a set $\CT$ exists. Consider the following greedy algorithm: order the vertices in $\cup_{i} \partial_\mathrm{int} A_i$ in an arbitrary order, to obtain the list $(v_1, v_2, \dots, v_{M})$ with $M=\sum_{i\in[b]}|\partial_\mathrm{int} A_i|$. Since each $v_j$ is in $ \cup_i\partial_\mathrm{int}A_i $, for each $v_j$ there is at least one vertex $y_j\in \cup_i\partial_\mathrm{ext} A_i\subseteq (\Lambda_n\setminus A)$ with $\|v_j-y_j\|=1$ by Definition \ref{def:boundary} (recall that the sets $A_1, \dots, A_b$ are $1$-disconnected). Starting with $\CT_1:=\{(v_1, y_1)\}$,
 going through the ordering of $(v_j)_j$ one-by-one, append the pair $(v_j, y_j)$ to the list $\CT_{j-1}$, if and only if $y_j$ has not been contained in any pair of $\CT_{j-1}$ yet and so obtain $\CT_j$. Set then $\CT:=\CT_{M}$.
 Since any $y\in \cup_i\partial_\mathrm{ext} A_i$ neighbors at most $2d$ many interior boundary vertices, adding a certain pair $(v_j, y_j)$ only affects at most $2d-1$ other indices where a pair may not be added later.
 Hence,
 \begin{equation}\label{eq:T-size}
  |\CT|\ge \frac{1}{2d}\sum_{i\in[b]}|\partial_\mathrm{int} A_i|.
 \end{equation}
 Next, assume that $\beta \ge 2\sqrt{d}+2$ and set $R:=\lfloor (\beta-1)/\sqrt{d} \rfloor\ge 1$.
 Take any pair $(x_\ell, y_\ell)\in \CT$. Since $\cup_{i\le b}\widetilde \partial_{\mathrm{int}} A_i \cup (\Lambda_n\setminus A)$ is $\ast$-connected by Claim \ref{claim:complement-star-conn}, we may fix for each $x_\ell$  a self-avoiding path \begin{equation}\label{eq:selfavoiding}\pi(x_\ell)=(x_\ell, z_1^{\sss{(x_\ell)}},\dots, z_R^{\sss{(x_\ell)}})\subseteq \cup_{i\le b}\widetilde \partial_{\mathrm{int}} A_i \cup (\Lambda_n\setminus A)\end{equation} (since the set on the right-hand side contains $\widetilde\partial_\mathrm{int}\Lambda_n$, which has cardinality $\Theta(n^{(d-1)/d})$, the set on the right-hand side has size at least $R$ for $n$ sufficiently large, so such a self-avoiding path of length $R$ then exists).  
 By the triangle inequality,
 \begin{equation}\label{eq:triangle}
  \|x_\ell-z_j^\sss{(x_\ell)}\|\le \sqrt{d}R\le \beta, \quad \mbox{and} \quad \|y_\ell-z_j^\sss{(x_\ell)}\|\le \sqrt{d}R+1\le \beta.
 \end{equation}
 Define now the type $\mathrm{typ}(z_j^\sss{(x_\ell)}):=x_\ell$ if $z_j^\sss{(x_\ell)}\in (\Lambda_n\setminus A)$ and set $\mathrm{typ}(z_j^\sss{(x_\ell)}):=y_\ell$ if $z_j^\sss{(x_\ell)}\in \cup_{i\le b}\widetilde \partial_{\mathrm{int}} A_i$.
 Then define the set of (unordered) pairs representing potential edges in $\CG_n$
 \begin{equation}\label{eq:delta-x-y} \Delta(x_\ell,y_\ell):=\Big\{ \{z_1^\sss{(x_\ell)}, \mathrm{typ}(z_1^\sss{(x_\ell)})\}, \dots, \{z_R^\sss{(x_\ell)}, \mathrm{typ}(z_R^\sss{(x_\ell)})\}\Big\}\subseteq \Delta(A).\end{equation}
 The inclusion holds since for each of these pairs, exactly one element is in $\Lambda_n\setminus A$ and the other one is in $\cup_{i\le b}\widetilde \partial_{\mathrm{int}} A_i$, and the distance between the two vertices of each pair is at most $\beta$ by \eqref{eq:triangle}.
 We claim that
 \begin{equation}\label{eq:delta-a-x-y}
  |\Delta(A)|\ge \Big|\bigcup_{(x_\ell, y_\ell)\in \CT} \Delta(x_\ell,y_\ell)\Big| \ {\buildrel \diamond \over \ge }\ (1/2)\sum_{\ell=1}^{|\CT|} |\Delta(x_\ell,y_\ell)|= |\CT| \cdot R/2. 
  \end{equation}
 To see the inequality with $\diamond$, we show that each potential edge $\{z,z'\}\in \Lambda_n\times \Lambda_n$ appears at most twice in a set in the union in the middle. 
 Consider $\{z,z'\}\in\Lambda_n\times\Lambda_n$. First, assume that there exists $\ell$ such that $(z,z')\!=\!(x_\ell, y_\ell) \in \CT$ or $(z',z)\!=\!(x_\ell, y_\ell)\in\CT$. Without loss of generality, we assume that the pair is ordered such that $(z,z')\!=\!(x_\ell, y_\ell)\in\CT$. Then there is no $(x_j, y_j)\in\CT$ different from $(x_\ell, y_\ell)$ such that $\{z,z'\}\in\Delta(x_j, y_j)$, since each element in $\Delta(x_j, y_j)$ contains either $x_j$ or $y_j$, which are different from $x_\ell$ and from $y_\ell$ by requirement (ii) in the construction of $\CT$.
 Moreover, the element $\{z,z'\}\!=\!\{x_\ell, y_\ell\}$ is contained at most once in the set $\Delta(x_\ell, y_\ell)$, since the first coordinates in \eqref{eq:delta-x-y} are all different as they form a self-avoiding path, and the first coordinates do no not contain $x_\ell=z$ by \eqref{eq:selfavoiding}. So, if $(z, z')\in \CT$, then this pair of vertices only appears once in 
$\big|\cup_{(x_\ell, y_\ell)\in \CT} \Delta(x_\ell,y_\ell)\big|$. 

 Next, assume that $(z,z'),(z',z)\notin\CT$, but $\{z,z'\}$ is contained in some $\Delta(x_\ell, y_\ell)$. Then, either $z$ or $z'$ must be equal to either  $x_\ell$ or to $y_\ell$ by \eqref{eq:delta-x-y}. Assume without loss of generality that $z \in \{x_\ell, y_\ell\}$, and therefore $z' \notin \{x_\ell, y_\ell\}$. Thus, $\{z,z'\}$ is contained exactly once in $\Delta(x_\ell, y_\ell)$.
 The only way that $\{z, z'\}$ could be in a set $\Delta(x_{\ell'}, y_{\ell'})$ for  some $(x_{\ell'}, y_{\ell'})\neq (x_\ell, y_\ell)$, is when $z' \in \{x_{\ell'}, y_{\ell'}\}$ and $(x_{\ell'}, y_{\ell'})\in\CT$ for some $\ell' \neq \ell$. (This can only happen if the self-avoiding path from $x_\ell$ passes through either $x_{\ell'}$ or through $y_{\ell'}$ \emph{and} the self-avoiding path from $x_{\ell'}$ passes through either $x_{\ell}$ or $y_{\ell}$). This argument implies that the element $\{z,z'\}$ can be contained at most twice in a set in the union in \eqref{eq:delta-a-x-y}, namely in $\Delta(x_\ell, y_\ell)$ and $\Delta(x_{\ell'}, y_{\ell'})$, and the inequality $\diamond$ in \eqref{eq:delta-a-x-y} holds.
  
 Combining \eqref{eq:delta-a-x-y} with \eqref{eq:T-size},  $R=\lfloor (\beta-1)/\sqrt{d} \rfloor$, and the assumption that $\beta\ge 2\sqrt{d}+2$, (see before \eqref{eq:triangle}), we arrive at
 \[
  |\Delta(A)| \ge \frac{R}{4d}\sum_{i\in[b]} |\partial_\mathrm{int} A_i|=\frac{1}{4d}\left\lfloor\frac{\beta-1}{\sqrt{d}}\right\rfloor\sum_{i\in[b]} |\partial_\mathrm{int} A_i|\ge\frac{\beta}{8d\sqrt{d}}\sum_{i\in[b]} |\partial_\mathrm{int} A_i|,
 \]
 since whenever $x\ge 2\sqrt{d}+2$, then $\lfloor (x-1)/\sqrt{d} \rfloor \ge   x/2\sqrt{d}$.

 This proves \eqref{eq:bound-delta} whenever $\beta \ge 2\sqrt{d}+2$. For the case $1\le\beta \le  2\sqrt{d}+2$, we use that each vertex on the interior boundary is within distance one from a vertex on the exterior boundary, hence
 \[
  |\Delta(A)| \ge \sum_{i\in[b]}|\partial_\mathrm{int} A_i|\ge \frac{\beta}{2\sqrt{d}+2}\sum_{i\in[b]}|\partial_\mathrm{int} A_i|,
 \]
 and so~\eqref{eq:bound-delta} holds for both cases with  $c'(d):=1/\max\{8d\sqrt{d}, 2\sqrt{d}+2\}$. This finishes the proof of Statement \ref{stat:disconn} when $\beta\ge 1$ for $c_{\ref{stat:disconn}}(d):=\delta c'(d)$.

 Assume now $\beta<1$. Each vertex on the interior boundary of $A=\cup_{i\le b}A_i$ is at distance $1$ from at least one vertex in $\Lambda_n\setminus A$ by definition. We again use that the conditions of the isoperimetric inequality in Claim~\ref{claim:iso} are satisfied, so that $|\delta_\mathrm{int}A_i|\ge \delta m_i$ for all $i\le b$. Each individual edge of length $1$ is absent with probability $(1-p\beta^{d\alpha})$. Thus, only excluding such edges, we obtain the second case of Statement \ref{stat:disconn} since $c_{\ref{stat:disconn}}(d)\le\delta$.
\end{proof}

\section{Counting holes}
We turn to the proof of Lemma \ref{lemma:holes}. We set up a few preliminaries about holes. Recall that $\CA$ denotes the family of $1$-connected blocks in $\Lambda_n$ from \eqref{def:calA}, and that
$\CA_\mathrm{large}=\{A\in\CA: |\bar A|>3n/4, \ |A|\le n/2\}$ from \eqref{eq:a-sets}. 
Recall also from Definition \ref{def:blocks} that the holes $\mathfrak{H}_A$ of a $1$-connected set $A\in\CA$ are the $1$-connected subsets of $\bar{A}\setminus A$. By Definition \ref{def:blocks}, each hole $H\in\mathfrak{H}_A$ is surrounded by $A$. This implies that $H$ does not intersect the boundary of the box $\widetilde\partial_\mathrm{int}\Lambda_n$. 
Hence, it follows
by Definition \ref{def:boundary} of the boundaries that for all  $H\in\mathfrak{H}_A$, 
\begin{equation}
\partial_\mathrm{int}H\,  =\, \widetilde\partial_\mathrm{int}H\, \subseteq\, \partial_\mathrm{ext} A.\label{eq:boundary-in-box-identity}
\end{equation}
Hence, comparing this to 
$\CE_2:=\CE_2(\CG_n)=\{\exists A\in\CA_\mathrm{large}: A\not\sim_{\CG_n}\partial_\mathrm{ext}A\}$ from \eqref{eq:def-event-2-holes}, we obtain that 
\begin{equation}\label{eq:e2-bound-1}
    \Prob\big(\CE_2\big)\le \Prob\big(\exists A\in\CA_\mathrm{large}: A\not\sim_{\CG_n} (\cup_{H\in\mathfrak{H}_A}\partial_\mathrm{int}H)\big).
\end{equation}
The following definition (and claim) of principal holes will ensure that the total size of the boundaries of the holes of $A$ in \eqref{eq:e2-bound-1} is sufficiently large compared to the combinatorial factor arising from the number of possible sets $A$ there.
\begin{definition}[Principal holes]\label{def:holes}
 Let $A\subseteq\Lambda_n$. A hole $H\in\mathfrak{H}_A$ has type $i\in\N$ if $|H|\in(2^{i-1}, 2^i]$. We write $\mathfrak{H}_A(i)\subseteq\mathfrak{H}_A$ for the set of holes of $A$ of type $i$. A hole-type $i$ is called \emph{principal} for a set $A$ if
 \begin{equation}
  |\mathfrak{H}_A(i)|=|\{H: H\in \mathfrak{H}_A(i)\}|\ge 2^{-i-3}i^{-2} n=:h_n(i). \label{eq-def:hni}
 \end{equation}
\end{definition}
Since $|\mathfrak{H}_A(i)|$ is an integer for all $i$, the inequality $|\mathfrak{H}_A(i)|\ge \lceil h_n(i)\rceil$ also holds  whenever the inequality in \eqref{eq-def:hni} holds. 
We define for $i\in\N$
\begin{equation}\label{eq:alarge-i}
  \CA_\mathrm{large}(i)  := \{A\in\CA_\mathrm{large}: |\mathfrak{H}_A(i)|\ge \lceil h_n(i)\rceil \big\},                                       \end{equation}
and observe that $A$ might appear in both $\CA_\mathrm{large}(i)$ and $\CA_\mathrm{large}(j)$ if both type $i$ and type $j$ are principal for $A$.   
We  define the following $\beta$-dependent constants: 
\begin{equation}
    R_2= R_2(\beta):=\big\lfloor \beta/\sqrt{d}\big\rfloor\vee 1,
    \qquad 
    i_\star=i_\star(\beta):=1+\left\lceil \tfrac{d}{d-1}\log_2 R_2\right\rceil.
    \label{eq:r-holes}
\end{equation}
Here, if two vertices are within $\|\cdot\|_\infty$-distance $R_2$ then they are within $\|\cdot\|_2$-distance $\beta$, hence they are connected by an edge with probability $p(1\wedge\beta)^{d\alpha}$ in $\CG_n$ by \eqref{eq:connection-prob-gen}. The definition of $i_\star$ ensures that any hole of type $i\ge i_\star$ has an exterior boundary of size at least $R_2$ (the exterior boundary of a hole is a subset of  $\partial_\mathrm{int}A$) to which the vertices on the interior boundary  of the hole (a subset of $\partial_\mathrm{ext}A$) should not be connected in $\CG_n$.
\begin{claim}[Large blocks have a principal hole-type]\label{claim:type-holes}
 For all $A\in\CA_\mathrm{large}$, there exists $i_A\le\lceil\log_2 n\rceil$ such that hole-type $i_A$ is principal for the set $A$, i.e.,
 \begin{equation} \label{eq:alarge-covered} \CA_\mathrm{large}\subseteq \bigcup_{i\le \lceil \log_2 n\rceil} \CA_{\mathrm{large}}(i). \end{equation}
 There exists a constant $c_{\ref{claim:type-holes}}=c_{\ref{claim:type-holes}}(d)>0$ such that for all $A\subseteq\Lambda_n$, with any $i_A$ being any principal hole-type for $A$,
 \begin{equation}
\sum_{H\in\mathfrak{H}_A(i_A)}|\partial_\mathrm{int} H_i|\ge c_{\ref{claim:type-holes}}\,i_A^{-2}\, 2^{-i_A/d}\, n.\label{eq-claim:type-holes-2}
 \end{equation}
 Moreover, for each hole $H$ with type $i\ge i_\star$ in \eqref{eq:r-holes},
 \begin{equation}
    |\partial_\mathrm{ext}H| \ge R_2.\label{eq:min-size-hole-boundary}
\end{equation}
\end{claim}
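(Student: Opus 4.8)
The plan is to prove the three assertions of Claim~\ref{claim:type-holes} separately, each via a short argument combining the isoperimetric inequality of Claim~\ref{claim:iso} with the hole-type bookkeeping of Definition~\ref{def:holes}. For the covering statement \eqref{eq:alarge-covered} I would argue by a volume count. Fix $A\in\CA_{\mathrm{large}}$. Since the holes of $A$ are, by Definition~\ref{def:blocks}, the maximal $1$-connected subsets of $\bar A\setminus A$, they are pairwise disjoint and partition $\bar A\setminus A$, so $\sum_{H\in\mathfrak H_A}|H|=|\bar A|-|A|>\tfrac{3n}{4}-\tfrac n2=\tfrac n4$ by the definition of $\CA_{\mathrm{large}}$ in \eqref{eq:a-sets}. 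Each hole satisfies $1\le |H|<n$, hence has a type in $\{1,\dots,\lceil\log_2 n\rceil\}$, and a type-$i$ hole has $|H|\le 2^i$. If no type were principal for $A$, i.e.\ $|\mathfrak H_A(i)|<h_n(i)=2^{-i-3}i^{-2}n$ for every $i$, then
\[
 \frac n4<\sum_{i=1}^{\lceil\log_2 n\rceil}\sum_{H\in\mathfrak H_A(i)}|H|\le\sum_{i\ge 1}2^i|\mathfrak H_A(i)|<\sum_{i\ge1}2^i\,2^{-i-3}i^{-2}n=\frac n8\sum_{i\ge1}\frac1{i^2}=\frac{\pi^2 n}{48}<\frac n4,
\]
a contradiction, so some $i_A\le\lceil\log_2 n\rceil$ is principal. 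This also explains the shape of $h_n(i)$: the factor $2^{-i}$ must absorb the bound $2^i$ on hole sizes and the weights $i^{-2}$ must be summable, so that $\sum_i 2^i h_n(i)<n/4$.

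For \eqref{eq-claim:type-holes-2} I would fix any principal type $i_A$ for $A$, so $|\mathfrak H_A(i_A)|\ge h_n(i_A)=2^{-i_A-3}i_A^{-2}n$, and lower-bound each summand. A type-$i_A$ hole $H$ has $|H|>2^{i_A-1}$; by Claim~\ref{claim:containment}(v) it equals its own closure, and being surrounded by $A$ it does not meet $\widetilde\partial_{\mathrm{int}}\Lambda_n$, so $\partial_{\mathrm{int}}H=\widetilde\partial_{\mathrm{int}}H$ by \eqref{eq:boundary-in-box-identity}. The unconditional inequality $(\star)$ of Claim~\ref{claim:iso} then gives $|\partial_{\mathrm{int}}H|=|\widetilde\partial_{\mathrm{int}}H|\ge|H|^{(d-1)/d}>2^{(i_A-1)(d-1)/d}$. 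Summing over the at least $h_n(i_A)$ holes of type $i_A$ and using $(d-1)/d-1=-1/d$,
\[
 \sum_{H\in\mathfrak H_A(i_A)}|\partial_{\mathrm{int}}H|\ \ge\ 2^{-i_A-3}i_A^{-2}n\cdot 2^{(i_A-1)(d-1)/d}\ =\ 2^{-3-(d-1)/d}\,i_A^{-2}\,2^{-i_A/d}\,n,
\]
which is \eqref{eq-claim:type-holes-2} with $c_{\ref{claim:type-holes}}=2^{-3-(d-1)/d}$.

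For \eqref{eq:min-size-hole-boundary} I would invoke the definition of $i_\star$ in \eqref{eq:r-holes}: $i_\star-1=\lceil\tfrac{d}{d-1}\log_2 R_2\rceil\ge\tfrac d{d-1}\log_2 R_2$, so a hole $H$ of type $i\ge i_\star$ has $|H|>2^{i-1}\ge 2^{i_\star-1}\ge R_2^{d/(d-1)}$. As before $H=\bar H$, and since $H$ is surrounded by $A$ its exterior boundary lies inside $\bar A\subseteq\Lambda_n$ (it is even a subset of $\partial_{\mathrm{int}}A$), so $\partial_{\mathrm{ext}}H=\widetilde\partial_{\mathrm{ext}}H$. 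Applying the inequality $(\star)$ of Claim~\ref{claim:iso} to $H$ gives $|\partial_{\mathrm{ext}}H|=|\widetilde\partial_{\mathrm{ext}}H|\ge|H|^{(d-1)/d}>R_2$, and since both sides are integers this yields $|\partial_{\mathrm{ext}}H|\ge R_2$.

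I do not expect a genuine obstacle: all three assertions reduce to Claim~\ref{claim:iso} together with hole-type bookkeeping. The one point that needs care is the calibration in the first part — the cutoff $h_n(i)$ must be chosen so that $\sum_i 2^i h_n(i)$ stays \emph{strictly} below the volume lower bound $|\bar A\setminus A|>n/4$ — and, in the last two parts, the routine but necessary observation that for a hole $H$ the boundaries with respect to $\Lambda_n$ and with respect to $\Z^d$ coincide, which is exactly the content of \eqref{eq:boundary-in-box-identity} and of $H$ being surrounded by $A$.
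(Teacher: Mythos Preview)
Your proof is correct and follows essentially the same route as the paper: a volume-count contradiction for \eqref{eq:alarge-covered}, the isoperimetric inequality $(\star)$ of Claim~\ref{claim:iso} applied holewise together with the lower bound $|\mathfrak H_A(i_A)|\ge h_n(i_A)$ for \eqref{eq-claim:type-holes-2}, and the same isoperimetric bound combined with the definition of $i_\star$ for \eqref{eq:min-size-hole-boundary}. Your added care in justifying $\partial_{\mathrm{int}}H=\widetilde\partial_{\mathrm{int}}H$ and $\partial_{\mathrm{ext}}H=\widetilde\partial_{\mathrm{ext}}H$ via \eqref{eq:boundary-in-box-identity}, and your explicit constant $c_{\ref{claim:type-holes}}=2^{-3-(d-1)/d}$, are welcome but do not deviate from the paper's argument.
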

 \begin{proof}
  We argue by contradiction for the first part.
  By definition of $\CA_{\mathrm{large}}$ in \eqref{eq:a-sets}, $|\bar A| \ge 3n/4$, and also  $|A|\le n/2$. Hence, the total size of the holes is at least $n/4$, i.e., $|\cup_{H\in \mathfrak{H}_A} H|=\sum_{H\in \mathfrak{H}_A} |H|\ge n/4$ hold.
  Suppose \eqref{eq-def:hni} holds in the opposite direction for all $i\ge 1$. Since the holes are $1$-disconnected, it follows from the size requirement in Definition \ref{def:holes} that
  \begin{align}
   \Big|\bigcup_{H\in\mathfrak{H}_A}H\Big| = \sum_{H\in\mathfrak{H}_A}|H| \le \sum_{i\ge 1}|\mathfrak{H}_A(i)|2^i \le \frac{n}{8}\sum_{i\ge 1}i^{-2} <n/4,\nonumber
  \end{align}
  since the sum converges to $\pi^2/6=1.64...<2$. This contradicts the assumption that the total size is at least $n/4$, so there must be at least one principal hole-type, say $i_A$. The restriction $i\le\lceil\log_2 n\rceil$ follows since the number of vertices in $\Lambda_n$ is $n$, and $H>2^{(\lceil\log_2 n\rceil+1)-1}$ thus can never be satisfied. This shows \eqref{eq:alarge-covered}.
  
  We turn to \eqref{eq-claim:type-holes-2}.
   As argued before \eqref{eq:boundary-in-box-identity}, holes do not intersect the boundary of the box $\partial_\mathrm{int}\Lambda_n$. So, $\partial_{\mathrm{ext}}H=\widetilde \partial_{\mathrm{ext}}H$, and  $|\partial_\mathrm{ext}H|\ge |H|^{(d-1)/d}$ by Claim \ref{claim:iso} for each hole. Combined with  $|H|> 2^{i_A-1}$ for all $H\in\mathfrak{H}_A(i_A)$ and the lower bound $|\mathfrak{H}_A(i_A)|\ge 2^{-i_A-3} i_A^{-2} n$ in \eqref{eq-def:hni}, 
    this yields that
  \[
  \begin{aligned}
   \sum_{H\in\mathfrak{H}_A(i_A)}|\partial_\mathrm{int} H|&\ge \sum_{H\in\mathfrak{H}_A(i_A)} |H|^{(d-1)/d} \ge   2^{-(d-1)/d}2^{i_A(d-1)/d}  |\mathfrak{H}_A(i_A)| \\ &\ge 
    2^{-(d-1)/d}2^{i_A(d-1)/d}\cdot 2^{-i_A-3}i_A^{-2}n
   \ge c_{\ref{claim:type-holes}}\,i_A^{-2}\,2^{-i_A/d}\,n,
  \end{aligned}\]
  for some constant $c_{\ref{claim:type-holes}}=c_{\ref{claim:type-holes}}(d)>0$. 
  Lastly, we prove \eqref{eq:min-size-hole-boundary}. Using again that $\partial_{\mathrm{ext}}H\ge |H|^{(d-1)/d}$ for each hole, we obtain for any hole $H$ with type $i\ge i_\star$,
\begin{equation}
    |\partial_\mathrm{ext}H| \ge |H|^{(d-1)/d} >  2^{(i_\star-1)(d-1)/d}
    \ge \big(2^{(\log_2R_2)d/(d-1)}\big)^{(d-1)/d}
    =  R_2.\nonumber
\end{equation}
This finishes the proof of the claim.
\end{proof}

We use the inclusion in \eqref{eq:alarge-covered} to bound the event on right-hand side in \eqref{eq:e2-bound-1} 
(with the convention that the empty sum from $1$ to $i_\star$ is $0$). After a union bound we arrive at
 \begin{align}
  \Prob\big(\CE_2\big) & \le \sum_{i=1}^{i_\star-1}\Prob\big(\exists A\in\CA_\mathrm{large}(i): A\not\sim_{\CG_n} \cup_{H\in\mathfrak{H}_A} \partial_\mathrm{int}H\big) \label{eq:event-2-splitted-1}
   \\&\hspace{15pt}+\sum_{i=i_\star}^{\lceil\log_2 n\rceil}\Prob\Big(\exists A\in\CA_\mathrm{large}(i): A\not\sim_{\CG_n} \cup_{H\in\mathfrak{H}_A} \partial_\mathrm{int}H\Big).\label{eq:event-2-splitted-2}
 \end{align}
We  now bound these two sums, the first one corresponding to \emph{small principal hole types}, the second one corresponding to \emph{large principal hole types}.
\subsection*{Excluding small principal hole types}
\begin{claim}[Sets with small principal hole-types are unlikely components]\label{stat:small-holes}
Let $\CG_n$ be long-range percolation on $\Lambda_n$ as in Definition \ref{def:lrp} with $d\ge 2$, $\alpha> 1$, and $i_\star(\beta)$ from \eqref{eq:r-holes}. 
 Then there exists $c_{\ref{stat:small-holes}}=c_{\ref{stat:small-holes}}(d)>0$ such that for all $\beta>0$ and $n$ sufficiently large,
 \begin{equation}
 \begin{aligned}
\mathrm{Err}_{\mathrm{small}}:= \sum_{i=1}^{i_\star-1}\Prob & \big(\exists A\in\CA_\mathrm{large}(i): A\not\sim_{\CG_n} \cup_{H\in\mathfrak{H}_A}\partial_\mathrm{int}H\big) \\
                            & \le
 \ind{\beta\ge2\sqrt{d}} 2^n\cdot\big((1-p)^{c_{\ref{stat:small-holes}}(1+\log_2\beta)^{-2}\beta^{(d-2)/(d-1)}}\big)^n
  .
 \end{aligned}\label{eq:small-prominent-last}
\end{equation}
\end{claim}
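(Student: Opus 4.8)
If $\beta<2\sqrt d$ then $R_2=1$ and $i_\star=1$ by~\eqref{eq:r-holes}, so the sum defining $\mathrm{Err}_\mathrm{small}$ is empty, matching the indicator on the right-hand side of~\eqref{eq:small-prominent-last}. Assume $\beta\ge 2\sqrt d$ henceforth. Fix $i<i_\star$ and $A\in\CA_\mathrm{large}(i)$, put $W_i:=\bigcup_{H\in\mathfrak H_A(i)}\partial_\mathrm{int}H$, and note $\{A\not\sim_{\CG_n}\cup_{H\in\mathfrak H_A}\partial_\mathrm{int}H\}\subseteq\{A\not\sim_{\CG_n}W_i\}$. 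The plan is to exhibit a family $\CD_A$ of \emph{distinct} unordered pairs $\{v,w\}$ with $v\in W_i$, $w\in A$, $\|v-w\|_2\le\beta$, and
\begin{equation}\label{eq:goal-DA}
 |\CD_A|\ \ge\ 2\,c_{\ref{stat:small-holes}}\,(1+\log_2\beta)^{-2}\,\beta^{(d-2)/(d-1)}\,n.
\end{equation}
Each pair in $\CD_A$ is a potential edge of $\CG_n$ of length at most $\beta$, hence present with probability $p$ by~\eqref{eq:connection-prob-gen}; on the event in question all of them are absent, so by independence its probability is at most $(1-p)^{|\CD_A|}$. Since $|\CA_\mathrm{large}(i)|\le 2^n$ and $i_\star-1$ is a constant (absorbable into the constant in~\eqref{eq:small-prominent-last} for $n$ large), a union bound over $A$ and over $i\le i_\star-1$ then yields~\eqref{eq:small-prominent-last}. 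It remains to construct $\CD_A$ obeying~\eqref{eq:goal-DA}.

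Throughout we use, for any hole $H$, that $|\partial_\mathrm{int}H|=|\widetilde\partial_\mathrm{int}H|\ge|H|^{(d-1)/d}$ and $|\widetilde\partial_\mathrm{ext}H|\ge\max\big(|H|^{(d-1)/d},\mathrm{diam}_\infty(H)\big)$ (the first bounds from Claim~\ref{claim:iso}, the second an elementary slicing argument), that $\widetilde\partial_\mathrm{ext}H=\partial_\mathrm{ext}H$ is $\ast$-connected and contained in $\partial_\mathrm{int}A\subseteq A$ (Claim~\ref{claim:containment}(v) and the remarks around~\eqref{eq:boundary-in-box-identity}), and that $|H|\le 2^{i_\star-1}<2R_2^{d/(d-1)}$ for $H\in\mathfrak H_A(i)$ — the only place where $i<i_\star$ is used, and what ultimately converts available boundary into the correct power of $\beta$. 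I split $\mathfrak H_A(i)$ into \emph{elongated} holes ($\mathrm{diam}_\infty(H)\ge R_3$, with $R_3:=\lfloor\beta/(10\sqrt d)\rfloor\vee1$) and \emph{compact} ones, and build $\CD_A$ from whichever class contains at least $\tfrac12|\mathfrak H_A(i)|\ge\tfrac12 h_n(i)$ of the type-$i$ holes. If elongated holes dominate, then for each such $H$ and each $v\in\partial_\mathrm{int}H$ I take, using that $\widetilde\partial_\mathrm{ext}H$ is $\ast$-connected with at least $R_3$ vertices, a connected set $S(v)\subseteq\widetilde\partial_\mathrm{ext}H$ with $|S(v)|=R_3$ containing a neighbour of $v$; then $S(v)\subseteq A$ lies within $\ell_\infty$-distance $R_3$, hence $\ell_2$-distance $R_3\sqrt d\le\beta$, of $v$, and the pairs $\{v,w\}$, $w\in S(v)$, are distinct (for fixed $v$ because $|S(v)|=R_3$, across $v$ because $v$ lies in a hole while $w\in A$). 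Summing $R_3|\partial_\mathrm{int}H|\ge R_3\,2^{(i-1)(d-1)/d}$ over the $\ge\tfrac12 h_n(i)$ elongated holes and using $R_3\ge c(d)\beta$, $2^{-i/d}\ge c(d)\beta^{-1/(d-1)}$ and $i<i_\star\le C(d)(1+\log_2\beta)$ yields~\eqref{eq:goal-DA}.

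If instead compact holes dominate: when $d=2$ the same per-hole construction with $|S(v)|=\min(R_2,|\widetilde\partial_\mathrm{ext}H|)\ge c|H|^{1/2}$ works, since $\sum_H|\partial_\mathrm{int}H|\,|H|^{1/2}\ge c\sum_H|H|\ge c\,h_n(i)\,2^{i-1}\ge c'\,i^{-2}n$, which is~\eqref{eq:goal-DA} because $\beta^{(d-2)/(d-1)}=1$. When $d\ge3$ each compact hole fits in an $\ell_\infty$-ball of radius $R_3$, so I assign it to the box of side $R_3$ through its lexicographically least vertex; at most $C(d)n/R_3^d$ boxes occur, so by pigeonhole at least half the compact holes lie in ``rich'' boxes containing $\ge c\,h_n(i)R_3^d/n$ of them. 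For $v\in\partial_\mathrm{int}H$ with $H$ in a rich box $Q$, every compact hole $H'$ with representative in $Q$ has $\widetilde\partial_\mathrm{ext}H'\subseteq A$ within $\ell_\infty$-distance $3R_3$ of $v$, and their union, up to a factor $2d$ for shared vertices, contributes $\ge c\,h_n(i)R_3^d\,2^{(i-1)(d-1)/d}/n$ distinct pairs to $\CD_A$; summing over all such $v$ gives $|\CD_A|\ge c(d)\,h_n(i)^2R_3^d\,2^{2(i-1)(d-1)/d}/n\ge c'(d)(\log_2\beta)^{-4}\beta^{\,d-2/(d-1)}n$, which beats~\eqref{eq:goal-DA} for $\beta$ large because $d-\tfrac{2}{d-1}-\tfrac{d-2}{d-1}=\tfrac{d(d-2)}{d-1}\ge\tfrac32$. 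All displayed estimates assumed $\beta$ large; for $2\sqrt d\le\beta<\beta_0(d)$ (so $i_\star\le C(d)$) one just lets $\CD_A$ contain one length-$1$ edge from each $v\in W_i$ to its neighbour in $\widetilde\partial_\mathrm{ext}H\subseteq A$, so $|\CD_A|\ge|W_i|\ge h_n(i)\ge c(d)n$, which exceeds the (bounded) target in~\eqref{eq:goal-DA} once $c_{\ref{stat:small-holes}}$ is small.

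I expect the geometric case split to be the crux. The $2^n$ union bound over $A$ forces~\eqref{eq:goal-DA} for \emph{every} admissible $A$, including extreme ``Swiss-cheese'' configurations whose many tiny holes may be placed adversarially, and no single mechanism extracts $\Omega\big((\log_2\beta)^{-2}\beta^{(d-2)/(d-1)}n\big)$ forced-absent edges from all of them: long holes must be handled by walking along their necessarily long boundary, while tiny holes must be handled by pigeonhole-packing into $\beta$-boxes. The remaining difficulties are routine bookkeeping: distinctness of the produced pairs (automatic once $v$ is always kept inside a hole and $w$ in $A$), calibrating the path lengths and box radii so that every produced pair has $\ell_2$-length at most $\beta$ (ensuring the factor $1-p$ rather than $1-p(1\wedge\beta/\|\cdot\|)^{d\alpha}$), and handling holes that straddle the box partition via representative vertices.
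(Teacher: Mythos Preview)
Your argument is essentially correct, but it is substantially more complicated than necessary because you missed the paper's key simplification. The set $A\in\CA_{\mathrm{large}}(i)$ is itself a single $1$-connected block, and its size is large: $|A|\ge|\widetilde\partial_\mathrm{int}A|\ge|\widetilde\partial_\mathrm{int}\bar A|\ge|\bar A|^{(d-1)/d}\ge(3n/4)^{(d-1)/d}$, so in particular $|A|\ge\lfloor\beta\rfloor$ once $n$ is large enough (depending on $\beta$). Hence for \emph{every} $x\in\cup_H\partial_\mathrm{int}H\subseteq\partial_\mathrm{ext}A$, either $A$ lies entirely in the $\ell_1$-ball of radius $\lfloor\beta\rfloor$ about $x$, or a $1$-path in $A$ from a neighbour of $x$ to a vertex outside that ball visits every intermediate $\ell_1$-level; either way $|\{y\in A:\|y-x\|_1\le\lfloor\beta\rfloor\}|\ge\lfloor\beta\rfloor$. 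This already gives $\lfloor\beta\rfloor\cdot|\cup_H\partial_\mathrm{int}H|\ge\lfloor\beta\rfloor\,\ell_i$ forced-absent edges of $\ell_2$-length at most $\beta$, with $\ell_i=c_{\ref{claim:type-holes}}i^{-2}2^{-i/d}n$ from~\eqref{eq-claim:type-holes-2}. The same $2^n$ union bound and the worst case $i=i_\star-1$ then finish the proof in a few lines.

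Your elongated/compact dichotomy with pigeonholing into $R_3$-boxes is needed precisely because you confine yourself to walking along $\widetilde\partial_\mathrm{ext}H$, which may be short when the hole is small; walking in $A$ instead removes this obstacle entirely. The machinery you set up does go through: the bound $|\widetilde\partial_\mathrm{ext}H|\ge\mathrm{diam}_\infty(H)$ (via $\ast$-connectedness of $\widetilde\partial_\mathrm{ext}H$ and the coordinate-span argument), the box-packing and rich-box pigeonhole, the $2d$-overlap bound for exterior boundaries of distinct holes, and the distinctness of the produced pairs all check out, and in the compact $d\ge3$ case the excess power $\beta^{d(d-2)/(d-1)}$ comfortably absorbs the extra $(\log_2\beta)^{-2}$ you incur. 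But contrary to your closing remark, the geometric case split is not ``the crux''---it is an artefact of restricting the search for forced-absent edges to hole boundaries rather than exploiting the large connected set $A$ that surrounds them.
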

The claim shows that the probability that large sets with small principal hole types appear as a  component of $\CG_n$, decays exponentially in $n$ whenever the base of the second exponential factor is strictly smaller than $1/2$. Note in particular that this error term is not present when $\beta<1$.
\begin{proof}
We may assume that $i_\star(\beta)\ge2$ in \eqref{eq:r-holes}, since otherwise the sum would be empty and the bound holds trivially. By definition of $i_\star$ and $R_2(\beta)$ in~\eqref{eq:r-holes}, we may therefore assume that $\log_2R_2>0$, which is equivalent to $R_2(\beta)=\lfloor \beta/\sqrt{d}\rfloor>1$ and $\beta\ge 2\sqrt{d}$. We assume $R_2(\beta)>1$ throughout the remainder of the proof. We start estimating a single summand on the left-hand side of \eqref{eq:small-prominent-last}.
Consider some $A\in \CA_{\mathrm{large}}(i)$. By Definition \ref{def:holes},
\begin{equation}\nonumber
 h_A:=|\mathfrak{H}_A|\ge |\mathfrak{H}_A(i)|\ge h_n(i) = 2^{-i -3}i^{-2}n.
\end{equation}
We now find enough potential edges that all must be absent in order for the event $\{A\not\sim_{\CG_n} \cup_{H\in\mathfrak{H}_A}\partial_\mathrm{int}H\} $ in \eqref{eq:small-prominent-last} to occur. By \eqref{eq-claim:type-holes-2} in Claim \ref{claim:type-holes}, since $i$ is a principal hole-type of $A\in\CA_\mathrm{large}(i)$
\begin{equation}\label{eq:ell-i}
|\cup_{H\in\mathfrak{H}_A}\partial_\mathrm{int}H|=\sum_{H\in \mathfrak{H}_A(i)}|\partial_\mathrm{int}H|\ge c_{\ref{claim:type-holes}}i^{-2}2^{-i/d}n=:\ell_i.
\end{equation} 
We now obtain a lower bound on $|A|$ using the isoperimetric inequality of $\Z^d$ in Claim \ref{claim:iso}.
By definition of  $\widetilde\partial_\mathrm{int}A$ in Definition \ref{def:boundary}, and since $\widetilde\partial_\mathrm{int}A\supseteq \widetilde\partial_\mathrm{int}\bar A$ by Claim \ref{claim:containment}(i), it follows from Claim \ref{claim:iso} applied to $\bar A $ that for all $A\in\CA_\mathrm{large}$, 
\begin{equation}
    |A|\ge |\widetilde\partial_\mathrm{int}A|\ge |\widetilde\partial_\mathrm{int}\bar{A}|\ge |\bar{A}|^{(d-1)/d}\ge (3/4)^{(d-1)/d}n^{(d-1)/d}.
   \nonumber
\end{equation}
Take now a vertex $x\in \partial_\mathrm{int}H\subseteq \partial_\mathrm{ext} A$. Then, since $|A|$ diverges with $n$ and $A$ is $1$-connected, whenever $n$ is sufficiently large compared to $\beta$,
\[|\{y\in A: \|y-x\|_1\le \lfloor\beta\rfloor\}| \ge \lfloor\beta\rfloor, \qquad \forall x\in \cup_{H\in \mathfrak{H}_A} \partial_{\mathrm{int}}H.\]
So, 
by \eqref{eq:ell-i}, 
\[ \big|\big\{\{x,y\}: x\in \cup_{H\in \mathfrak{H}_A} \partial_{\mathrm{int}}H, y\in A: \|y-x\|_1\le \lfloor\beta\rfloor\big\}\big|\ge \lfloor\beta\rfloor\cdot |\cup_{H\in \mathfrak{H}_A} \partial_{\mathrm{int}}H| \ge \lfloor\beta\rfloor\ell_{i}.\] 
These edges must be all absent in order for $\{A\not\sim_{\CG_n} \cup_{H\in\mathfrak{H}_A}\partial_\mathrm{int}H\}$ to occur for $A\in\CA_\mathrm{large}(i)$ in \eqref{eq:small-prominent-last}. The connection probability in \eqref{eq:connection-prob-gen} ensures that two vertices within $\ell_1$-distance $\lfloor\beta\rfloor$ are connected with probability $p$. Hence using a union bound and then the independence of edges, we obtain 
\begin{equation}
\begin{aligned}
 \Prob\big(\exists A\in\CA_\mathrm{large}(i): A\not\sim_{\CG_n} \cup_{H\in\mathfrak{H}_A}\partial_\mathrm{int}H\big) &\le \sum_{A\in \CA_\mathrm{large}(i)} \Prob\big(A\not\sim_{\CG_n} \cup_{H\in\mathfrak{H}_A}\partial_\mathrm{int}H\big) \\
 &\le \sum_{A\in \CA_\mathrm{large}(i)}  (1-p)^{\lfloor\beta\rfloor \ell_i}\le 2^n (1-p)^{\lfloor\beta\rfloor \ell_i},
 \end{aligned}\nonumber
\end{equation}
where we used that $\CA_\mathrm{large}(i)$ counts subsets of $\Lambda_n$, and the number of subsets of $\Lambda_n$ is at most $2^n$. This bounds a single summand in \eqref{eq:small-prominent-last}. To evaluate the sum, recalling that $\ell_i=c_{\ref{claim:type-holes}}i^{-2}2^{-i/d}n$ from \eqref{eq:ell-i}, and that $i_\star(\beta)=1+\lceil \tfrac{d}{d-1} \log_2R_2\rceil$ in \eqref{eq:r-holes} we have
\begin{equation}
\begin{aligned}
\mathrm{Err}_{\mathrm{small}} &\le \sum_{i=1}^{i_\star(\beta)-1} 2^n (1-p)^{\lfloor\beta\rfloor\ell_i}=2^n\sum_{i=1}^{i_\star(\beta)-1} (1-p)^{\lfloor\beta\rfloor c_{\ref{claim:type-holes}}i^{-2}2^{-i/d}}\\
&\le i_\star2^n
(1-p)^{n    c_{\ref{claim:type-holes}}2^{-1/d}\lfloor\beta\rfloor R_2^{-1/(d-1)}((\log_2R_2)d/(d-1)+1)^{-2}},
\end{aligned}\nonumber
\end{equation}
where for the last inequality we used that for all $i\le  i_\star-1=\lceil(\log_2R_2)d/(d-1)\rceil$, we have that $2^{-i/d}i^{-2}\ge 2^{-1/d}R_2^{-1/(d-1)}/(1+\log_2R_2)^2$. We recall that  we may assume $R_2(\beta)=\lfloor \beta/\sqrt{d}\rfloor>1$ by the reasoning at the beginning of the proof. So, the exponent of $\beta$ in the numerator becomes $1-1/(d-1)=(d-2)/(d-1)$ in \eqref{eq:small-prominent-last}. The prefactor $i_\star$ is a ($\beta$-dependent) constant by~\eqref{eq:r-holes}. Therefore, the statement in \eqref{eq:small-prominent-last} follows by adapting the constant in the exponent.
\end{proof}

\subsection*{Excluding large principal holes}
We turn to the sum in \eqref{eq:event-2-splitted-2}.
\begin{claim}[Sets with large principal hole-types are unlikely components]\label{stat:large-holes}
Let $\CG_n$ be long-range percolation on $\Lambda_n$ as in Definition \ref{def:lrp} with $d\ge 2$, $\alpha> 1$, and $i_\star(\beta)$ from \eqref{eq:r-holes}. 
 There exists $c_{\ref{stat:large-holes}}=c_{\ref{stat:large-holes}}(d)>0$ such that for all $n$ sufficiently large 
\begin{equation}\label{eq:large-holes-goal}
\begin{aligned}
\mathrm{Err}_{\mathrm{large}}&:=\sum_{i=i_\star}^{\lceil\log_2 n\rceil}\Prob\Big(\exists A\in\CA_\mathrm{large}(i):\, A\not\sim_{\CG_n} \cup_{H\in\mathfrak{H}_A} \partial_\mathrm{int}H\Big)\\
&\le \big(1-p(1\wedge\beta)^{d\alpha}\big)^{\lfloor1\vee(\beta/\sqrt d)\rfloor \cdot c_{\ref{stat:large-holes}}(\log n)^{-2}n^{(d-1)/d}}.
\end{aligned}
\end{equation}
whenever $p$ and $\beta$ guarantee that the base of $c_{\ref{stat:large-holes}}(\log n)^{-2}n^{(d-1)/d}$ is sufficiently small..
\end{claim}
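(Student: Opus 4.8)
\emph{Strategy.} We bound the $i$-th summand in \eqref{eq:event-2-splitted-2} separately for each $i\in\{i_\star,\dots,\lceil\log_2n\rceil\}$ --- this is precisely the range of \emph{large} principal hole-types left over by Claim~\ref{stat:small-holes} --- by a union bound over a cheaply encoded family of deterministic certificates, and then sum over the $\lceil\log_2n\rceil$ values of $i$ (which costs only an extra $\log n$ factor). Fix $i\ge i_\star$.

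\emph{Step 1: many $A$-free forced-absent edges.} For $A\in\CA_\mathrm{large}(i)$, by \eqref{eq-claim:type-holes-2} its type-$i$ holes satisfy $\sum_{H\in\mathfrak H_A(i)}|\partial_\mathrm{int}H|\ge c_{\ref{claim:type-holes}}\,i^{-2}2^{-i/d}n\ge c'(\log n)^{-2}n^{(d-1)/d}$ for all $i\le\lceil\log_2n\rceil$, so we may pick pairwise $1$-disconnected type-$i$ holes $H_1,\dots,H_N$ of $A$ with $\sum_{j\le N}|\partial_\mathrm{int}H_j|\ge c'(\log n)^{-2}n^{(d-1)/d}$ and $N$ minimal, which forces $N\le c''\big(1+(\log n)^{-2}(n2^{-i})^{(d-1)/d}\big)$ since $|\partial_\mathrm{int}H_j|\le 2^i$ (indeed $\ge 2^{(i-1)(d-1)/d}$). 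By Claim~\ref{claim:containment}(v) and \eqref{eq:boundary-in-box-identity}, each $\partial_\mathrm{ext}H_j=\widetilde\partial_\mathrm{ext}H_j$ is $\ast$-connected, and by \eqref{eq:min-size-hole-boundary} it has at least $R_2$ vertices. A short geometric fact --- any connected subset of $\Z^d$ with $\ge R_2$ vertices has at least $\tfrac12 R_2$ of them within $\ell_2$-distance $\beta$ of a given adjacent vertex, using $R_2=\lfloor\beta/\sqrt d\rfloor\vee1$ --- then gives, for each $x\in\partial_\mathrm{int}H_j$, at least $\tfrac12 R_2$ vertices $z\in\partial_\mathrm{ext}H_j$ with $\|x-z\|\le\beta$. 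Because $H_j$ being a hole of $A$ \emph{deterministically} implies $\partial_\mathrm{ext}H_j\subseteq A$ and $\partial_\mathrm{int}H_j\subseteq H_j\subseteq\Lambda_n\setminus A$, and because these pairs are all distinct across $j$ (one endpoint lies in a hole, the other in $A$, and the holes are $1$-disconnected), this produces at least $\tfrac12 R_2\sum_{j\le N}|\partial_\mathrm{int}H_j|\ge \tfrac12 c' R_2(\log n)^{-2}n^{(d-1)/d}$ \emph{distinct} vertex pairs, each of length $\le\beta$ (hence a $\CG_n$-edge with probability $\ge p(1\wedge\beta)^{d\alpha}$), all of which must be absent on the $i$-th event, and all determined by $(H_1,\dots,H_N)$ alone. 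Thus for each fixed admissible tuple $(H_1,\dots,H_N)$ the probability that \emph{some} $A\in\CA_\mathrm{large}(i)$ with these among its type-$i$ holes is $\CG_n$-isolated from $\cup_{H\in\mathfrak H_A}\partial_\mathrm{int}H$ is at most $\big(1-p(1\wedge\beta)^{d\alpha}\big)^{\frac12 R_2\sum_{j\le N}|\partial_\mathrm{int}H_j|}$.

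\emph{Step 2: counting the certificates --- the main obstacle.} It remains to count the admissible tuples. The crude bound $|\CA_\mathrm{large}(i)|\le 2^n$ is useless, as the gain in Step~1 is only of order $n^{(d-1)/d}$; and encoding the $N$ holes one by one at $\Theta(\log n)$ bits each costs $\Theta\big((\log n)^{-1}n^{(d-1)/d}\big)$, still exceeding the gain for fixed $\beta$. The plan is to reveal only a low-complexity \emph{skeleton}: (i) count the \emph{shapes} of the $H_j$ via Peierls' Lemma~\ref{lemma:peierl} (legitimate since $H_j=\bar H_j$), noting that the Peierls factor $e^{c_\mathrm{pei}|\partial_\mathrm{int}H_j|}$ is \emph{self-financing} --- reserving half of the $\tfrac12 R_2|\partial_\mathrm{int}H_j|$ forced edges of $H_j$ gives $e^{c_\mathrm{pei}|\partial_\mathrm{int}H_j|}\,(1-p(1\wedge\beta)^{d\alpha})^{\frac14 R_2|\partial_\mathrm{int}H_j|}\le e^{-c_\mathrm{pei}|\partial_\mathrm{int}H_j|}$ as soon as $(1-p(1\wedge\beta)^{d\alpha})^{R_2}$ is small enough --- so that the sum over shapes and boundary sizes of a single hole is $O(n)$ (just the position factor); and (ii) encode the hole \emph{locations} not individually but in \emph{grouped} form: partitioning $\Lambda_n$ at a scale adapted to $2^{i/d}$ and using the pigeon-hole principle, one argues that the cells relevant to $A$ can be arranged along a connected, hence Peierls-countable, structure, so that each hole is charged only $O_d(1)$ bits relative to it, and the total number of skeletons is $\exp\!\big(c\,(\log n)^{-2}n^{(d-1)/d}\big)$ with $c=c(d)$ \emph{independent of $\beta$ and $p$}; the $(\log n)^{-2}$ is the pigeon-hole artefact of the $i^{-2}$-weight and the cut-off $i\le\lceil\log_2n\rceil$ in Definition~\ref{def:holes}.

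\emph{Conclusion and obstacle.} Multiplying the skeleton count $\exp(c(d)(\log n)^{-2}n^{(d-1)/d})$ by the surviving isolation probability $\big(1-p(1\wedge\beta)^{d\alpha}\big)^{\frac14 c' R_2(\log n)^{-2}n^{(d-1)/d}}$ (the other half of the forced edges, kept after financing the Peierls factors in (i)), summing over $i\le\lceil\log_2n\rceil$, and absorbing constants yields \eqref{eq:large-holes-goal} with some $c_{\ref{stat:large-holes}}=c_{\ref{stat:large-holes}}(d)$, valid precisely when $\exp(4c(d)/(c'R_2))\cdot(1-p(1\wedge\beta)^{d\alpha})<1$, i.e.\ when the base of $c_{\ref{stat:large-holes}}(\log n)^{-2}n^{(d-1)/d}$, after absorbing the combinatorial factors, is sufficiently small --- which in particular holds for all sufficiently large $\beta$, since then $R_2\to\infty$. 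I expect step~2(ii), namely making the grouped/coarse-grained encoding of the hole locations rigorous with total bit-cost $O_d\big((\log n)^{-2}n^{(d-1)/d}\big)$ --- free of any dependence on $\beta$ --- while still pinning down $\gtrsim R_2(\log n)^{-2}n^{(d-1)/d}$ forced-absent edges, to be the genuine difficulty; Step~1, the self-financing Peierls count of shapes in 2(i), and the $A$-freeness of the forced edges are comparatively routine adaptations of Claim~\ref{stat:small-holes}, Statement~\ref{stat:disconn} and Lemma~\ref{lemma:peierl}.
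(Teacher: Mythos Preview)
Your Step~1 and Step~2(i) are essentially correct and match the paper. The gap is in Step~2(ii), which you yourself flag as the ``genuine difficulty'' and leave as a vague sketch (a coarse-grained partition ``arranged along a connected, hence Peierls-countable, structure''). You do not say what this structure is, why it exists, or why its complexity is $O_d\big((\log n)^{-2}n^{(d-1)/d}\big)$; as written, this is not a proof.

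The underlying problem is your choice of $N$. By taking $N$ \emph{minimal} so that $\sum_{j\le N}|\partial_\mathrm{int}H_j|$ just reaches $c'(\log n)^{-2}n^{(d-1)/d}$, you have artificially capped the isolation gain at its worst-case value over $i$ and then tried to match an $i$-dependent combinatorial cost to this fixed gain. This is what forces you into the coarse-graining you cannot carry out. The paper does the opposite: it takes \emph{all} $\lceil h_n(i)\rceil=\lceil 2^{-i-3}i^{-2}n\rceil$ type-$i$ holes in a canonical ordering. The isolation gain is then the full $i$-dependent quantity $R_2\cdot c_{\ref{claim:type-holes}}\,i^{-2}2^{-i/d}n$ (of order $n$ for constant $i$), and the location encoding is simply the crude binomial coefficient $\binom{n}{\lceil h_n(i)\rceil}\le (en/h_n(i))^{h_n(i)}=\exp\big((i+O(\log i))\cdot i^{-2}2^{-i-3}n\big)$. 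Since $d\ge2$, the ratio $2^{-i/d}\big/\big(i\cdot 2^{-i}\big)=2^{i(d-1)/d}/i\to\infty$, so the gain beats the binomial cost for all $i\ge i_\circ(d)$; for the finitely many $i<i_\circ$ one simply requires $(1-p(1\wedge\beta)^{d\alpha})^{R_2}$ small enough. No coarse-graining is needed. In short, the missing idea is: do not throw away holes to hit a fixed target --- keep the $i$-dependent gain and let it absorb the naive binomial encoding of the hole positions.
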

\begin{proof}
Similarly to the small principal hole-types, we will find enough potential edges that all must be absent in order for the events on the left-hand-side in \eqref{eq:large-holes-goal} to occur. 
Fix an ordering $L$ of vertices in $\Lambda_n$ so that $x_1<_Lx_2<_L\cdots<_Lx_n$ with respect to this ordering (e.g., the lexicographic ordering). For a block $A\in\CA_\mathrm{large}(i)$ (which has at least $\lceil h_n(i)\rceil$ holes of type $i$ by \eqref{eq-def:hni}), we order its holes $\mathfrak{H}_A$ in such a way that the holes of type $i$ are $H_A^\sss{(1)},\ldots, H_A^\sss{|\mathfrak{H}_A(i)|}$,  and that for all $r<s\le |\mathfrak{H}_A(i)|$ the vertices smallest in the ordering within $H_A^\sss{(r)}$ and $H_A^\sss{(s)}$ -- say $x_r\in H_A^\sss{(r)}$ and $x_s\in H_A^\sss{(s)}$ -- satisfy $x_r<_Lx_s$. 
We obtain an upper bound when we exclude edges from $A$ towards only its first $\lceil h_n(i)\rceil$ holes of type $i$:
\begin{equation}
\begin{aligned}
\Prob  \big(\exists A\in\CA_\mathrm{large}(i)&:  A\not\sim_{\CG_n} \cup_{H\in\mathfrak{H}_A} \partial_\mathrm{int}H\big)\\
&\le 
\Prob  \big(\exists A\in\CA_\mathrm{large}(i): A\not\sim_{\CG_n} \cup_{j\le \lceil h_{n}(i)\rceil} \partial_\mathrm{int}H_A^\sss{(j)}\big)\\
&\le \Prob  \Big(\exists A\in\CA_\mathrm{large}(i): \cup_{j\le \lceil h_{n}(i)\rceil}\partial_\mathrm{ext}H_A^\sss{(j)}\not\sim_{\CG_n} \cup_{j\le \lceil h_{n}(i)\rceil} \partial_\mathrm{int}H_A^\sss{(j)}\Big),\label{eq:A-to-A'-2}
\end{aligned}
\end{equation}
where to get the last row we only look at edges emanating from $A$ that are on the exterior boundaries of the holes. This is an upper bound since $\partial_\mathrm{ext}H_A^\sss{(j)}\subseteq A$ by \eqref{eq:boundary-in-box-identity} for all $j\le \lceil h_n(i)\rceil$.
If for two blocks $A, A'\in\CA_\mathrm{large}(i)$, the first $\lceil h_n(i)\rceil$ holes coincide, also the exterior boundaries of these first $\lceil h_n(i)\rceil$ holes coincide, and the event in \eqref{eq:A-to-A'-2} excludes the exact same edges.
So, a simple union bound over $A$ in \eqref{eq:A-to-A'-2} would overcount the non-presence of those edges too many times. Instead, we carry out a union bound over all possible lists of the first $\lceil h_n(i)\rceil$ holes. To this end, we consider for all $A\in \CA_\mathrm{large}(i)$ the following:
\begin{equation}\label{eq:DA}
 D_i(A):=\Lambda_n\setminus\{ \cup_{j\le \lceil h_n(i)\rceil}H_{A}^{\sss{(j)}}\}, \qquad
\CD(i):=\{ D:   \exists A\in \CA_{\mathrm{large}}(i), D=D_i(A)\}.
 \end{equation}
In words, the set $D_i(A)$ has exactly $\lceil h_n(i)\rceil$ many holes, all of which are type $i$, and its holes coincide with the first $\lceil h_n(i)\rceil$ type-$i$ holes of $A$. $\CD(i)$ collects all sets $D$ that arise in this way for some $A \in \CA_{\mathrm{large}}(i)$.
Since $D_i(A)$ shares the first $\lceil h_n(i)\rceil$ type-$i$ holes with $A$, also the exterior boundaries of those holes agree between $A$ and $D_i(A)$. So,
\begin{equation} 
\begin{aligned}
  \big\{\cup_{j\le \lceil h_{n}(i)\rceil}\partial_\mathrm{ext}H_{D_i(A)}^\sss{(j)}\not\sim_{\CG_n}& \cup_{j\le \lceil h_{n}(i)\rceil} \partial_\mathrm{int}H_{D_i(A)}^\sss{(j)}\big\}\\
  &= \big\{\cup_{j\le \lceil h_{n}(i)\rceil}\partial_\mathrm{ext}H_A^\sss{(j)}\not\sim_{\CG_n} \cup_{j\le \lceil h_{n}(i)\rceil} \partial_\mathrm{int}H_A^\sss{(j)}\big\}.\nonumber%
  \end{aligned}
\end{equation}
Hence, in \eqref{eq:A-to-A'-2} we can group the blocks in $\CA_\mathrm{large}(i)$ that all map to the same $D\in \CD(i)$, and  obtain that
\begin{align}
\Prob  \Big(\exists A\in\CA_\mathrm{large}(i)&: \cup_{j\le \lceil h_{n}(i)\rceil}\partial_\mathrm{ext}H_A^\sss{(j)}\not\sim_{\CG_n} \cup_{j\le \lceil h_{n}(i)\rceil} \partial_\mathrm{int}H_A^\sss{(j)}\Big), \nonumber\\
&=  \Prob  \Big(\exists D\in \CD(i): \cup_{j\le \lceil h_{n}(i)\rceil}\partial_\mathrm{ext}H_{D}^\sss{(j)}\not\sim_{\CG_n}  \cup_{j\le \lceil h_{n}(i)\rceil} \partial_\mathrm{int}H_{D}^\sss{(j)}\Big)\nonumber\\
&\le \sum_{D\in\CD(i)}\Prob\Big(\cup_{j\le \lceil h_{n}(i)\rceil}\partial_\mathrm{ext}H_{D}^\sss{(j)}\not\sim_{\CG_n} \cup_{j\le \lceil h_{n}(i)\rceil} \partial_\mathrm{int}H_{D}^\sss{(j)}\Big). \label{eq:aprimelarge}
\end{align}
 We combine the following three observations to bound a single summand in the last row. First,
each vertex $x\in \partial_\mathrm{int}H_{D}^\sss{(j)}$ is at distance one from at least one vertex $y_x\in \partial_\mathrm{ext}H_{D}^\sss{(j)}\subseteq D\subseteq \Lambda_n$ (by definition, a hole $H_{D}^\sss{(j)}$ does not intersect $\widetilde \partial_{\mathrm{int}}\Lambda_n$). Second, for all $j\le \lceil h_n(i)\rceil$, $|\partial_\mathrm{ext}H_{D}^\sss{(j)}|\ge R_2$ by \eqref{eq:min-size-hole-boundary} and the fact that $i\ge i_\star$. Third, the exterior boundary of a hole $\partial_\mathrm{ext}H_{D}^\sss{(j)}$ is $\ast$-connected by Claim \ref{claim:containment}(v). 

Recall $R_2=\lfloor \beta/\sqrt{d}\rfloor \vee 1\ge 1$ from \eqref{eq:r-holes}.
Hence, for each vertex $x\in \partial_\mathrm{int}H_{D}^\sss{(j)}$, starting from $y_x\in \partial_\mathrm{ext}H_{D}^\sss{(j)} $, one can find a $\ast$-connected set of vertices $\CB_x\subseteq \partial_\mathrm{ext}H_{D}^\sss{(j)} $ that satisfies
\[ |\CB_x| \ge R_2, \quad\mbox{and}\quad\forall z\in \CB_x:\  \|x-z\|_2\le \beta\vee 1. \]
Here we used that $\sqrt{d}$ is the maximal $\|\cdot\|_2$-distance between $\ast$-adjacent vertices, which gives $\ell_2$-distance at most $\beta$ when $\beta\ge 1$. When $\beta<1$, $R_2=1$ and the $\ell_2$-distance between $y_x$ and $x$ is $1$ by Definition \ref{def:boundary}.
Then, the edges $\{ \{x,z\}: x\in \cup_{j\le \lceil h_{n}(i)\rceil}\partial_\mathrm{int}H_{D}^\sss{(j)}, z\in \CB_x \}$  all need to be absent for 
the event in \eqref{eq:aprimelarge} to occur.  When $\beta\ge 1$, the distance bound $\|x-z\|\le \beta$ ensures that all these edges are present with probability $p$ by \eqref{eq:connection-prob-gen}. When $\beta<1$, the edge $\{x,y_x\}$ is not present with probability $1-p\beta^{d\alpha}$.
Combining the two cases with   \eqref{eq:A-to-A'-2} and \eqref{eq:aprimelarge}, it follows by the independence of the edges in $\CG_n$ that
\begin{equation}\label{eq:both-betas}
\begin{aligned}
\Prob  \big(\exists A\in\CA_\mathrm{large}(i): A\not\sim_{\CG_n} &\cup_{H\in\mathfrak{H}_A}\partial_\mathrm{int}H\big)
\\
&\le \sum_{D\in\CD(i)}\prod_{j\le \lceil h_n(i)\rceil}\big(1-p(1\wedge\beta)^{d\alpha}\big)^{|\partial_\mathrm{int} H_{D}^\sss{(j)}|\cdot\lfloor1\vee(\beta/\sqrt d)\rfloor}. 
\end{aligned}
\end{equation}
We  now encode the holes of $D\in\CD(i)$  similar to the encoding of the blocks in Section \ref{sec:spanning}.
We write $\mathbf{x}_{D}:=(x_1,\ldots, x_{\lceil h_n(i)\rceil})$ for the vertices with the smallest label in the $L$-ordering within the respective holes
 $H^{\sss{(1)}}_{D},\ldots, H^{\sss{\lceil h_n(i)\rceil}}_{D}$. 
  Let us write $\Lambda_{n,<}^{\lceil h_n(i)\rceil}$ for the vectors $\mathbf{x}\in\Lambda_n^{\lceil h_n(i)\rceil}$ with $x_r<_L x_s$ for all $r<s$. By the initial ordering of the holes above \eqref{eq:A-to-A'-2}, $\mathbf{x}_{D}\in \Lambda_{n,<}^{\lceil h_n(i)\rceil}$. 
  
 Let then $m_j:=|\partial_\mathrm{int}H_D^{\sss{(j)}}|$ for all $j\le \lceil h_n(i)\rceil$, and  write $\mathbf{m}_D:=(m_1, \dots, m_{\lceil h_n(i)\rceil})$. 
Define then for all $\mathbf{x}\in\Lambda_{n,<}^{\lceil h_n(i)\rceil}$, and $\mathbf{m}\in\N^{\lceil h_n(i)\rceil}$:
 \[ \CD(i, \mathbf{x},\mathbf{m}):=\{D\in \CD(i): \mathbf{x}_{D}=\mathbf{x}, \mathbf{m}_{D}=\mathbf{m} \}. \]
  The set $D\in \CD(i)$ has the first $\lceil h_n(i)\rceil$ holes of some $A\in \CA_{\mathrm{large}}(i)$ where $D(A)=D$, and for that $A$, hole-type $i$ was principal in terms of Definition \ref{def:holes} and \eqref{eq:alarge-i}. Definition \ref{def:holes} readily implies that the inequality \eqref{eq-claim:type-holes-2} in Claim \ref{claim:type-holes} holds for the first $\lceil h_n(i)\rceil$ many holes of $A$, and in turn of $D$ with $i_A$ replaced by $i$ in \eqref{eq-claim:type-holes-2}.  Hence,  the total interior boundary size $m:=\sum_{j=1}^{\lceil h_n(i)\rceil} m_j$ satisfies that  $m \ge c_{\ref{claim:type-holes}}i^{-2}2^{-i/d}n$. So for $m\ge  c_{\ref{claim:type-holes}}i^{-2}2^{-i/d}n$ we introduce the possible boundary-length vectors with total size $m$:
  \begin{equation}\label{eq:sum-m}
 \CM_i(m):=\big\{\mathbf{m}\in\N^{\lceil h_n(i)\rceil}:  m_1+\ldots+m_{\lceil h_n(i)\rceil}=m\big\}.
  \end{equation}
 Returning to \eqref{eq:aprimelarge}, we decompose the summation on the right-hand side as follows:
\begin{align}
\Prob  &\big(\exists A\in\CA_\mathrm{large}(i): A\not\sim_{\CG_n} \cup_{H\in\mathfrak{H}_A}\partial_\mathrm{int}H\big)\nonumber\\
&\le \!\!\!\!
\sum_{m\ge c_{\ref{claim:type-holes}}i^{-2}2^{-i/d}n}\sum_{\mathbf{m}\in\CM_i(m)}
\sum_{\mathbf{x}\in\Lambda_{n,<}^{\lceil h_n(i)\rceil}}
\sum_{D\in\CD(i,\mathbf{x}, \mathbf{m})}\prod_{j\le \lceil h_n(i)\rceil}\hspace{-5pt}\big(1-p(1\wedge\beta)^{d\alpha}\big)^{|\partial_\mathrm{int} H_{D}^\sss{(j)}|\cdot \lfloor1\vee(\beta/\sqrt d)\rfloor} \nonumber\\
&=
\sum_{m\ge c_{\ref{claim:type-holes}}i^{-2}2^{-i/d}n}\big(1-p(1\wedge\beta)^{d\alpha}\big)^{m\lfloor1\vee(\beta/\sqrt d)\rfloor} 
\sum_{\mathbf{m}\in\CM_i(m)}\sum_{\mathbf{x}\in\Lambda_n^{\lceil h_n(i)\rceil,<}}
\sum_{D\in\CD(i,\mathbf{x}, \mathbf{m})}1
.\label{eq:di-counting-1}
\end{align}
Now we evaluate the number of terms of the last three summations in~\eqref{eq:di-counting-1}. 
Each block $D\in\CD(i, \mathbf{x}, \mathbf{m})$ is uniquely characterized by its $\lceil h_n(i)\rceil$ holes by \eqref{eq:DA} (since these are the only holes of $D$). Having fixed the vectors $\mathbf{x}$ and $\mathbf{m}$, we apply  Lemma \ref{lemma:peierl} ---Peierls' argument--- to each hole $H_{D}^{\sss{(j)}}$ with $H_{D}^{\sss{(j)}}\ni x_j$ and $|\partial_{\mathrm{int}}H_{D}^{\sss{(j)}}|=m_j$ to count the size of $\CD(i, \mathbf{x}, \mathbf{m})$. 
The lemma can be applied since for each hole $H$, we have $H=\bar H$ by Claim \ref{claim:containment}(v).
Hence,  there are at most $\exp(c_\mathrm{pei}m_j)$ possible holes of interior boundary size $m_j$ containing $x_j$. So, for all $\mathbf{x}\in\Lambda_n^{\lceil h_n(i)\rceil}$ and $\mathbf{m}\in \CM_i(m)$, 
\[ 
|\CD(i,\mathbf{x}, \mathbf{m})| \le \prod_{j\le \lceil h_n(i)\rceil }\exp\big(c_\mathrm{pei}m_j\big)=\exp(c_\mathrm{pei}m).
\]
Moreover, by~\eqref{eq:sum-m}, $m=m_1+\ldots+m_{\lceil h_n(i)\rceil}\ge \lceil h_n(i)\rceil$, and so $|\CM_i(m)| \le \binom{m+\lceil h_n(i)\rceil}{m}\le 2^{2m}\le \re^{2m}$.  Next, since the vertices in $\mathbf x$ are ordered, there are at most 
$\binom{n}{\lceil h_n(i)\rceil}$ many choices for the vector $\mathbf x\in \Lambda_n^{\lceil h_n(i)\rceil, <}$. 
Using these bounds in~\eqref{eq:di-counting-1}, and that $R_2=\lfloor \beta/\sqrt{d}\rfloor\vee 1$, we obtain
\begin{equation}\nonumber
\begin{aligned}
\Prob  \big(\exists A\in\CA_\mathrm{large}(i)&: A\not\sim_{\CG_n} \cup_{H\in\mathfrak{H}_A}\partial_\mathrm{int}H\big)\\
&\le
\binom{n}{\lceil h_n(i)\rceil}\sum_{m\ge c_{\ref{claim:type-holes}}i^{-2}2^{-i/d}n}\bigg(\re^{2+c_{\mathrm{pei}}}\cdot\big(1-p(1\wedge\beta)^{d\alpha}\big)^{\lfloor1\vee(\beta/\sqrt d)\rfloor}\bigg)^m,
\end{aligned}
\end{equation}
The constant $c_\mathrm{pei}$ depends only on $d$. In what follows, we assume that $p$ and $\beta$ are such that the first factor $\re^{(c_\mathrm{pei}+2)}$ is at most the second factor to the power $-1/2$ (equivalently, the second factor is at most $\re^{-2(c_\mathrm{pei}+2)}$).
Then, the summands decay in $m$ and the sum is dominated by its first term. This gives for some $C>0$ and $n$ sufficiently large that 
\begin{align}
\Prob  \big(\exists A\in\CA_\mathrm{large}(i)&: A\not\sim_{\CG_n} \cup_{H\in\mathfrak{H}_A}\partial_\mathrm{int}H\big)\nonumber\\
&\le C\binom{n}{\lceil h_n(i)\rceil}\big(1-p(1\wedge\beta)^{d\alpha}\big)^{\lfloor1\vee(\beta/\sqrt d)\rfloor c_{\ref{claim:type-holes}}i^{-2}2^{-i/d-1}n}\label{eq:aprime-two-factors}.
\end{align}
 We now bound  the binomial coefficient as  $\binom{n}{h}\le n^h/h!$, and then $\re^h =\sum_{i=0}^\infty h^i/i! \ge h^h/h!$ which implies $h!\ge(h/\re)^h$. This estimate substituted into the bound on the binomial coefficient gives $\binom{n}{h}\le (\re \cdot n/h)^h$. Using that $h=\lceil h_n(i)\rceil =\lceil 2^{-i-3}i^{-2}n\rceil$ in \eqref{eq-def:hni} gives
 \begin{align}\label{eq:binom-coeff}
 \binom{n}{\lceil h_n(i)\rceil}\le (\re2^{i+3}i^2)^{i^{-2}2^{-i-3}n+1}
 \le
  \exp\Big((i+4+2\log i)\big(i^{-2}2^{-i-3}n+1\big)\Big),
\end{align}
where we also used that $2<\re$ to obtain the right-hand-side. Using this bound in the right-hand side of \eqref{eq:aprime-two-factors}, we may compare the exponents.
Let $i_\circ=i_\circ(d)$ be the smallest $i\in\N$ such that for \emph{all} $n\ge 1$ and all $i\ge i_\circ$,
\[ (i+4+2\log i)(i^{-2}2^{-i-3}n+1)<c_{\ref{claim:type-holes}}i^{-2}2^{-i/d}n.\]
Then for $i\ge i_\circ$, 
\begin{align}
 \Prob \big(\exists A\in\CA_\mathrm{large}(i)&: A\not\sim_{\CG_n} \cup_{H\in\mathfrak{H}_A}\partial_\mathrm{int}H\big)\nonumber\\
 & \le C \Big(\re \cdot \big(1-p(1\wedge\beta)^{d\alpha}\big)^{\lfloor1\vee(\beta/\sqrt d)\rfloor}\Big)^{c_{\ref{claim:type-holes}}i^{-2}2^{-i/d-1}n}\nonumber \\
 &\le C\big(1-p(1\wedge\beta)^{d\alpha}\big)^{\lfloor1\vee(\beta/\sqrt d)\rfloor c_{\ref{claim:type-holes}}i^{-2}2^{-i/d-2}n},\label{eq:111}
 \end{align}
 where the last bound follows from the assumption on the second factor between brackets in the second line being smaller than $\re^{-2(c_\mathrm{pei}+2)}$ above~\eqref{eq:aprime-two-factors}.

Now we treat the case $i<i_\circ$. Using that $i_\circ$ is a constant that only depends on $d$, (comparing the coefficients of $c_{\ref{claim:type-holes}}n$ in  \eqref{eq:aprime-two-factors} to \eqref{eq:binom-coeff}) we require that $p,\beta$ are such that for all $i< i_\circ$ and all $n\ge 1$,
\begin{equation}
\exp\big((i+4+2\log i)(i^{-2}2^{-i-3}n+1)\big)\le \big(1-p(1\wedge\beta)^{d\alpha}\big)^{-\lfloor1\vee(\beta/\sqrt d)\rfloor c_{\ref{claim:type-holes}}i^{-2}2^{-i/d-3}n}.
\end{equation}
In this case we obtain that for all $i< i_\circ$
\begin{equation}\label{eq:222}
\begin{aligned}
\Prob  \big(\exists A\in\CA_\mathrm{large}(i)&: A\not\sim_{\CG_n} \cup_{H\in\mathfrak{H}_A}\partial_\mathrm{int}H\big) \\
&\le C\big(1-p(1\wedge\beta)^{d\alpha}\big)^{\lfloor1\vee(\beta/\sqrt d)\rfloor c_{\ref{claim:type-holes}}i^{-2}2^{-i/d-3}n}. 
\end{aligned}
\end{equation}
We  combine \eqref{eq:111} and \eqref{eq:222} and obtain for all $i\ge 1$ that
\[
 \Prob\big(\exists A\!\in\!\CA_\mathrm{large}(i)\!:\! \partial_\mathrm{int}A\not\sim_{\CG_n} \cup_{H\in\mathfrak{H}_A(i)} \partial_\mathrm{int}H\big)
 \le C\big(1-p(1\wedge\beta)^{d\alpha}\big)^{\lfloor1\vee(\beta/\sqrt d)\rfloor c_{\ref{claim:type-holes}}i^{-2}2^{-i/d-3}n}.
\]
We apply this bound to all summands in \eqref{eq:large-holes-goal}, use that the terms of the upper bound are increasing in $i$, that  the last term is $i=\lceil \log_2 n\rceil$ (so $2^{-i/d}i^{-2}n\ge 2^{-1/d}(1+\log_2 n)^{-2}n^{(d-1)/d}$ for all $i$), and obtain for some $c=c(d)>0$ and $n$ sufficiently large
\begin{align*}
 \sum_{i=i_\star+1}^{\lceil\log_2 n\rceil}
 \Prob\big(\exists A\in\CA_\mathrm{large}(i) & : \partial_\mathrm{int}A\not\sim_{\CG_n} \cup_{H\in\mathfrak{H}_A(i)} \partial_\mathrm{int}H\big) \\
           & \le
 \lceil\log_2 n\rceil\big(1-p(1\wedge\beta)^{d\alpha}\big)^{\lfloor1\vee(\beta/\sqrt d)\rfloor c(\log n)^{-2}n^{(d-1)/d}}.
 \end{align*}
 The logarithmic prefactor is of smaller order as $n\to\infty$.
 This finishes the proof of Claim \ref{stat:large-holes} by adapting $c$ to some slightly smaller $c_{\ref{stat:large-holes}}=c_{\ref{stat:large-holes}}(d)$.\qedhere
\end{proof}

We are ready to give the final proofs of the section.
\begin{proof}[Proof of Lemma \ref{lemma:holes}]
We recall the bound \eqref{eq:event-2-splitted-1} -- \eqref{eq:event-2-splitted-2} on the probability of the event $\CE_2(\CG_n)=\{\exists A\in\CA_\mathrm{large}: A\not\sim_{\CG_n}(\cup_{H\in\mathfrak{H}_A} \partial_\mathrm{int}H)\}$, splitting it into two sums: one sum \eqref{eq:event-2-splitted-1} for small principal holes, and one sum \eqref{eq:event-2-splitted-2} for large principal holes. First assume that $\beta<2\sqrt{d}$. Then the error term from Claim~\ref{stat:small-holes} is $0$ regardless of the exact values of $p$ and $\beta$. Recall $f(p,\beta)=1-p(1\wedge\beta)^{d\alpha}$ from~\eqref{eq:fpbeta}, which we assumed to be sufficiently small in the statement of Lemma~\ref{lemma:holes}. To apply Claim~\ref{stat:large-holes}, we must assume $\big(1-p(1\wedge\beta)^{d\alpha}\big)^{\lfloor1\vee(\beta/\sqrt{d})\rfloor}$ to be sufficiently small, which follows if $f(p, \beta)$ is small. Thus, when $\beta<2\sqrt{d}$ we obtain by Claims~\ref{stat:small-holes} and~\ref{stat:large-holes},
\[
\Prob\big(\CE_2(\CG_n)\big)\le\big(1-p(1\wedge\beta)^{d\alpha}\big)^{\lfloor1\vee(\beta/\sqrt d)\rfloor \cdot c_{\ref{stat:large-holes}}(\log n)^{-2}n^{(d-1)/d}}.
\]
When $\beta\ge 2\sqrt{d}$, we obtain an extra error term from Claim~\ref{stat:small-holes}. Now, our assumption on $p$ and $\beta$ for $\beta \ge 2\sqrt{d}$ in Lemma~\ref{lemma:holes} is that $f(p,\beta)=(1-p)^{(\log_2\beta)^{-2}\beta^{(d-2)/(d-1)}}$ is sufficiently small. This criterion directly guarantees the criterion in Claim~\ref{stat:small-holes}, and also implies the necessary criterion that $(1-p)^{\lfloor \beta/\sqrt{d}\rfloor}$ is small when $\beta\ge2\sqrt{d}$ in Claim~\ref{stat:large-holes}. Thus, the bounds in Claims~\ref{stat:small-holes} and~\ref{stat:large-holes} hold, and
\[
\begin{aligned}
\Prob\big(\CE_2(\CG_n)\big)\le \big(2&\cdot(1-p)^{c_{\ref{stat:small-holes}}(1+\log_2 \beta)^{-2}\beta^{(d-2)/(d-1)}}\big)^n \\
&\hspace{15pt}+
\big(1-p\big)^{\lfloor1\vee(\beta/\sqrt d)\rfloor \cdot c_{\ref{stat:large-holes}}(\log n)^{-2}n^{(d-1)/d}}.
\end{aligned}
\]
The constant $c_{\ref{stat:small-holes}}$ depends only on $d$. Thus, by ensuring that $f(p,\beta)$ is sufficiently small, the first term can be made smaller than $2^{-n}$. For such $p$ and $\beta$, the second term dominates the right-hand side. This finishes the proof. 

\end{proof}

\begin{proof}[Proof of Proposition \ref{prop:second-largest}]
The proof is immediate from Claim \ref{claim:second-events} and Lemmas \ref{lemma:spanning-tree} and \ref{lemma:holes}. The condition $n(\log n)^{-2d/(d-1)}\ge k$ implies that $k^{(d-1)/d}\le n^{(d-1)/d}/(\log n)^2$, so that the error bound from \eqref{eq:unlikely-block-graphs} dominates the error bound from \eqref{eq:no-large-holes}. The coefficient of $-k^{(d-1)/d}$ in the exponent can be made at least $1$ by choosing either $\beta$ sufficiently large or $p(1\wedge\beta)^{d \alpha}$ sufficiently close to $1$.
\end{proof}
\section{Proof of Theorem \ref{thm:longrange}}
We will verify the statements in Theorem \ref{thm:longrange}, based on Proposition \ref{prop:second-largest}.

\subsection*{Upper bounds}
The upper bound on the second-largest component in \eqref{eq:main-second-largest} follows immediately from Proposition \ref{prop:second-largest}, by substituting $k=A(\log n)^{d/(d-1)}$ for some large constant $A=A(\delta)$. For the upper bound on the cluster-size decay in~\eqref{eq:main-cluster-decay} and the lower bounds in~(\ref{eq:main-second-largest}--\ref{eq:main-cluster-decay}), we cite two statements from our paper \cite{clusterI} which considers a more general class of percolation models.
The paper \cite{clusterI}  considers models where vertices have associated vertex marks, and the connection probability \eqref{eq:connection-prob-gen} contains an additional factor to $\beta$ in the numerator on the right-hand side of \eqref{eq:connection-prob-gen} that depends on these vertex-marks.  The model long-range percolation in Definition \ref{def:lrp} hence forms a subclass of the model in  \cite{clusterI} in which the vertex set is $\Z^d$ and all the vertex marks are identical to $1$. Due to this, conditions that regard vertices with high vertex-marks in \cite{clusterI} are automatically satisfied for the long-range percolation model in Definition \ref{def:lrp}.
We rephrase the results of \cite{clusterI} to the setting of long-range percolation of Definition \ref{def:lrp} by setting all vertex marks identical to $1$ in \cite{clusterI}.
\begin{proposition}[Prerequisites for the upper bound {\cite[Proposition 6.1]{clusterI}}]\label{prop:prerequisites-upper}
 Consider supercritical long-range percolation with parameters $\alpha>1$, and $d\in\N$.
 Assume that there exist $\zeta, c, c'>0$ and a function $g(k)=O(k^{1+c'})$ such that for all $n$, $k$ sufficiently large, whenever $n\ge g(k)$, it holds that
 \begin{align}
  \Prob\big(|\CC_{n}^\sss{(2)}| \ge k\big)
  &\le
  n^{c'}\exp\big(-c\, k^{\zeta}\big),\label{eq:cond-1}\\
  \Prob\big(|\CC_{n}^\sss{(1)}|\le n^{c}\big)                  & \le  n^{-1-c}.\label{eq:prop-outgiant}
 \end{align}
 Then there exists a constant $A>0$ such that for all $n, k$, sufficiently large such that $g(k)\le n\le\infty$,
 \begin{equation}\nonumber
  \Prob\big(|\CC_n(0)|\ge k, 0\notin\CC_n^\sss{(1)}\big)\le \exp\big(-(1/A)k^{\zeta}\big),
 \end{equation}
 and 
 \begin{equation}
  \frac{|\CC_n^\sss{(1)}|}{n}\overset\Prob\longrightarrow \Prob\big(|\CC(0)|=\infty\big),\qquad \mbox{as $n\to\infty$.}\nonumber
 \end{equation}
\end{proposition}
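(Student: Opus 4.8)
The plan is to establish the two conclusions of Proposition~\ref{prop:prerequisites-upper} separately.

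\emph{Cluster-size bound.} The starting point is elementary: on $\{0\notin\CC_n^\sss{(1)}\}$ the origin's component is not the largest, so $|\CC_n(0)|\le|\CC_n^\sss{(2)}|$, and hence by \eqref{eq:cond-1}, $\Prob(|\CC_n(0)|\ge k,\,0\notin\CC_n^\sss{(1)})\le n^{c'}\exp(-ck^\zeta)$ whenever $g(k)\le n<\infty$; this already proves the claim when $n\le\exp(ck^\zeta/(2c'))$. The real work is to remove the factor $n^{c'}$ for larger $n$ (and for $n=\infty$), which I would do by \emph{confining} the origin's component to a box of volume polynomial in $k$. Fix a large exponent $D=D(d,\alpha,c,c')$, set $M:=\lceil k^{D}\rceil$, and let $\Lambda_M(0)$ be the centred box of volume $M$ (we may assume $n\ge M$, since otherwise $n$ is at most polynomial in $k$ and the previous bound suffices). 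Since $\alpha>1$, a fixed vertex is incident to an edge of length $\ge\rho$ with probability $O(\rho^{d-d\alpha})$, which is polynomially small; a union bound over the at most $k^{O(1)}$ vertices one can reach then shows that, up to probability $\exp(-\Theta(\log k))$, the component $\CC_n(0)$ is contained in $\Lambda_M(0)$ as soon as $|\CC_n(0)|\le M^{c}$, and on this event $\CC_n(0)=\CC_M(0)$. Inside $\CG_M$ the component of the origin either equals $\CC_M^\sss{(1)}$, which by \eqref{eq:prop-outgiant} has probability at most $M^{-1-c}$, or it does not, which by \eqref{eq:cond-1} (valid as $M\ge g(k)$) has probability at most $M^{c'}\exp(-ck^\zeta)$; as $M$ is polynomial in $k$, both are $\exp(-\Theta(k^\zeta))$. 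The complementary events — that $|\CC_n(0)|>M^{c}$, or that $\CC_n(0)$ leaves $\Lambda_M(0)$ — I would control with Claim~\ref{claim:iso}: off the giant $|\CC_n(0)|\le n/2$, so $|\partial_\mathrm{ext}\CC_n(0)|\ge\delta|\CC_n(0)|^{(d-1)/d}$ and all these length-one edges are absent, costing $(1-\mathrm{p}(1))^{\delta M^{c(d-1)/d}}$ when $|\CC_n(0)|>M^{c}$; the ``escape'' term is bounded by the same edge-length tail (a component of $s$ vertices reaching Euclidean distance $R$ contains an edge of length $\ge R/s$), summed over dyadic $R$. Choosing $D$ large enough that $M^{c(d-1)/d}\gg k^\zeta$ and the escape sum is negligible, all contributions are $\le\exp(-(1/A)k^\zeta)$ after adjusting $A$, uniformly in $g(k)\le n\le\infty$.

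\emph{Law of large numbers.} Put $k_n:=(C\log n)^{1/\zeta}$ with $C=C(c,c')$ large. By \eqref{eq:cond-1}, $\Prob(|\CC_n^\sss{(2)}|\ge k_n)\le n^{c'-cC}\to0$, and by \eqref{eq:prop-outgiant}, $\Prob(|\CC_n^\sss{(1)}|\le n^{c})\to0$, so with high probability $\CG_n$ has one component of size $>n^{c}$ and all others of size $\le k_n$. Hence, with high probability, $\{x\in\CC_n^\sss{(1)}\}=\{|\CC_n(x)|>k_n\}$ for all $x\in\Lambda_n$, so $|\CC_n^\sss{(1)}|=\#\{x\in\Lambda_n:|\CC_n(x)|>k_n\}+o_\Prob(n)$. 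For the first moment, the cluster-size bound just proved, together with the spatial locality it furnishes (the edge-length union bound confines $\CC_n(x)$ to a polynomial box, so $\{|\CC_n(x)|>k_n\}$ essentially only depends on a bounded neighbourhood of $x$), gives $\Prob(|\CC_n(x)|>k_n)=\Prob(|\CC(0)|>k_n)+o(1)$ uniformly over $x$ away from $\widetilde\partial_\mathrm{int}\Lambda_n$, and $\Prob(|\CC(0)|>k_n)\downarrow\theta:=\Prob(|\CC(0)|=\infty)$; the $O(n^{(d-1)/d})$ boundary vertices are negligible, so $\E[\#\{x:|\CC_n(x)|>k_n\}]=n\theta(1+o(1))$. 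For concentration, I would bound the variance of $Y_n:=\#\{x\in\Lambda_n:|\CC_n(x)|\le k_n\}$: up to an event of probability $\Prob(|\CC_n^\sss{(2)}|\ge k_n)+\exp(-\Theta(k_n^\zeta))=o(1)$, each $\ind{|\CC_n(x)|\le k_n}$ is measurable with respect to the edges meeting the polynomial-size box around $x$, so for $\|x-y\|$ exceeding that scale the indicators are asymptotically uncorrelated and the covariance sum is $o(n^2)$; Chebyshev then yields $Y_n/n\to1-\theta$ in probability, equivalently $|\CC_n^\sss{(1)}|/n\to\theta$.

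\emph{Main obstacle.} The crux is the confinement step: showing that a finite (non-giant) component of the origin of size $\ge k$ is, with overwhelming probability, contained in a box of volume polynomial in $k$ and has polynomially many vertices. This is what removes the $n^{c'}$ prefactor of \eqref{eq:cond-1} and makes the bound uniform for all $n\le\infty$; it relies on $\alpha>1$ (long edges are only polynomially likely) and on the isoperimetric inequality of Claim~\ref{claim:iso} (a large isolated component has a large, necessarily absent, edge-boundary). The spatial locality produced here is also exactly what the second-moment step of the law of large numbers needs.
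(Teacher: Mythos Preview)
The paper does not prove this proposition here; it is quoted from the companion paper \cite{clusterI}, so there is no in-paper argument to compare against. Your localization idea---restrict to a box $\Lambda_M(0)$ with $M=\lceil k^D\rceil$ and apply \eqref{eq:cond-1} there---is natural, but as written the cluster-size half has a real gap: several of your error terms are only \emph{polynomially} small in $k$, while the target is $\exp(-k^\zeta/A)$.

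Concretely: (i) the case $\{\CC_M(0)=\CC_M^{\sss{(1)}},\ |\CC_M^{\sss{(1)}}|\le M^c\}$ is bounded via \eqref{eq:prop-outgiant} by $M^{-1-c}=k^{-D(1+c)}$, polynomial for every fixed $D$; (ii) your ``escape'' bound---a component of $s\le M^c$ vertices reaching Euclidean distance $\Theta(M^{1/d})$ must contain an edge of length $\gtrsim M^{1/d-c}$---yields at best a power of $M$, since long edges in LRP are only polynomially unlikely (and it also needs $c<1/d$, which is not assumed); (iii) your appeal to Claim~\ref{claim:iso} for $\{|\CC_n(0)|>M^c,\ 0\notin\CC_n^{\sss{(1)}}\}$ is not a probability estimate. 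Isoperimetry says that \emph{on} this event many nearest-neighbor boundary edges are closed, but converting that into $\Prob(\cdot)\le(1-\mathrm p(1))^{\delta M^{c(d-1)/d}}$ requires a union bound over the exponentially many possible component shapes---exactly the Peierls-type enumeration of Lemmas~\ref{lemma:spanning-tree}--\ref{lemma:holes} that is already packaged into hypothesis \eqref{eq:cond-1}. Invoking \eqref{eq:cond-1} at threshold $M^c$ instead reintroduces the factor $n^{c'}$ you set out to remove, making the argument circular. A single polynomial-size box therefore does not suffice to kill the volume prefactor; the argument in \cite{clusterI} needs a more careful scheme. Your law-of-large-numbers outline is standard and essentially fine once the cluster-size bound is in hand.
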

We have just proved the prerequisite \eqref{eq:cond-1} for our case in Proposition \ref{prop:second-largest} with $\zeta=(d-1)/d$, $c=1$ and $c'=2$. The other prerequisite  \eqref{eq:prop-outgiant} is a consequence of the following lemma (\eqref{eq:giant-ldp} below in particular). The second statement of the lemma, \eqref{eq:origin-giant} will be needed for the lower bound of Theorem \ref{thm:longrange} shortly. 
\begin{lemma}\label{lem:Coupling}
 Consider long-range percolation in Definition \ref{def:lrp} with $\alpha>1$, and $d\ge 2$. For all $p\in(0,1)$, there exists $\beta_{\ref{lem:Coupling}}=\beta_{\ref{lem:Coupling}}(p,d,\alpha)>0$ such that for all $\beta\ge \beta_{\ref{lem:Coupling}}$ there exists $\rho>0$ such that for $n$ sufficiently large,
 \begin{align}
  \Prob\big(|\CC_n^\sss{(1)}|\ge \rho n\big)&\ge 1-\exp\big(-\rho n^{(d-1)/d}\big),\label{eq:giant-ldp}\\
\Prob\big(|\CC_n(0)|\ge \rho n\big)&\ge \rho.\label{eq:origin-giant}
 \end{align}
  The same bounds hold whenever $p(1\wedge\beta)^{d\alpha}$ is sufficiently close to 1.
\end{lemma}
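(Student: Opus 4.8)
My plan is to obtain both bounds by a coarse-graining (renormalization) argument that couples the ``short-range'' part of long-range percolation with supercritical site percolation on a rescaled lattice. Fix $p\in(0,1)$. Partition $\Lambda_n$ into cubes of side-length $L$ (with $L=L(p,d,\alpha)$ a large constant to be chosen), indexed by a rescaled box $\Lambda_{n/L^d}'\subseteq\Z^d$. Call a cube \emph{good} if the induced subgraph of $\CG_n$ on that cube is connected and spans the whole cube, and additionally each good cube is connected by an edge of $\CG_n$ to each of its $2d$ good lattice-neighbours. By the independence of edges in Definition~\ref{def:lrp} and the fact that every edge of Euclidean length at most $\beta$ is present with probability $p$, once $\beta$ is large enough compared to $L\sqrt d$, the event that a fixed cube is good has probability at least $1-\phi(L,p)$ where $\phi(L,p)\to 0$ as $L\to\infty$ (uniformly over $\beta\ge L\sqrt d$); and this event depends only on edges within distance $\sqrt d\,L$ of the cube, so goodness of cubes that are sufficiently far apart is independent, giving a $k$-dependent percolation that stochastically dominates Bernoulli site percolation with parameter arbitrarily close to $1$ on $\Z^d$ (via the Liggett--Schonmann--Stacey theorem). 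For the alternative regime, the same argument works verbatim with $p(1\wedge\beta)^{d\alpha}$ playing the role of $p$: if $p(1\wedge\beta)^{d\alpha}$ is close to $1$ then every nearest-neighbour edge is present with probability close to $1$, so a fixed cube (and its connections to neighbours) is good with probability close to $1$.

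The first step is then standard supercritical-site-percolation input: choosing the site-parameter so close to $1$ that we are deep in the supercritical phase, with probability at least $1-\exp(-c\, (n/L^d)^{(d-1)/d})$ the good cubes in the rescaled box contain a connected cluster occupying a positive fraction $\rho'$ of all cubes (this is the classical statement that the largest cluster in a box fills a linear fraction of the box with probability $1-e^{-\Theta(\text{surface})}$, cf.\ the results cited for nearest-neighbour percolation in the introduction — I would cite Grimmett or Deuschel--Pisztora for the explicit stretched-exponential bound on the complement). Every good cube in this cluster lies in one connected component of $\CG_n$, since good neighbouring cubes are joined by an edge and each good cube is internally spanning; so this component has at least $\rho' (n/L^d)\cdot 1$ vertices — in fact at least $\rho' (n/L^d)$ cubes, hence at least $\rho' n/L^d$ vertices, and after relabelling $\rho:=\rho'/L^d$ we get $|\CC_n^\sss{(1)}|\ge\rho n$ with probability at least $1-\exp(-\rho n^{(d-1)/d})$ (shrinking $\rho$ once more to absorb constants). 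This is exactly \eqref{eq:giant-ldp}.

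For \eqref{eq:origin-giant}, I would argue that with probability bounded below by a constant, the cube containing the origin is good \emph{and} belongs to the giant good cluster. The first event has probability close to $1$; for the second, note that the origin's cube being good is a local event, and conditionally on a cube being good, its probability of lying in the infinite good cluster of site percolation on $\Z^d$ is at least the percolation probability $\theta_{\mathrm{site}}>0$, which is itself close to $1$ when the site-parameter is close to $1$ — a short argument comparing the finite-box cluster to the infinite one (or a direct second-moment/uniqueness argument) then transfers this to $\CG_n$ and gives $\Prob(|\CC_n(0)|\ge\rho n)\ge\rho$ after possibly decreasing $\rho$. The main obstacle, and the only place requiring care, is making the coupling precise: verifying that the goodness events have the claimed finite range of dependence and the claimed high marginal probability uniformly in $\beta$ (once $\beta\ge\beta_{\ref{lem:Coupling}}$), so that the Liggett--Schonmann--Stacey domination applies with a site-parameter that can be pushed past $p_c(\Z^d)$ — everything downstream is then a black-box invocation of supercritical site-percolation estimates. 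I would also remark that $d\ge 2$ is used precisely here (site percolation on $\Z$ is never supercritical), consistent with the hypothesis of the lemma.
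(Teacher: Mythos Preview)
Your renormalization for \eqref{eq:giant-ldp} is essentially the paper's argument: both partition $\Lambda_n$ into blocks small enough that all intra-block and neighbouring-block edges have probability $p$, declare a block ``good'' when the induced graph is connected, observe that the resulting block process is finite-range dependent with marginals close to $1$, apply Liggett--Schonmann--Stacey to dominate i.i.d.\ nearest-neighbour percolation with parameter close to $1$, and cite Deuschel--Pisztora for the surface-order large deviation. The only cosmetic differences are that the paper takes the block side-length to scale with $\beta$ (roughly $\beta/(2\sqrt d)$) rather than fixing $L$ and then requiring $\beta\ge L\sqrt d$, and the paper phrases the renormalized process as site-bond rather than pure site percolation; neither changes anything. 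For the alternative regime the paper is slightly more direct---it simply discards all but nearest-neighbour edges and applies Deuschel--Pisztora immediately---but your version also works.

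For \eqref{eq:origin-giant} the paper takes a different and shorter route that avoids the step you flag as delicate. Instead of arguing that the origin's block lies in the renormalized giant, it sets $Z_\ell=\sum_{x\in\Lambda_{2^{-d}n}}\ind{|\CC_{2^{-d}n}(x)|\ge\ell}$, notes $\{|\CC_{2^{-d}n}^{\sss{(1)}}|\ge\ell\}\subseteq\{Z_\ell\ge\ell\}$, and combines Markov's inequality with \eqref{eq:giant-ldp} to find some (possibly $n$-dependent) $x\in\Lambda_{2^{-d}n}$ with $\Prob(|\CC_{2^{-d}n}(x)|\ge\rho 2^{-d}n)\ge\rho/2$; translating by $-x$ puts the origin in a box $\Lambda_{2^{-d}n}(-x)\subseteq\Lambda_n$ with the same bound. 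This uses nothing beyond \eqref{eq:giant-ldp} itself. Your approach is also valid, but the passage from ``origin's block is in the infinite good cluster'' to ``origin is in the finite-box giant of $\CG_n$'' does require an extra input (uniqueness of the large cluster in a box, or a second-moment argument), which the paper's trick neatly sidesteps.
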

\begin{proof}
 We start with showing \eqref{eq:giant-ldp}.
 If $p(1\wedge\beta)^{d\alpha}$ is sufficiently close to $1$, then we can do the following.  Taking $G_n$ as a realization of LRP in $\Lambda_n$, and retaining only the edges of $G_n$ that are between nearest-neighbor vertices in $\Z^d_1$, we obtain the classical iid nearest-neighbor Bernoulli percolation in $\Lambda_n$. Denote this graph by $G_n^{\sss{(\mathrm{nn})}}$ and its largest component by $\CC_n^{\sss{(1)}}(G_n^{\sss{(\mathrm{nn})}})$. 
 Then since $\CE(G_n^{\sss{(\mathrm{nn})}})\subseteq \CE(G_n)$, for the sizes of the largest components it holds that $|\CC_n^{\sss{(1)}}|\ge |\CC_n^{\sss{(1)}}(G_n^{\sss{(\mathrm{nn})}})|$. 
Since $p(1\wedge\beta)^{d\alpha}$ is sufficiently close to $1$,  the surface-order large devation result of \cite[Theorem 1.1]{deuschel1996surface} applies to $\CC_n^{\sss{(1)}}(G_n^{\sss{(nn)}})$, and \eqref{eq:giant-ldp} immediately follows. 

Assume now that $p(1\wedge\beta)^{d\alpha}$ is not sufficiently close to $1$ for the nearest-neighbor subgraph to ensure the required result. 
Let 
\begin{equation}\label{eq:box-number}
m(n):=\left\lceil \frac{n^{1/d}}{\beta/(2\sqrt{d})}\right\rceil^d.
\end{equation}
 Partition $\Lambda_n$ into $m(n)$ identical boxes $Q_1, Q_2, Q_{m(n)}$, each having sidelength  $r(n)=n^{1/d}/m(n)^{1/d}$ $\le \beta/(2\sqrt{d})$. 
 Denote $n_i:=|\Z^d\cap Q_i|$ the number of vertices in box $Q_i$.
  Then, for $n$ large enough, for all $i\le m(n)$ it holds that  $n_i\in [C(\beta), 4C(\beta)]$, where for all $n$ sufficiently large
 \begin{equation}\label{eq:C-beta}
 C(\beta):=\mathrm{Vol}(Q_1)/2= r(n)^d/2\in \left[\frac{\beta^d}{4(2\sqrt{d})^d},\frac{\beta^d}{2(2\sqrt{d})^d}\right].\end{equation} 
 Let us say that $Q_i, Q_j$ are adjacent boxes if they share a $(d-1)$-dimensional face. 
 Since the diameter of each box is at most $\beta/2$, by \eqref{eq:connection-prob-gen}, if $Q_i, Q_j$ are adjacent boxes, 
\begin{equation}\label{eq:across-boxes}
    \Prob\left(\{x,y\}\in \CE(\CG_n)\right)=p \qquad \mbox{for all }  x\in Q_i\cap \Z^d, y\in Q_j\cap \Z^d,
\end{equation}
 independently of other edges. The same is true when both $x,y\in Q_i\cap \Z^d$.
 
 Let us denote the subgraph of $\CG_n$ induced by the vertices in the box $Q_i$ by $\CG_n(Q_i)$.  $\CG_n(Q_i)$ stochastically dominates an Erd\H{o}s--R\'enyi random graph with $n_i\in [C(\beta),4C(\beta)]$ vertices and edge probability $p$. The probability that the graph diameter of $\CG_n(Q_i)$ is at most 2, is by a union bound at most $\binom{n_i}2(1-p)^{n_i-2}\to0$ as $n_i\to\infty$. If the graph diameter is at most $2$, then $\CG_n(Q_i)$ must be connected. Thus, for any fixed $\varepsilon>0$,  by choosing $\beta$ (and hence also $C(\beta)$) large enough depending on $\varepsilon$, the probability that $\CG_n(Q_i)$ is connected is at least $1-\varepsilon$. Further, the graphs $(\CG_n(Q_i))_{i\le m(n)}$ are \emph{independent} since they are induced subgraphs of long-range percolation on vertices in disjoint boxes, and edges are present independently in $\CG_n$ by Definition \ref{def:lrp}.
 
 We define a deterministic auxiliary graph $G$. Every box $Q_i$ corresponds to a vertex $v_i$, for each $i\le m(n)$, and two vertices $v_i,v_j$ in $G$ are adjacent if the corresponding boxes $Q_i, Q_j$ are adjacent, i.e., they share a $(d-1)$-dimensional face. (Similarly, one can define the $1$-distance between any two vertices $v_i, v_j$ by the length of the shortest path between $v_i, v_j$ via adjacent vertices.)
Hence, the vertices of $G$ then form a box $\widetilde \Lambda_{m(n)}$ of volume $m(n)$ of $\Z^d_1$. This we call the re-normalized lattice.

 Now we define a random subgraph $H$ of $G$.
 We declare a vertex $v_i$ of $G$
 \emph{active} when $\CG_n(Q_i)$ is connected. Edges of $H$ will be only present between active and adjacent vertices in $G$. Assuming that two vertices $v_i$, $v_j \in H$ are adjacent in $G$ and both active, we declare the edge between $v_1$ and $v_2$ \emph{open}, equivalently, present in $\CE(H)$, if there exist vertices $x \in Q_i\cap \Z^d, y \in Q_j\cap \Z^d$ with the edge $\{u, v\}\in \CE(\CG_n)$.   Conditional on $v_i, v_j$ being active, by \eqref{eq:across-boxes}, the nearest-neighbor edge between $v_1$ and $v_2$ is open with probability  $1-(1-p)^{n_i n_j}\ge 1-(1-p)^{C(\beta)^2}\ge 1-\varepsilon$, where the last inequality holds for arbitrarily small $\varepsilon > 0$ by making $C=C(\beta)$ in \eqref{eq:C-beta} large enough. Different edges of $\CE(G)$ are present conditionally independently in $\CE(H)$ given that the end-vertices are active.
 
Finally, let $H$ be the induced graph obtained from $H$ on active vertices and open edges $\CE(H)$. By the observation above, the vertices $(v_i)_{i\le m(n)}$ form a box $\widetilde \Lambda_{m(n)}$ of volume  $m(n)$ of $\Z^d_1$. Then $H$ stochastically dominates a site-bond percolation of $\Z_d^1$ in $\widetilde \Lambda_{m(n)}$.
 
 More precisely, since vertices of $G$ are active independently with probability at least $1-\varepsilon$, and edges of $G$ between adjacent vertices are present conditionally independently again with probability at least $1-\varepsilon$ in $H$,  each edge in the renormalised lattice $\widetilde \Lambda_{m(n)}$ is open with probability at least $(1-\varepsilon)^3$.  The model is $1$-dependent,  since the state of any edge $\{v_i, v_j\}$ of $H$ depends only on edges sharing at least one vertex with $\{v_i, v_j\}$. 
 
 Since $\varepsilon$ can be chosen arbitrarily small, by \cite[Theorem 0.0]{LSS97} or \cite[Remark 6.2]{lyons2011indistinguishability}, the graph $H$ therefore stochastically dominates iid nearest-neighbor bond percolation $G^\star$ on $\widetilde \Lambda_{m(n)}$ with parameter $p^\star$ that can also be made arbitrarily close to $1$. Hence for the sizes of the largest connected components, there is a coupling such that 
 $|\CC_{m(n)}^{\sss{(1)}}(H)|\ge |\CC_{m(n)}^{\sss{(1)}}(G^\star)|$ holds.
 Thus,  \cite[Theorem 1.1]{deuschel1996surface} applies to $|\CC_{m(n)}^{\sss{(1)}}(G^\star)|$, 
 and so for some $c(\beta)>0$ we obtain that using \eqref{eq:box-number}
 \begin{equation}\label{eq:ldp-renorm}
\Prob\left( |\CC_{m(n)}^{\sss{(1)}}(H)|\ge \rho m(n)\right) \ge \re^{-c\,m(n)^{(d-1)/d}}\ge 1-\re^{-c(\beta)n^{(d-1)/d}}. \end{equation}
    An active $v_i\in G$ corresponds to a box $Q_i$ that an  contains at least $C(\beta)$ vertices and the graphs $\CG(Q_i)$ are connected, so it holds deterministically that
    $|\CC_n^{\sss{(1)}}(\CG_n)|\ge C(\beta) |\CC_{m(n)}^{\sss{(1)}}(H)|$. This, combined with 
    \eqref{eq:ldp-renorm} implies \eqref{eq:giant-ldp}.

 We turn now to prove \eqref{eq:origin-giant}.  Consider a smaller box $\Lambda_{2^{-d}n}$.
 Define then 
 \[Z_{\ell} :=\sum_{x\in\Lambda_{2^{-d}n}} \ind{|\CC_{2^{-d}n}(x)|\ge \ell}.\]
 We argue that $\{\CC_n^{\sss{(1)}}\ge \ell\}\subseteq \{Z_\ell \ge \ell\}$. Indeed, if the largest component is at least of size $\ell$ then in $Z_\ell$ at least $\ell$ many indicators are $1$.
 Then, using \eqref{eq:giant-ldp} with $\rho2^{-d}$ for a lower bound, and  applying a Markov's inequality with $\ell=\rho 2^{-d}n$ followed by a union bound yields that 
 \[ 
 \begin{aligned}
 1-\exp(-\rho 2^{-d} n^{(d-1)/d})\le \Prob( \CC_n^{\sss{(1)}}\ge\rho 2^{-d}n)&\le \Prob(Z_{\rho 2^{-d}n} \ge \rho 2^{-d}n )\\
& \le \frac{\E[Z_{\rho 2^{-d}n}]}{\rho 2^{-d}n}\\& \le \frac{1}{\rho 2^{-d}n}\sum_{x\in\Lambda_{2^{-d}n}} \Prob\big(|\CC_{2^{-d}n}(x)|\ge \rho 2^{-d}n\big). 
 \end{aligned}
 \]
 If for all $x\in\Lambda_{2^{-d}n}$ it would hold that $\Prob\big(|\CC_{2^{-d}n}(x)|\ge \rho 2^{-d}n\big)\le \rho/2$, then the right-hand side would be at most $1/2$, while the  left-hand side tends to $1$. 
 
 Hence, there must exist $x\in\Lambda_{2^{-d}n}$ such that $\Prob\big(|\CC_{2^{-d}n}(x)|\ge \rho 2^{-d}n\big)\ge \rho/2$. 
 Let $x\in\Lambda_{2^{-d}n}$ be such a vertex. 
Then, by the translation invariance of the infinite model $\CG_\infty$, looking at the component of the origin $\CC_{2^{-d}n}^{\sss{(-x)}}(0)$ inside the box $\Lambda_{2^{-d}n}(-x)$, it holds that 
 \[ \Prob\big(|\CC_{2^{-d}n}^{\sss{(-x)}}(0)|\ge \rho 2^{-d}n\big)=\Prob\big(|\CC_{2^{-d}n}(x)|\ge \rho 2^{-d}n\big).\]
However, the shifted box $\Lambda_{2^{-d}n}(-x)\subseteq\Lambda_n$ for any $x\in\Lambda_{2^{-d}n}$, and hence $\CC_{2^{-d}n}^{\sss{(-x)}}(0)\subseteq \CC_{n}(0)$. Hence,
we obtain $$\Prob\big(|\CC_n(0)|\ge \rho 2^{-d}n\big)\ge \Prob\big(|\CC_{2^{-d}n}(x)|\ge \rho 2^{-d}n\big)\ge\rho/2.$$ Hence, \eqref{eq:origin-giant} follows by adapting the constant $\rho$.
\end{proof}
Since both prerequisites of Proposition \ref{prop:prerequisites-upper} are satisfied, this finishes the proof of the upper bounds of Theorem \ref{thm:longrange}.

\subsection*{Lower bounds}

For the lower bound we adapt the lower bound from \cite{clusterI}, rephrased to the model of long-range percolation of Definition \ref{def:lrp}, by setting the vertex set to $\Z^d$ and all vertex marks to $1$ in \cite{clusterI}. The lower bound of cluster-size decay and second-largest component that we are about the cite --  \cite[Proposition 7.1]{clusterI} --
requires that  in a box of volume $\ell$ a linear sized (at least $\rho \ell$) giant component on vertices with marks in the interval $[1, \mathrm{polylog}(\ell)]$ exists, with probability at least $\rho>0$. Since in long-range percolation all vertex marks are identical to $1$, this requirement of \cite[Proposition 7.1]{clusterI} turns into the requirement \eqref{eq:condition-for-lower} below for LRP. Moreover, the number $\mathfrak{m}_\CZ$ from \cite[Proposition 7.1]{clusterI} equals $1$ when we restrict to long-range percolation with $\alpha>1+1/d$: in the setting without vertex marks, $\mathfrak{m}_\CZ$ counts the number of maximizers in the set $\{2-\alpha, (d-1)/d\}$.
\begin{proposition}[Lower bound {\cite[Proposition 7.1]{clusterI}}]\label{prop:lower}
 Consider supercritical long-range percolation with parameters $\alpha>1+1/d$,  $d\ge 2$, and assume that $p\wedge \beta\in(0, 1)$.
 Assume that there exists a constant $\rho>0$ such that for all $n$ sufficiently large,
 \begin{equation}\label{eq:condition-for-lower}
 \Prob\big(|\CC_n(0)|\ge \rho n\big)
  \ge
  \rho.
  \end{equation}
 Then there exists $A>0$ such that for all $n\in [Ak, \infty]$,
 \begin{equation}\label{eq:cluster-lower}
  \Prob\big(|\CC_n(0)|\ge k, 0\notin\CC_n^\sss{(1)}\big) \ge
  \exp\big(-Ak^{(d-1)/d}\big).
 \end{equation}
Moreover, there exists $ \delta, \varepsilon>0$, such that for all (finite) $n$ sufficiently large 
 \begin{equation}\label{eq:cn2-lower}
  \Prob\big(|\CC_{n}^\sss{(2)}|\le \varepsilon(\log n)^{d/(d-1)}\big)\le n^{-\delta}.
 \end{equation}
\end{proposition}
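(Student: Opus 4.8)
The plan is to prove both \eqref{eq:cluster-lower} and \eqref{eq:cn2-lower} from one building block: forcing a single box to be \emph{totally isolated} from the rest of $\Z^d$ while internally carrying a large component. For $v\in\N$ write $Q:=\Lambda_v$, let $\mathcal I(Q)$ be the event that every edge with exactly one endpoint in $Q$ is absent, and let $\mathcal L_k(Q)$ be the event that the component of the center of $Q$ in $\CG_\infty[Q]$ has size at least $k$. These two events concern disjoint edge sets and are therefore independent. The hypothesis $\alpha>1+1/d$ is exactly the integrability condition \eqref{eq:int-cond}, and a cone-counting argument as in Claim~\ref{claim:cross} shows that the expected number of edges leaving $Q$ is $\Theta(v^{(d-1)/d})$: the box has $\Theta(v^{(d-1)/d})$ surface vertices, and a vertex at depth $t$ from $\partial Q$ has expected outgoing degree $O(t^{-d(\alpha-1)})$, which is summable in $t$ precisely when $\alpha>1+1/d$. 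Since $p(1\wedge\beta)^{d\alpha}<1$ under the assumption $p\wedge\beta<1$, every edge is absent with probability bounded away from $0$, so $\Prob(\mathcal I(Q))\ge\exp(-C_1v^{(d-1)/d})$ for a constant $C_1=C_1(p,\beta,\alpha,d)$. Combining this with \eqref{eq:condition-for-lower} applied to $\CG_v$ (via translation invariance of $\CG_\infty$) and with $k\le\rho v$ gives $\Prob(\mathcal I(Q)\cap\mathcal L_k(Q))\ge\rho\exp(-C_1v^{(d-1)/d})$.

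For \eqref{eq:cluster-lower} I would set $v:=\lceil k/\rho\rceil$ and take $Q=\Lambda_v$ centered at the origin, so $Q\subseteq\Lambda_n$ whenever $n\ge Ak$. On $\mathcal I(Q)\cap\mathcal L_k(Q)$ the origin's component in $\CG_n$ lies inside $Q$ and has size in $[k,v]$, hence is finite; for $n=\infty$ this already yields $0\notin\CC_\infty^\sss{(1)}$. For finite $n$ one additionally needs a component of $\CG_n$ disjoint from $Q$ and larger than $v$: tile $\Lambda_n\setminus Q$ into $\lceil\sqrt n\rceil$ disjoint sub-boxes, apply \eqref{eq:condition-for-lower} to each (independent events of probability $\ge\rho$, each producing an internal component of size $\rho\cdot(\text{sub-box volume})\gg v$), so this event fails with probability at most $(1-\rho)^{\lceil\sqrt n\rceil}$. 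Since $\mathcal I(Q)$, $\mathcal L_k(Q)$ and the sub-box events involve pairwise disjoint edge sets, multiplying probabilities yields $\Prob(|\CC_n(0)|\ge k,\,0\notin\CC_n^\sss{(1)})\ge\tfrac12\rho\exp(-C_1(k/\rho)^{(d-1)/d})\ge\exp(-Ak^{(d-1)/d})$ for $A$ large, uniformly over $n\in[Ak,\infty]$.

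For \eqref{eq:cn2-lower} I would fix a small constant $C_0$, set $v:=\lceil C_0(\log n)^{d/(d-1)}\rceil$ and $k:=\lfloor\tfrac12\rho C_0(\log n)^{d/(d-1)}\rfloor$, tile $\Lambda_n$ into boxes of volume $v$ and keep a maximal subfamily $Q_1,\dots,Q_M$ pairwise at distance $\ge v^{1/d}$ (so $M=\Theta(n/v)=n^{1-o(1)}$), and put $E_i:=\mathcal I(Q_i)\cap\mathcal L_k(Q_i)$ and $N:=\sum_{i\le M}\mathbbm{1}_{E_i}$. By the first paragraph $\Prob(E_i)\ge\rho\exp(-C_1v^{(d-1)/d})\ge c_2n^{-c_1}$ with $c_1:=C_1C_0^{(d-1)/d}$; choosing $C_0$ so that $c_1<1$ makes $\E N\ge c_2Mn^{-c_1}=n^{1-c_1-o(1)}$ polynomially large. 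The correlations satisfy the identity $\Prob(E_i\cap E_j)=\Prob(E_i)\Prob(E_j)W_{ij}$ with $W_{ij}:=\prod_{e\in E(Q_i,Q_j)}(1-p(e))^{-1}\ge1$, where $E(Q_i,Q_j)$ is the set of potential edges between the two boxes: $E_i$ and $E_j$ share exactly the requirement that these (few) edges be absent, and otherwise depend on disjoint edges. Since the boxes are $v^{1/d}$-separated, $p(e)\le p\beta^{d\alpha}v^{-\alpha}$ on these edges, so $\log W_{ij}\le2\,\E|E(Q_i,Q_j)|$, and $\sum_{j\ne i}\log W_{ij}\le2\,\E[\text{number of edges leaving }Q_i]=O(v^{(d-1)/d})=O(\log n)$; a dyadic decomposition over the distance between boxes, together with $\alpha>1+1/d$, refines this to $\sum_{j\ne i}(W_{ij}-1)\le n^{o(1)}$, the relevant gain $v^{2-\alpha}=(\log n)^{d(2-\alpha)/(d-1)}$ having exponent $<1$ exactly because $\alpha>1+1/d$. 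Hence $\mathrm{Var}(N)\le\E N+\sum_{i\ne j}\Prob(E_i)\Prob(E_j)(W_{ij}-1)\le\E N+(\max_l\Prob(E_l))\,\E N\,n^{o(1)}$, and Chebyshev's inequality gives $\Prob(N=0)\le\mathrm{Var}(N)/(\E N)^2\le 1/\E N+(\max_l\Prob(E_l))\,n^{o(1)}/\E N\le n^{-\delta}$, using $\max_l\Prob(E_l)\le1$ and $\E N\ge n^{1-c_1-o(1)}$ with $c_1<1$. Finally, on $\{N\ge1\}$ some $Q_i$ carries a genuine component of $\CG_n$ of size in $[k,v]$; intersecting with $\{|\CC_n^\sss{(1)}|>v\}$ — which holds with probability $1-e^{-c\sqrt n}$ by the sub-box argument of the previous paragraph, and on which, thanks to $\mathcal I(Q_i)$, the giant lies outside $Q_i$ and is strictly larger — forces $|\CC_n^\sss{(2)}|\ge k$; a union bound over the two exceptional events gives \eqref{eq:cn2-lower} (with $\varepsilon$ a suitable fraction of $\rho C_0$).

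The main obstacle is the second-moment step for \eqref{eq:cn2-lower}: the isolation events are genuinely positively correlated through the edges joining nearby boxes, and one must control $\sum_{j\ne i}(W_{ij}-1)$ uniformly in $i$ and show it is sub-polynomial in $n$. This is exactly where the quantitative strength of $\alpha>1+1/d$ is used: it bounds the expected number of edges leaving a box of volume $v$ by $\Theta(v^{(d-1)/d})=\Theta(\log n)$, and it keeps the correlation $W_{ij}$ of even the closest box pairs at $\exp((\log n)^{d(2-\alpha)/(d-1)})=n^{o(1)}$, the exponent $d(2-\alpha)/(d-1)$ being $<1$ iff $\alpha>1+1/d$. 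A secondary point to check with care is the factorization of $\Prob(\mathcal I(Q)\cap\mathcal L_k(Q))$: this is immediate in the model of Definition~\ref{def:lrp} because all vertices play the same role, so there are no atypical vertices producing long edges that $\mathcal I(Q)$ would fail to kill.
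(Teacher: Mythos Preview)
The paper does not prove this proposition at all: it is imported verbatim from the companion paper \cite[Proposition~7.1]{clusterI} and used as a black box, so there is no in-paper argument to compare your proposal against. Your write-up is therefore a genuine from-scratch attempt, and the isolated-box plus second-moment strategy you outline is the standard route and is essentially correct. Two remarks are in order.

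\medskip
\noindent\textbf{A real (but easily repaired) gap in \eqref{eq:cluster-lower}.} Your sub-box step for finite $n$ does not work as written in the regime $n=\Theta(k)$. You tile $\Lambda_n\setminus Q$ into $\lceil\sqrt n\rceil$ sub-boxes of volume $\sim\sqrt n$ and claim each produces a component of size $\rho\sqrt n\gg v$. But $v=\lceil k/\rho\rceil$, and the statement only guarantees $n\ge Ak$, so for $k$ comparable to $n$ one has $\rho\sqrt n=\Theta(\sqrt k)$ while $v=\Theta(k)$: the inequality $\rho\sqrt n>v$ fails. The fix is trivial: use a \emph{constant} number of sub-boxes (or even a single one) of volume $n/C$; choosing $A>C/\rho^2$ ensures $\rho\cdot n/C>v$, and the resulting constant factor $\rho$ (rather than $1-(1-\rho)^{\sqrt n}$) is harmlessly absorbed into the final constant $A$ in the exponent. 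Everything else in your argument for \eqref{eq:cluster-lower}---the independence of $\mathcal I(Q)$, $\mathcal L_k(Q)$, and the sub-box events, and the surface-order bound $\Prob(\mathcal I(Q))\ge\exp(-C_1v^{(d-1)/d})$ coming from Claim~\ref{claim:cross} and the integrability condition $\alpha>1+1/d$---is correct.

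\medskip
\noindent\textbf{The second-moment step for \eqref{eq:cn2-lower} is sound.} Your factorisation $\Prob(E_i\cap E_j)=\Prob(E_i)\Prob(E_j)W_{ij}$ with $W_{ij}=\prod_{e\in E(Q_i,Q_j)}(1-p(e))^{-1}$ is exactly right, since the only shared constraint between $E_i$ and $E_j$ is the absence of the cross-box edges. The delicate point is that for neighbouring boxes one only gets $\log W_{ij}\lesssim v^{2-\alpha}=(\log n)^{d(2-\alpha)/(d-1)}$, which for $\alpha<2$ is unbounded; your observation that this exponent is strictly below $1$ precisely when $\alpha>1+1/d$ is what makes $W_{ij}=n^{o(1)}$ and, after the dyadic sum over distances, $\sum_{j\ne i}(W_{ij}-1)=n^{o(1)}$. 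Combined with $\E N\ge n^{1-c_1-o(1)}$ and $c_1<1$ (your free choice of $C_0$), Chebyshev indeed gives $\Prob(N=0)\le n^{-(1-c_1)+o(1)}\le n^{-\delta}$. The closing union bound with $\{|\CC_n^{(1)}|>v\}$ is fine here because now $v$ is only polylogarithmic, so your $\sqrt n$-tiling does give the required large component with probability $1-\exp(-c\sqrt n)$.
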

In Lemma \ref{lem:Coupling} we has just proved in \eqref{eq:origin-giant} the requirement \eqref{eq:condition-for-lower}. The requirements in Theorem \ref{thm:longrange} are more restrictive then the assumption  $p\wedge \beta<1$ here, so the lower bounds in Theorem \ref{thm:longrange} follow from Proposition \ref{prop:lower}. In particular, \eqref{eq:cn2-lower} implies the lower bound in \eqref{eq:main-second-largest}, and after taking logarithm of both sides, \eqref{eq:cluster-lower} implies the lower bound in \eqref{eq:main-cluster-decay}.
\vskip1em

\begin{acks}
The work of JJ and JK has been partly supported through grant NWO 613.009.122.  The work of DM has been partially supported by grant Fondecyt grant 1220174 and by grant GrHyDy ANR-20-CE40-0002. We thank the careful reviewer for many detailed suggestions, and in particular for the elegant new proof of Claim \ref{claim:complement-star-conn}.
\end{acks}

\appendix
\section{Proofs of preliminary claims}
\begin{proof}[Proof of Claim \ref{claim:unique}]\phantomsection\label{sec:unique-decomp}
 Identify a path on $\Z^d_1$ with its vertex set.  Given the set $A$, we define an equivalence class $\leftrightarrow_{A,1}$ on the vertices of $A$, where $x\leftrightarrow_{A,1} y$ if and only if there is a $1$-connected path $\pi$ (of $\Z_1^d$) consisting of vertices of $A$ that connects $x$ and $y$. We then define the blocks $A_1, A_2, \dots, A_b$ as the equivalence classes of $\leftrightarrow_{A,1}$. 
 In other words, start from any vertex $x\in A$ and define its block as all vertices that $x$ is $1$-connected to using paths of only vertices of $A$ (and the edges of $\Z^d_1$), and then we iterate this over all $x\in A$, yielding the (different) blocks $A_1, A_2, \dots, A_b$.
 
Each $A_i$ is $1$-connected since every pair of vertices in $A_i$ is connected by a $1$-connected path by the definition of $\leftrightarrow_{A,1}$, i.e., $A_i$ is a block. Further, if $i\neq j$ then $\|A_i-A_j\|_1>1$ must hold, since otherwise there would be a $1$-connected path from some $x\in A_i$ to some $y\in A_j$, and that would contradict $x\not\leftrightarrow_{a,1}y$. Uniqueness of this decomposition follows because $\leftrightarrow_{A,1}$ is an equivalence relation.
 
 When the set $A$ is the vertex set of a component of $\CC$ of $\CG$, we show that the block graph $\CH_\CG((A_i)_{i\le b})$ is connected. Suppose otherwise. This means that there is a proper subset of blocks whose union is not connected to the union of all the other blocks. However, this contradicts that $\CC$ is a component of $\CG$.
\end{proof}
\begin{proof}[Proof of Claim \ref{claim:containment}]\phantomsection\label{proof:containment}
 We show that $\widetilde\partial_\mathrm{int}\bar B\subseteq \widetilde\partial_\mathrm{int}B$. We argue by contradiction. Assume that there exists $x\in\widetilde\partial_\mathrm{int}\bar B\setminus\widetilde\partial_\mathrm{int}B$. 
 Since $\widetilde\partial_\mathrm{int}\bar B\subseteq \bar B= B \cup (\cup_{H\in\mathfrak{H}_B}H)$ and $B$ is disjoint from $\bar B\setminus B$, there are two cases. Either $x\in B\setminus \widetilde\partial_\mathrm{int}B$ or $x\in \mathfrak (\cup_{H\in\mathfrak{H}_B}H)\setminus  \widetilde\partial_\mathrm{int}B$. 
 
 For the first case assume that  $x\in B\setminus \widetilde\partial_\mathrm{int}B$. Then all its $\Z^d_1$-neighboring vertices were also in $B$ by Definition \ref{def:boundary} of the interior boundary. Thus $x$ is surrounded by $B$, and hence $x\in \bar B$. Similarly,  the neighboring vertices are also in $\bar B$, contradicting that $x\in\widetilde\partial_\mathrm{int}\bar B$.
 
For the second case assume that  $x\in (\cup_{H\in\mathfrak{H}_B}H)$. Then $x$ was surrounded by $B$, but then also all its $\Z^d_1$-neighboring vertices were either a member of $B$ or surrounded by $B$, contradicting again that $x\in\widetilde\partial_\mathrm{int}\bar B$.

 We move on to part (ii).
 Assume that $\bar B_1\cap\bar B_2\neq \emptyset$, then there exists $x\in \bar B_1\cap\bar B_2$. Since $B_1$ and $B_2$ are $1$-disconnected, it is excluded that $x\in B_1\cap B_2$. Assume there exists some $x\in (\bar B_2\setminus B_2)\cap B_1$. Then, since  $x\in (\bar B_2\setminus B_2)$, $x$ is surrounded by $B_2$. Further, any vertex $y\in B_1$ must be surrounded by $B_2$, since $\|B_1-B_2\|_1\ge 2$ and $B_1$ itself is $1$-connected. Hence, $B_1\subseteq \bar B_2$. Further, vertices surrounded by $B_1$ are then also surrounded by $B_2$, so we obtain $\bar B_1\subseteq \bar B_2$. The argument when there exists some $x\in (\bar B_1\setminus B_1)\cap B_2$ follows analogously yielding that in that case $\bar B_2\subseteq \bar B_1$.
 Lastly, assume that there exists $x\in (\bar B_1\setminus B_1)\cap (\bar B_2\setminus B_2)$, i.e., $x$ is in the intersection of a hole of $B_2$ and a hole of $B_1$, in particular it is surrounded by both $B_1$ and $B_2$. Then there exists, for some $j\ge 1$, a (self-avoiding) path $\pi=(x,x_1,\ldots, x_j)$ on $\Z^d_1$ such that $x_{\ell}\in (\bar B_1\setminus B_1)\cap (\bar B_2\setminus B_2)$ for all $\ell\le j-1$ and then
  $x_j\in\widetilde\partial_\mathrm{int} B_1\cup\widetilde\partial_\mathrm{int}B_2$. That is, from $x$ we start a path $\pi$ of `hole' vertices until we hit one of the interior boundaries of sets $B_1$ or $B_2$. Assume w.l.o.g.\ that $x_j\in\widetilde \partial_\mathrm{int}B_1$. Since there were no vertices from $B_2$ on the path and $x$ is surrounded by $B_2$, it must follow that also $x_j$ is surrounded by $B_2$. Similar to the previous case, it follows that $\bar B_1\subseteq \bar B_2$.

 Part (iii) claims that  when $\bar B_1\cap\bar B_2 = \emptyset$, and initially $B_1,B_2$ are $1$-disconnected, then $\|\bar B_1-\bar B_2\|_1\ge 2$.
 By definition of  $\|\cdot\|_1$ between sets, we have
 \begin{equation}
  \|\bar B_1-\bar B_2\|_1 = \min_{x_1\in\bar B_1\setminus B_1, x_2\in\bar B_2\setminus B_2}\big\{\|B_1-B_2\|_1, \|\{x_1\}-B_2\|_1, \|B_1-\{x_2\}\|_1, \|x_1-x_2\|_1\big\}.\label{eq:distance-closures}
 \end{equation}
 Each $1$-connected path from $x_i\in \bar B_i\setminus B_i$ to any $y\notin \bar B_i$ must cross a vertex in $\widetilde\partial_\mathrm{int}\bar B_i$. Consequently,
 \[
  \|\{x_1\}-B_2\|_1\ge \|\{x_1\}-\widetilde\partial_\mathrm{int}\bar B_1\|_1 + \|\widetilde\partial_\mathrm{int}\bar B_1-B_2\|_1 \ge \|\{x_1\}-\widetilde\partial_\mathrm{int}\bar B_1\|_1 + \|\widetilde\partial_\mathrm{int} B_1-B_2\|_1 \ge 2,
 \]
 where the second inequality follows since $\widetilde\partial_\mathrm{int} \bar B_1\subseteq \widetilde\partial_\mathrm{int} B_1$ by part (i), and the third inequality since by 1-disconnectedness of $B_1$ and $B_2$ the second term on the right-hand side is at least two. The third and fourth term in the minimum in \eqref{eq:distance-closures} can be bounded similarly. It follows that $\bar B_1$ and $\bar B_2$ are $1$-disconnected.

 The fourth statement is immediate from \cite[Lemma 2.1]{deuschel1996surface} which states that the interior and exterior boundaries with respect to $\Z_1^d$ of any $\ast$-connected set are $\ast$-connected.
 
 We turn to the last statement, and show that for any hole $H$ of a block $B$, $H=\bar H$, i.e., that $H$ does not contain holes. This is true since $H$ was formed as a maximal $1$-connected subset of 
the vertices in $\Lambda_n\setminus B$ surrounded by $B$, see below \eqref{eq:closure} in Definition \ref{def:blocks}. So if there were a hole $J$ inside $H$, then $J$ must intersect $B$, which would then contradict the $1$-connectedness of $B$, since $\|J-\partial_{\mathrm{ext}}\bar H\|_1\ge 2$ as $H$ fully  surrounds $J$. The fact that the interior and exterior boundaries of a hole are $\ast$-connected follows now from Part~(iv).
\end{proof}
\begin{proof}[Proof of Claim \ref{claim:iso}]\phantomsection\label{proof:isoperimetry}
 The proof is inspired  by an argument by Deuschel and Pisztora \cite[Proof of (A.3)]{deuschel1996surface}. The inequalities with $(\star)$ in \eqref{eq:iso} follow by standard isoperimetric inequalities, but we will also derive them below.

 We will first show the bounds for $\partial_\mathrm{int}$ and $\widetilde\partial_\mathrm{int}$. At the end of the proof we adjust it to $\partial_\mathrm{ext}$ and $\widetilde\partial_\mathrm{ext}$. 
 We start by showing that there exists $\delta'>0$ such that $\partial_\mathrm{int}A\ge \delta'\widetilde\partial_\mathrm{int}A$ for all $A\subseteq \Lambda_n$ of size at most $3n/4$. 
 Fix such a set $A \subseteq\Lambda_n$.
 We recall an inequality related to the isoperimetric inequality by Loomis and Whitney \cite[Theorem 2]{loomis1943}.  For a set $A\subseteq\Lambda_n$, let $S_i:=\pi_i(A)$ denote the projection $\pi_i$ of $A$ onto the $i$-th coordinate hyperplane.
 That is, for a vertex with coordinates $x=(x_1, \dots, x_{i-1}, x_i, x_{i+1}, \dots, x_d)$ we define $\pi_i x := (x_1, \dots, x_{i-1}, 0, x_{i+1}, \dots, x_d)$.
 Then \cite[Theorem 2]{loomis1943} proves that
 \begin{equation}
  |A|^{d-1}\le \prod_{i\in[d]}|S_i|.\label{eq:loomis}
 \end{equation}
 Let $i_\star$ be the coordinate dimension that contains the largest projected set $S_{i_\star}$ (ties broken arbitrarily), so that as a result of \eqref{eq:loomis},
 \begin{equation}
  |S_{i_\star}|\ge |A|^{(d-1)/d}.\label{eq:large-proj}
 \end{equation} We abbreviate $S_\star=S_{i_\star}$, and write $\pi_{\star}$ for the the $i_\star$-th projection.
 For $s\in S_{\star}$ we define the pre-image of $s$ as
 \begin{equation}
  \pi_{\star}^{\uparrow}:=\{y \in A: \pi_{\star}(y)=s\}\nonumber%
 \end{equation}
  We now describe `fibers' of $A$, informally, where $A$ has a full $n^{1/d}$-length straight line segment on the $i_\star$th coordinate connecting two opposite faces of the box $\Lambda_n$.
 Formally, we call a vertex $s\in S_\star$ a \emph{projection of a fiber} or shortly a \emph{fiber} if there is no vertex in $\partial_{\mathrm{int}}A$ that projects to $s$ via $\pi_{\star}$, and define the set 
 \begin{equation}\label{eq:fibers}
  F:=\{s\in S_{\star}: \nexists y\in\partial_{\mathrm{int}} A\mbox{ with } \pi_{\star}(y)=s\}.
 \end{equation}
 The pre-image of any fiber  does not contain any vertex of $\partial_{\mathrm{int}} A$ within $\Lambda_n$, hence, it contains a full length-$n^{1/d}$ line $\CL_s$ connecting the two opposite faces of $\Lambda_n$, with $\pi_\star(\CL_s)= s$.
 This is because all vertices that share all coordinates with $s$ except the $i_{\star}$th coordinate, project to $s$ via $\pi_{\star}$, so $A$ must contain all of them (the possibility of $A$ containing none of $\CL_s$ is excluded by assuming $s\in S_{\star}$), otherwise there would be a boundary vertex of $A$ among them.
 Then the pre-image of any such fiber intersects the box-boundary $\widetilde\partial_{\mathrm{int}}\Lambda_n$ in exactly $2$ vertices, which then must be also boundary vertices of $A$ with respect to $\Z^d$:
 \[ |\pi_{\star}^{\uparrow}(s)\cap \widetilde\partial_{\mathrm{int}}\Lambda_n| = |\pi_{\star}^{\uparrow}(s) \cap \widetilde \partial_{\mathrm{int}} A| = 2, \qquad \forall s\in F.\]
By the definition in \eqref{eq:fibers}, the pre-image of vertices in $F$ are disjoint of $\partial_{\mathrm{int}} A$ (the interior boundary of $A$ with respect to $\Lambda_n$) by the definition in \eqref{eq:int-boundary}, see also the text below \eqref{eq:int-boundary}. By the definition \eqref{eq:fibers}, the pre-image of each vertex  $z\in S_\star\setminus F$ contains at least one vertex in $\partial_{\mathrm{int}}A$.
 A similar argument as the one for fibers shows that the pre-image of each vertex  $z\in S_\star\setminus F$ contains at least two vertices in $\widetilde \partial_{\mathrm{int}}A$. Namely, if $z$ is not a projection of a fiber, then the line segment $\CL_z\cap A$ must not equal $\CL_z$ and hence it contains at least one vertex pair $x, y$ so that $x\in A, y\in \Lambda_n\setminus A$. In this case $y\in \partial_{\mathrm{ext}}A$ and so $x\in \partial_{\mathrm{int}}A$. 
 Further, either there is a second such vertex pair, or if there is no second such vertex pair then $\CL_s\cap A$ must contain one vertex of $\CL_s\cap \widetilde \partial_{\mathrm{int}}\Lambda_n$, yielding that the pre-image of  $z$ contains at least two vertices of $\widetilde \partial_{\mathrm{int}}A$.
 Formally
 \begin{equation}\label{eq:min-boundary}
  |\pi_{\star}^{\uparrow}(z) \cap \partial_{\mathrm{int}} A| \ge 1 \quad \mbox{and} \quad |\pi_{\star}^{\uparrow}(z) \cap \widetilde \partial_{\mathrm{int}} A|\ge 2 \qquad \forall z\in S_\star\setminus F.
 \end{equation}
 Further, for any configuration of $A\cap \CL_z$, we see that the difference between the above two intersections is the number of vertices that  $A\cap \CL_s\cap \widetilde\partial_{\mathrm{int}}\Lambda_n$ contains. We obtain
 \begin{equation}
  |\pi_{\star}^{\uparrow}(z) \cap \widetilde \partial_{\mathrm{int}} A|-  |\pi_{\star}^{\uparrow}(z) \cap \partial_{\mathrm{int}} A| \le 2 \qquad \forall z\in S_\star\setminus F.\nonumber%
 \end{equation}
 We characterize $z\in S\setminus F$ according to this difference. For $j\in\{0,1,2\}$ we define 
 \begin{equation}
  (S_\star\setminus F)_j:=\big\{ z\in S_\star\setminus F:  |\pi_{\star}^{\uparrow}(z) \cap \widetilde \partial_{\mathrm{int}} A|-  |\pi_{\star}^{\uparrow}(z) \cap \partial_{\mathrm{int}} A| =j \big\}.\nonumber%
 \end{equation}
 Then we can count all vertices in $\widetilde \partial_\mathrm{int}A$ according to their projection via $\pi_\ast$, and obtain that
 \begin{equation}
  \begin{aligned}
   |\widetilde \partial_{\mathrm{int}} A| & = \sum_{s\in F}  |\pi_{\star}^{\uparrow}(s) \cap \widetilde \partial_{\mathrm{int}} A| +
   \sum_{j\in\{0,1,2\}}\sum_{z\in (S_\star\setminus F)_j}  |\pi_{\star}^{\uparrow}(z) \cap \widetilde \partial_{\mathrm{int}} A|
   \\
& = 2|F| +  \sum_{j\in\{0,1,2\}}\sum_{z\in (S\setminus F)_j}  |\pi_{\star}^{\uparrow}(z) \cap \widetilde \partial_{\mathrm{int}} A|
 \\
 &  = 2|F| +  \sum_{j\in\{0,1,2\}}\sum_{z\in (S_\star\setminus F)_j}  (|\pi_{\star}^{\uparrow}(z) \cap \partial_{\mathrm{int}} A|+j).
  \end{aligned}\nonumber%
 \end{equation}
 Now we consider the ratio of the boundaries, i.e.,
 \begin{equation}
  \begin{aligned}
   \frac{|\partial_{\mathrm{int}}A|}{|\widetilde\partial_{\mathrm{int}}A|}
    & =\frac{\sum_{j\in\{0,1,2\}}\sum_{z\in (S_\star\setminus F)_j} |\pi_{\star}^{\uparrow}(z) \cap  \partial_{\mathrm{int}} A|}{2|F| + |(S_\star\setminus F)_1| + 2|(S_\star\setminus F)_2| + \sum_{j\in\{0,1,2\}}\sum_{z\in (S_\star\setminus F)_j}|\pi_{\star}^{\uparrow}(z) \cap   \partial_{\mathrm{int}} A|}.
  \end{aligned}\nonumber%
 \end{equation}
 Taking the set sizes $|(S_\star\setminus F)_j|, |F|$ fixed, it is elementary to see that the ratio is increasing in the summands of the double sum, i.e., its minimal value is attained when all summands are minimal. Now we use that each of the summands is at least $1$ by \eqref{eq:min-boundary}, and obtain
 \begin{equation}\nonumber%
  \frac{|\partial_{\mathrm{int}}A|}{|\widetilde\partial_{\mathrm{int}}A|} \ge \frac{|S_\star\setminus F|}{2|F| + |(S_\star\setminus F)_1| + 2|(S_\star\setminus F)_2| + |S_\star\setminus F|}.
 \end{equation}
 We bound the denominator from above, i.e.,
 \begin{equation}\nonumber%
  \Big(|F|+|S_\star\setminus F|\Big) + \Big(|F| + |(S_\star\setminus F)_1| + |(S_\star\setminus F)_2|\Big) + |(S_\star\setminus F)_2| \le 3|S_\star|,
 \end{equation}
 to obtain
 \begin{equation}\label{eq:app-fiber-ratio}
\frac{|\partial_{\mathrm{int}}A|}{|\widetilde\partial_{\mathrm{int}}A|} \ge \frac{|S_\star\setminus F|}{3|S_\star|}.
 \end{equation}
 We focus on the ratio on the right-hand side. The idea here is that our two assumptions that $|A|<3n/4$ and that $S^\star$ is the largest projection set together prevents the relative ratio of fibers in $S_\star$ to be large. For each vertex $s\in F$, there are $n^{1/d}$-many vertices in $A$ that are projected onto it (namely $\CL_s$).
 Using also $|S_\star|\ge |A|^{(d-1)/d}$ by \eqref{eq:large-proj} and $n\ge(4/3)|A|$, we get
 \begin{equation*}
  |A|\ge |F|n^{1/d} = \frac{|F|}{|S_\star|}|S_\star|n^{1/d}\ge \frac{|F|}{|S_\star|}|A|^{(d-1)/d}n^{1/d}\ge (4/3)^{1/d}\frac{|F|}{|S_\star|}|A|.
 \end{equation*}
 After rearranging we obtain  $|F|\le (3/4)^{1/d}|S_\star|$, and using this in \eqref{eq:app-fiber-ratio}, we obtain that $$
  |\partial_\mathrm{int} A|\ge \big(1-(3/4)^{1/d}\big)|\widetilde\partial_\mathrm{int} A|/3,$$
  which finishes the proof of the first inequality in \eqref{eq:iso} with $\delta=(1-(3/4)^{1/d})/3$. The left-hand inequality with $(\star)$ in \eqref{eq:iso} is not even sharp, and follows immediately from \eqref{eq:large-proj}, since each projected vertex corresponds to at least two interior boundary vertices with respect to $\Z^d$, i.e., $|\widetilde\partial_\mathrm{int} A| \ge 2 |S_\star| \ge 2 |A|^{(d-1)/d}$.

 We turn to the inequality concerning  $\partial_\mathrm{ext}A$ and $\widetilde\partial_\mathrm{ext}A$ in \eqref{eq:iso}. The inequality with $(\star)$ in \eqref{eq:iso} holds for the same reason as for $\widetilde \partial_\mathrm{int}A$. Namely, each projected vertex corresponds to at least two exterior boundary vertices with respect to $\Z^d$, i.e., $|\widetilde\partial_\mathrm{ext} A| \ge 2 |S_\star| \ge 2 |A|^{(d-1)/d}$.

 To obtain a lower bound on the ratio $|\partial_\mathrm{ext}A|/|\widetilde\partial_\mathrm{ext}A|$, we use that each exterior boundary vertex is within distance one from an interior boundary vertex, which holds for both for $\partial$ and $\widetilde\partial$. Since each vertex has at most $2d$ vertices within distance one, it follows that
 \[
  |\partial_\mathrm{ext}A|\ge \frac{1}{2d}\cdot|\partial_\mathrm{int}A| \ge \frac{1}{2d}\cdot \frac{1-(3/4)^{1/d}}{3}\cdot|\widetilde\partial_\mathrm{int}A| \ge \frac{1}{2d}\cdot \frac{1-(3/4)^{1/d}}{3}\cdot \frac{1}{2d}\cdot|\widetilde\partial_\mathrm{ext}A|,
 \]
 and the proof is finished for $\delta=(2d)^{-2}\big(1-(3/4)^{1/d}\big)/3$.
\end{proof}

\begin{proof}[Proof of Lemma \ref{lemma:peierl}]\phantomsection\label{proof:peierl}
 We first show that there exists $c_\mathrm{pei}'>0$ such that for all $x\in\Z^d$ and $m\in\N$
 \begin{equation}
  |\{A\subseteq\Lambda_n:  A \ni x, |A|=m, A\mbox{ is $\ast$-connected}\}|                                   \le \exp(c_\mathrm{pei}'m).\label{eq:peierl-1}
 \end{equation}
 Let $A$ be in the set on the left-hand side. Since $A$ is $\ast$-connected, the induced subgraph $\Z^d_\infty[A]$ contains a spanning tree containing $x$, which can be associated to a walk on the spanning tree (for example, walking through the tree in depth-first order), visiting each vertex in $A$ at most twice. Since the degree of any vertex is $3^d-1$ in $\Z_\infty^d$, the walk has at most $3^d-1$ options at each step for its next vertex, and has length at most $2m$. This shows \eqref{eq:peierl-1}.
 
 For \eqref{eq:peierl-2}, we observe that each set $A$ without holes ($A=\bar A$) can be uniquely reconstructed from its  interior boundary $\widetilde\partial_{\mathrm{int}} A$ (a  vertex is in $\bar A\setminus \widetilde\partial_\mathrm{int}A$ iff it is surrounded by $\widetilde\partial_\mathrm{int}A$), which is $\ast$-connected \cite[Lemma 2.1]{deuschel1996surface}. 
 Since we assume $|\widetilde\partial_{\mathrm{int}} A|=m$, the isoperimetric inequality \eqref{eq:iso} ensures that $|A|\le C_1 m^{d/(d-1)}$ for some $C_1>0$ for all $m\in \N$.
 This interior boundary must either contain $x$ or surround $x$ as defined in Definition \ref{def:boundary}.

 We claim that there is a constant $C>0$ such that $\|x-\widetilde\partial_\mathrm{int} A\|_2\le Cm^{1/(d-1)}$ for all $x, A$ with $A\ni x$ and $A=\bar A$. Indeed, suppose otherwise. Then, on $\Z^d$, vertices in the Euclidean ball of radius $ Cm^{1/(d-1)} $ around $x$ would be contained fully in $A$ (without containing a vertex of $\widetilde\partial_\mathrm{int} A$). This would mean, for some dimension-dependent constant $c_d$,
 that $|A|\ge c_d (Cm^{1/(d-1)})^d$, which contradicts that $|A|\le  m^{d/(d-1)}$ by Claim \ref{claim:iso} when $C$ is chosen sufficiently large.

 Hence, we may find a vertex $y\in \widetilde\partial_{\mathrm{int}}A\cap \mathrm{Ball}(Cm^{1/(d-1)}, x)$  where the latter set denotes the Euclidean ball of radius $Cm^{1/(d-1)}$ around $x$. Then, since $\widetilde\partial_{\mathrm{int}}A$ is a $\ast$-connected set of size $m$, \eqref{eq:peierl-1} ensures that
 the number of possible sets $S$ that may form $\widetilde\partial_{\mathrm{int}}A$ is $\exp(c_{\mathrm{pei}}m)$.
 Summing over the possible choices of $y\in \mathrm{Ball}(Cm^{1/(d-1)},x)$, we arrive at
 \begin{equation}
  |\{A\in\CA:  A\ni x, A=\bar{A}, |\widetilde\partial_\mathrm{int}A|=m\}|\le c_d C^d m^{d/(d-1)}\exp(c_\mathrm{pei}' m).\nonumber%
 \end{equation}
 The result follows by absorbing the factor $c_dCm^{d/(d-1)}$ into the constant $c_\mathrm{pei}$.
\end{proof}


\end{document}